\newtheorem{theorem}{Theorem}
\newtheorem{corollary}{Corollary}
\newtheorem{lemma}{Lemma}
\newtheorem{proposition}{Proposition}
\theoremstyle{remark}
\newtheorem{remark}{Remark}
\theoremstyle{definition}
\newtheorem{definition}{Definition}
\def\R{\mathbb{R}}
\def\C{\mathbb{C}}
\def\Q{\mathbb{Q}}
\def\Z{\mathbb{Z}}
\def\N{\mathbb{N}}
\def\I{\mathbb{I}}
\def\cA{\mathcal{A}}
\def\cH{\mathcal{H}}
\def\cM{\mathcal{M}}
\def\cC{\mathcal{C}}
\def\cL{\mathcal{L}}
\def\cR{\mathcal{R}}
\def\cF{\mathcal{F}}
\def\cV{\mathcal{V}}
\newcommand{\cFN}{\cF_n^{\bm N}}
\newcommand{\ve}{\varepsilon}
\newcommand{\vv}[1]{{\mathbf{#1}}}
\newcommand{\rank}{\operatorname{rank}}
\newcommand{\codim}{\operatorname{codim}}
\newcommand{\bmtheta}{{\bm\theta}}
\newcommand{\GL}{\mathrm{GL}}
\newcommand{\ddd}{m}
\newcommand{\BBG}{{\bf M}}
\newcommand{\Rp}{\R^+}    
\newcommand{\ra}{R_{\alpha}}
\newcommand{\ba}{\beta_\alpha}
\newcommand{\al}{\alpha}
\renewcommand{\r}{\rho}
\begin{document}

\large

\title{\bf Systems of small linear forms and\\ Diophantine approximation on manifolds}

\author{V.~Beresnevich\footnote{Supported by EPSRC grant EP/J018260/1} \and V.~Bernik \and N.~Budarina}

\date{}%

\maketitle

\vspace*{-3ex}

\begin{abstract}
We develop the theory of Diophantine approximation for systems of simultaneously small linear forms, which coefficients are drawn from any given analytic non-degenerate manifolds. This setup originates from a problem of Sprind\v zuk from the 1970s on approximations to several real numbers by conjugate algebraic numbers. Our main result is a Khintchine type theorem, which convergence case is established without usual monotonicity constrains and the divergence case is proved for Hausdorff measures. The result encompasses several previous findings and, within the setup considered, gives the best possible improvement of a recent theorem of Aka, Breuillard, Rosenzweig and Saxc\'e on extremality.
\end{abstract}

\noindent{\small {\em Key words}: Diophantine approximation, Khintchine's theorem, simultaneously small linear forms, Hausdorff dimension, Mass Transference, regular systems and ubiquity}

\medskip

\noindent\emph{AMS Subject classification}: 11J83, 11J13, 11K60

\section{Introduction}

Diophantine approximation on manifolds dates back to a conjecture of Mahler \cite{mahler} from 1932 stating that for every $n\in\N$ and $\ve>0$ for almost every $x\in\R$ the inequality
\begin{equation}\label{vbA}
|P(x)|<H(P)^{-n-\ve}
\end{equation}
holds for finitely many polynomials $P=a_n x^n+\dots+a_1 x+a_0\in\Z[x]$ with $\deg P\le n$ only, where
$
H(P)=\max\{|a_i|:0\le i\le n\}
$
is the {\em height}\/ of $P$.
The conjecture was established by Sprind\v{z}uk in 1964, who also considered its $p$-adic and complex analogues \cite{spr1}.

More generally, given $\Psi:[0,+\infty)\to[0,+\infty)$, let $\mathcal{L}_{n}(\Psi)$ denote the set of $x\in\R$ such that the inequality
\begin{equation}\label{v4}
|P(x)|<\Psi(H(P))
\end{equation}
holds for infinitely many $P\in\Z[x]$, $\deg P\le n$.
Clearly, Sprind\v zuk's theorem simply means that $\mathcal{L}_{n}(h\mapsto h^{-n-\ve})$ is of Lebesgue measure zero for any $n\in\N$ and any $\ve>0$. Decades after Sprind\v zuk's proof, the following much more precise Khintchine type theorem was obtained as a results of \cite{beresnevich99}, \cite{beresnevich05} and \cite{bernik89}:

\bigskip

\noindent\textbf{Theorem A\,:} {\it
Let $n\in\N$, $\Psi:\N\to[0,+\infty)$ and $I\subset\R$ be any interval. Then
\begin{equation}\label{v3}
 \lambda_1(\mathcal{L}_{n}(\Psi)\cap I)=\left\{\begin{array}{cl}
 0 & \text{if ~$\sum_{h=1}^\infty h^{n-1}\Psi(h)<\infty$},\\[2ex]
 \lambda_1(I) & \text{if ~$\sum_{h=1}^\infty h^{n-1}\Psi(h)=\infty$ and $\Psi$ is monotonic}\,.
 \end{array} \right.
\end{equation}
}

\bigskip

\noindent Throughout $\lambda_m$ denotes Lebesgue measure over $\R^m$.
For $n=1$ \eqref{v3} is essentially Khintchine's classical result \cite{kh} on rational approximations to real numbers. Again, generalisations of \eqref{v3} were obtained for complex and $p$-adic variables, see \cite{bu-p-adic, bu3, bu6} and references within.

Clearly if a polynomial $P$ takes a small value at $x\in\R$, then one of the roots of $P$, say $\alpha$, must be close to $x$. There are various inequalities relating $|P(x)|$ and $|x-\alpha|$, see for instance \cite{spr1} and \cite{bernik89}. More generally, given a collection $x_1,\dots,x_m$ of real numbers, if the values $|P(x_j)|$ are simultaneously small, then every number $x_j$ from the collection is approximated by a root of $P$, say $\alpha_j$. In the case $P$ is irreducible over $\Q$, $\alpha_1,\dots,\alpha_m$ are conjugate. In this context a generalisation of Mahler's conjecture was established in \cite{bernik80} and reads as follows: {\it for any integers $n\ge m>1$ and any $\ve>0$ for almost all $(x_1,\dots,x_m)\in\R^m$ the inequality
\begin{equation}\label{vbB}
\max_{1\le j\le m}|P(x_j)|<H(P)^{-\frac{n+1-m}{m}-\ve}
\end{equation}
holds only for finitely many $P\in\Z[x]$ with $\deg P\le n$.}

\medskip

One of the goals of this paper is to obtain a complete analogue of Theorem~A for the setting of simultaneous approximations given by \eqref{vbB}. Although we shall consider the above problems in the much more general context of Diophantine approximation on manifolds, the result for polynomials is simpler, and we therefore present its full statement right away. To this end, define
\begin{equation}\label{sum}
S_{n,m}(\Psi)=\sum_{h=1}^{\infty}h^{n-m}\Psi^m(h)
\end{equation}
and let $\mathcal{L}_{n,m}(\Psi)$ be the set of $(x_1,\dots,x_m)\in \R^m$ such that
the inequality
 \begin{equation}\label{eq40+}
  \max_{1\le j\le m}|P(x_j)|<\Psi(H(P))
 \end{equation}
has infinitely many solutions $P\in\Z[x]$ with $\deg P\le n$.

\begin{theorem}\label{t1}
Let $n\ge m\ge 1$ be any integers, $\Psi:[0,+\infty)\to[0,+\infty)$ be any function and $B\subset\R^m$ be any ball. Then
\begin{equation}\label{v3+}
 \lambda_m\big(\mathcal{L}_{n,m}(\Psi)\cap B\big)=\left\{\begin{array}{cl}
 0 & \text{if ~$S_{n,m}(\Psi)<\infty$},\\[2ex]
 \lambda_m(B) & \text{if ~$S_{n,m}(\Psi)=\infty$ and $\Psi$ is monotonic}\,.
 \end{array} \right.
\end{equation}
\end{theorem}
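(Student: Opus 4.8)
The strategy is to treat the two cases separately, and in each case reduce the polynomial problem to a problem about linear forms by passing to the coefficient vector. Write $P = a_n x^n + \dots + a_0$, so that each condition $|P(x_j)| < \Psi(H(P))$ becomes a linear inequality in $\mathbf{a} = (a_0,\dots,a_n) \in \Z^{n+1}$ with coefficients drawn from the Veronese curve point $(1,x_j,\dots,x_j^n)$. Thus $\mathcal{L}_{n,m}(\Psi)$ is exactly the set where the system of $m$ linear forms determined by the $m$ Veronese points is simultaneously small infinitely often; this is the special case of the manifold problem where the manifold is the $m$-fold product of Veronese curves. So the plan is: prove the general manifold Khintchine theorem (whose statement appears later in the paper), and then verify that the product of Veronese curves is analytic and non-degenerate, which is classical. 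Everything below is how I would prove the general statement specialised to this case.

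\emph{Convergence case.} Here I want to show $\lambda_m(\mathcal{L}_{n,m}(\Psi) \cap B) = 0$ whenever $S_{n,m}(\Psi) < \infty$, with no monotonicity. The natural route is Borel--Cantelli: for each $H$, estimate the $\lambda_m$-measure of the set of $\mathbf{x} \in B$ for which some $P$ with $H(P) \asymp H$ satisfies \eqref{eq40+}. The expected bound for the measure contributed by a single $P$ is $\asymp (\Psi(H)/H)^m$ coming from $m$ independent one-dimensional shrinking-target estimates (the one-variable bound $\lambda_1\{x : |P(x)| < \Psi(H), P'(x) \text{ not too small}\} \ll \Psi(H)/H$ is standard), and there are $\ll H^{n+1}$ polynomials of height $H$. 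Multiplying gives $H^{n+1} (\Psi(H)/H)^m = H^{n+1-m}\Psi(H)^m$, and summing over $H$ reproduces $S_{n,m}(\Psi)$. The subtlety — and the main obstacle in this case — is the usual one in the theory of Diophantine approximation on manifolds: controlling polynomials $P$ for which $P'(x_j)$ is abnormally small (the ``derivative small'' / essential vs. inessential domains split), and handling reducible $P$ and polynomials sharing roots. This is exactly the kind of counting/geometry-of-numbers argument developed in \cite{beresnevich99,bernik80,beresnevich05}, and I would adapt it, doing the analysis coordinatewise in the $m$ variables and then combining; the product structure of the manifold keeps the combination clean.

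\emph{Divergence case.} Here $S_{n,m}(\Psi) = \infty$ and $\Psi$ is monotonic, and I want full measure. The standard machinery is ubiquity / regular systems: one shows that the collection of ``approximating points'' — namely, for an irreducible $P$ of degree $\le n$, the point $(\alpha_1,\dots,\alpha_m) \in \R^m$ where $\alpha_j$ is a root of $P$ near $x_j$ — forms a regular system (or a locally ubiquitous system) with the appropriate counting function, essentially because Theorem~A in the one-variable case already gives the optimal distribution of algebraic numbers, and conjugate roots of a single irreducible polynomial populate the product space densely enough. Concretely, using the relation between $|P(x_j)|$ and $|x_j - \alpha_j|$ (via $P'$), inequality \eqref{eq40+} is implied, for $\mathbf{x}$ near a root vector, by a simultaneous inequality $\max_j |x_j - \alpha_j| < \Psi(H)/|P'(\alpha_j)| $-type bound, and a standard variance/quasi-independence count of irreducible polynomials of height $\asymp H$ with all roots in $B$ and all $|P'(\alpha_j)| \asymp H$ supplies the ubiquity estimate. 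Then the divergence Borel--Cantelli lemma for ubiquitous systems (and, for the Hausdorff-measure refinement mentioned in the abstract, the Mass Transference Principle) yields full Lebesgue measure. The main obstacle here is establishing the ubiquity statement with the correct exponent uniformly over balls $B$ — i.e. proving there are $\gg H^{n+1-m}$ suitable root vectors at the right scale — which requires a genuine counting result for conjugate algebraic points in a box, the heart of the matter; once that is in hand, the passage to full measure is routine.
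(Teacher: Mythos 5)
Your overall architecture matches the paper's: Theorem~1 is indeed obtained there by specialising the manifold theorem (Theorem~2) to $\vv f_j(x_j)=(1,x_j,\dots,x_j^n)$, the convergence half is a Borel--Cantelli argument organised around an essential/inessential domain dichotomy, and the divergence half goes through local ubiquity of root vectors. So the reduction and the skeleton are right. But as a proof your plan has a genuine gap in the convergence case, precisely at the point you flag as ``the main obstacle'': the polynomials $P$ for which some $P'(x_j)$ is abnormally small, and the regimes where $\Psi(H)$ is far below the critical size $H^{-(n+1-m)/m}$. You propose to ``adapt'' the classical counting arguments of \cite{beresnevich99,bernik80,beresnevich05}, but those arguments are what forced the monotonicity hypothesis in all prior simultaneous results (\cite{borbat,bu4,bu5}); they do not close the non-monotonic case for $m\ge 2$. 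The paper's actual engine is different: it first proves an auxiliary exponent theorem (Theorem~\ref{thm-2}), allowing independent rates $v_j, v_j'$ on the values \emph{and} the derivatives subject only to $\sum v_j+\sum v_j'>n+1-2m$, and this is established via the quantitative non-divergence measure estimate of Kleinbock--Margulis in the form of Theorem~5.2 of \cite{BerAnn} (Propositions~\ref{p:1}--\ref{p:3}), applied to a carefully chosen $(n+1)\times(n+1)$ matrix built from the $\vv f_j$ and their derivatives. Every hard sub-case of the convergence argument -- two small derivatives, very small $\Psi$, and the final inessential-domain count after differencing two polynomials with the same leading coefficient -- is killed by an appeal to this theorem with a suitable choice of exponents. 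Without that input (or an equivalent substitute) your convergence argument does not go through.

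A secondary, smaller issue is in the divergence half. You make the ubiquity property rest on a counting result for $m$-tuples of conjugate roots of \emph{irreducible} polynomials in a box, which you identify as the heart of the matter but do not supply; note also that the required density is $\gg H^{n+1}$ root vectors per unit volume at scale $H^{-(n+1)/m}$, not $H^{n+1-m}$. The paper avoids this count entirely: it produces, for every $\vv x$ outside a small exceptional set, a single $F\in\cF$ via Dirichlet's pigeonhole with $|F_j(x_j)|\ll H^{-(n+1-m)/m}$, then uses the \emph{same} non-divergence estimate (Proposition~\ref{p:3}) to guarantee $|F_j'(x_j)|\gg H$ off a set of small measure, and extracts a nearby zero of each $F_j$ by the intermediate value theorem. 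No irreducibility or root-counting is needed. This is a cleaner route to ubiquity and is worth adopting; combined with the Hausdorff-measure ubiquity lemma of \cite{BDV06} it also yields the stronger Theorem~\ref{t2+}.
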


\bigskip

\begin{remark}
The above theorem is not totally new. Indeed, the case of monotonic $\Psi$ was well investigated. In the case of $m=2$ the convergence case of the above result was previously obtained in \cite{borbat} under the assumption that $\Psi$ is monotonic. The analogue of Theorem~\ref{t1} for monotonic $\Psi$, for $m=3$ with $(x_1,x_2,x_3)\in\R\times\C\times\Q_p$ was obtained in \cite{bu4,bu5} and for systems with arbitrary number of real, complex and $p$-adic variables in \cite{bu+1,bu+2}. The main advances of this paper concern two main directions: establishing the convergence case for non-monotonic $\Psi$ and extending results to non-degenerate manifolds.
\end{remark}

\subsection{Generic systems of small linear forms}

In what follows $W(m,n;\Psi)$ will be the set of $m\times (n+1)$ real matrices $Y$ such that the system
\begin{equation}\label{v1+}
|Y_j\vv a|<\Psi(|\vv a|)\qquad(1\le j\le m)
\end{equation}
holds for infinitely many columns $\vv a=(a_0,\dots,a_n)^t\in\Z^{n+1}$,
where $Y_j$ denotes the $j$th row of $Y$ and $|\vv a|=\max_{0\le i\le n}|a_i|$.
Taking $Y_j=(1,x_j,\dots,x_j^n)$ transforms \eqref{v1+} into \eqref{eq40+}. Thus, \eqref{v1+} provides a natural framework for investigating the problems for polynomials discussed above.

Using a standard pigeonhole argument it is readily shown that for any $Y$ as above there is a $C>0$ such that $Y\in W(m,n;\Psi)$ for $\Psi(h)=C h^{-\frac{n+1-m}{m}}$, see \cite[Lemma~3]{D93}. The matrix $Y$ will be called {\em extremal}\/ if $Y\not\in W(m,n;\Psi_\tau)$ whenever $\Psi_\tau(h)=h^{-\tau}$ with $\tau>\frac{n+1-m}{m}$.
Using the Borel-Cantelli Lemma one can easily show that almost all $m\times (n+1)$ real matrices $Y$ are extremal.
The general theory for $W(m,n;\Psi)$ was initiated by Dickinson in \cite{D93} who found the Hausdorff dimension of this set. Her result was subsequently improved upon by Hussain and Levesley \cite{HL13}. A slightly simplified version of their main finding for the case $m\le n$ is now given.

\medskip

\noindent\textbf{Theorem B\,:} {\it Let $n\ge m\ge1$ be integers, $mn<s\le m(n+1)$ and $\Psi:\N\to[0,+\infty)$ be monotonic. Then
\begin{equation}\label{vb21}
\begin{array}[b]{l}
  \cH^s\big(W(m,n;\Psi)\cap\I^{m\times (n+1)}\big)=\\[1ex]
\hspace*{15ex}  =\left\{\begin{array}{cl}
  0&\text{if }\sum\limits_{r=1}^\infty \Psi(r)^{s-nm}\,r^{(n+1)m-s}<\infty\,,\\[2.5ex]
  \cH^s\big(\I^{m\times (n+1)}\big)&\text{if }\sum\limits_{r=1}^\infty \Psi(r)^{s-nm}\,r^{(n+1)m-s}=\infty\,.
         \end{array}
  \right.
\end{array}
\end{equation}
}

\medskip

\noindent Here $\I^{m\times(n+1)}$ is the set of $m\times(n+1)$ matrices $Y$ with entries restricted to $\I=[-\tfrac12,\tfrac12]$ and $\cH^s$ is the $s$-dimensional Hausdorff measure (see \S\ref{sec4} for further details). The case $s=m(n+1)$ of the above theorem corresponds to Lebesgue measure. Hence the following

\medskip

\noindent\textbf{Corollary\,C\,:} {\it Let $n,m$ and $\Psi$ be as in Theorem~B. Then
\begin{equation}\label{vb21+}
\begin{array}[b]{l}
  \lambda_{m(n+1)}\big(W(m,n;\Psi)\cap\I^{m\times (n+1)}\big)=\\[1ex]
\hspace*{15ex}
=\left\{\begin{array}{cl}
  0&\text{if }\sum\limits_{r=1}^\infty \Psi(r)^{m}<\infty\,,\\[2.5ex]
  1&\text{if }\sum\limits_{r=1}^\infty \Psi(r)^{m}=\infty\text{ and $\Psi$ is monotonic}\,.
         \end{array}
  \right.
\end{array}
\end{equation}
}

In should be noted that the monotonicity of $\Psi$ is not needed in the convergence case of Theorem~B and was later removed from the divergence case -- see \cite{HK13}.

\begin{remark}
The framework of Diophantine approximation given by \eqref{v1+} is different from the classical setting of the Khintchine-Groshev theorem, where each inequality is additionally reduced modulo $\Z$. In the latter case the torus geometry simplifies things a lot, see \cite{BBDV09, BV10,Dodson-survey}.
It is worth mentioning that the framework given by \eqref{v1+} has recently become of interest in applications in electronics, see for example \cite[Appendix~B]{OE15} and \cite[Appendix~B]{MHMK}. Also the theory for manifolds, that will shortly be discussed, plays an important role in backing some breakthrough discoveries on the degrees of freedom of Gaussian Interference Channels using real alignment -- see \cite{IA2,IA1}.
\end{remark}

The key goal of this paper is to develop the theory where every row $Y_j$ within \eqref{v1+} is restricted to a given analytic non-degenerate submanifold $\cM_j$. Our main result is a Khintchine type theorem, which convergence case is established without usual monotonicity constrains and the divergence case is proved for Hausdorff measures.

The theory for manifolds has been flourishing following the landmark work \cite{Kleinbock-Margulis-98:MR1652916} of Kleinbock and Margulis, who established the extremality of almost all rows/columns lying on any non-degenerate submanifold of $\R^n$. In particular, we have a Khintchine-Groshev type theory for rows/columns \cite{Ber02,BBKM,Bernik-Kleinbock-Margulis-01:MR1829381,bu1} and the theory of extremality for matrices \cite{ABRS1, ABRS2, BKM, KMW}. Relevant to the goals of this paper Aka, Breuillard, Rosenzweig and Saxc\'e \cite{ABRS2} establish that any analytic submanifold of $m\times(n+1)$ matrices is extremal is the sense defined just before Theorem~B above if and only if it is not contained in any so-called {\em constraining pencil}. The manifolds of matrices that we consider in this paper form a subclass of the manifolds considered in \cite{ABRS2}. Within this subclass our main result gives the best possible improvement of \cite{ABRS2}. Obtaining a Khintchine type result for more general submanifolds of matrices remains an interesting open problem for both convergence and divergence even for monotonic approximation functions $\Psi$.

\subsection{Main results}

Let $m\in\N$ and for $j=1,\dots,m$\, let
$$
\vv f_j=(f_{j,0},\dots,f_{j,n}):U_j\to\R^{n+1},\qquad\text{where $U_j\subset\R^{d_j}$ is an open ball.}
$$
Further, define
$$
U=U_1\times\dots\times U_m\subset\R^{d}\,,\qquad\text{where}\qquad d=d_1+\ldots+d_m.
$$
For each $(n+1)$-tuple $(a_0,\dots,a_n)\in\Z^{n+1}\setminus\{\vv0\}$, define the map
$$
F:U\to\R^m
$$
by setting
$$
F(\vv x_1,\dots,\vv x_m)=\left(\begin{array}{c}
F_1(\vv x_1)\\
\vdots\\
F_m(\vv x_m)
                               \end{array}
\right),
$$
where $F_j:U_j\to\R$ is given by
\begin{equation}\label{F_j}
F_j(\vv x_j)=\sum_{i=0}^na_if_{j,i}(\vv x_j)\,.
\end{equation}
Thus $F(\vv x_1,\dots,\vv x_m)$ is the product of the $m\times (n+1)$ matrix
$$
Y=Y(\vv x_1,\dots,\vv x_m):=\left(\begin{array}{ccc}
        f_{1,0}(\vv x_1) & \dots & f_{1,n}(\vv x_1) \\[0.5ex]
        \vdots &    & \vdots \\[0.5ex]
        f_{m,0}(\vv x_m) & \dots & f_{m,n}(\vv x_m)
      \end{array}
\right)
$$
and the column $\vv a=(a_0,\dots,a_n)^t$.
Throughout this paper $\cF=\cF(\vv f_1,\dots,\vv f_m)$ will denote the collection of all the maps $F$ as just defined with the coefficients $(a_0,\dots,a_n)$ ranging over all non-zero integer points. For a given $F\in\cF$ we will denote its defining integer coefficients by $a_0(F),\dots,a_n(F)$, or, when there is no risk of confusion, simply by $a_0,\dots,a_n$.
Finally, given $F\in\cF$, define the height of $F$ as
$$
H(F)=|\vv a|:=\max_{0\le j\le n} |a_j(F)|\,,
$$
The goal of this paper is to investigate the set $\cL(\cF,\Psi)$ consisting of point $(\vv x_1,\dots,\vv x_m)\in U$ such that
 \begin{equation}
  \label{eq40}
  \max_{1\le j\le m}|F_j(\vv x_j)|<\Psi(H(F))
 \end{equation}
holds for infinitely many $F\in\cF$, where $\Psi:\N\to\R^{+}$ is a given function.
The following theorem represents our main result.

\begin{theorem}\label{t2}
Let $n\ge m\ge1$ be integers.
Let $U=U_1\times\dots\times U_m$, $\vv f_1,\dots,\vv f_m$, $\cF=\cF(\vv f_1,\dots,\vv f_m)$, $\Psi$ and $\cL(\cF,\Psi)$ be as above. Suppose that for each $j=1,\dots,m$ the coordinate functions $f_{j,0},\dots,f_{j,n}$ of the map $\vv f_j$ are analytic and linearly independent over $\R$.  Then
$$
  \lambda_m(\cL(\cF,\Psi))=\left\{\begin{array}{cl}
                           0 & \text{if ~$S_{n,m}(\Psi)<\infty$}\,, \\[1ex]
                           \lambda_m(U) & \text{if ~$S_{n,m}(\Psi)=\infty$ and $\Psi$ is monotonic.}
                         \end{array}
  \right.
$$
\end{theorem}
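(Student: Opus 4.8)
## Proof Strategy for Theorem~\ref{t2}

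\textbf{Overall plan.} The plan is to treat the convergence and divergence cases by separate machinery, reducing both to known tools in metric Diophantine approximation. For the convergence half, I would set up a Borel--Cantelli argument: for each height $h$ and each admissible integer vector $\vv a$ with $|\vv a| = h$, estimate the $\lambda_m$-measure of the set of $(\vv x_1,\dots,\vv x_m)\in U$ satisfying $\max_j |F_j(\vv x_j)| < \Psi(h)$. Because $F$ factorises as a product over the $m$ blocks, this set is itself a product $E_1(\vv a)\times\dots\times E_m(\vv a)$, where $E_j(\vv a) = \{\vv x_j\in U_j : |\sum_i a_i f_{j,i}(\vv x_j)| < \Psi(h)\}$. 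Thus $\lambda_m(E_1\times\dots\times E_m) = \prod_j \lambda_{d_j}(E_j(\vv a))$, and the whole problem decouples into $m$ \emph{single-block} estimates. For each block I would invoke the analytic non-degeneracy (guaranteed by linear independence of analytic $f_{j,0},\dots,f_{j,n}$) together with the standard ``Kleinbock--Margulis'' style estimate — or more directly the one-variable results behind Theorem~A, suitably packaged — to show $\lambda_{d_j}(E_j(\vv a)) \ll \Psi(h)/\max\{|\text{some derivative combination}|\}$, uniformly away from a small exceptional set. Summing over $|\vv a|=h$ (there are $\asymp h^n$ such vectors, with the dominant contribution from those where the linear form $\sum a_i f_{j,i}$ genuinely has size $\asymp h$) and then over $h$, the total measure is controlled by $\sum_h h^{n-m}\Psi^m(h) = S_{n,m}(\Psi) < \infty$, so Borel--Cantelli gives $\lambda_m(\cL(\cF,\Psi)) = 0$. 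The delicate point here is the bookkeeping: one must split the $\vv a$'s according to how the linear form behaves on each block and ensure the exceptional sets (where non-degeneracy degenerates, e.g. near common zeros of several $f_{j,i}$) contribute negligibly — this is where analyticity, rather than mere smoothness, is essential, since it forbids accumulation of zeros.

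\textbf{Divergence case.} For the divergence half with monotonic $\Psi$, I would use the framework of \emph{regular systems} or \emph{ubiquity}. The natural resonant sets are the ``surfaces'' $\{(\vv x_1,\dots,\vv x_m) : F_j(\vv x_j) = 0 \text{ for all } j\}$, or perturbations thereof — more precisely, for each $\vv a$ one gets a point (or submanifold) where each $F_j$ vanishes, and one shows these points form a regular system with the appropriate counting function. Concretely, one shows that for a ball $B\subset U$ and a large parameter $T$, the number of $F\in\cF$ with $H(F)\le T$ whose zero set meets $B$ is $\asymp T^{n+1-m}$ (after accounting for the $m$ independent vanishing conditions reducing the effective count), and that these zero sets are suitably ``spread out'' in $B$ at the scale dictated by $\Psi$. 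Then the standard divergence counterpart of the Borel--Cantelli lemma for ubiquitous systems, combined with the divergence of $S_{n,m}(\Psi)$, yields $\lambda_m(\cL(\cF,\Psi)\cap B) = \lambda_m(B)$ for every ball, hence full measure in $U$. Monotonicity of $\Psi$ is used, as usual, to pass from the sum condition to a usable density/spacing statement along a sequence of dyadic height ranges. Establishing the lower bound $\asymp T^{n+1-m}$ on the count of zero sets through a given ball — i.e.\ that non-degeneracy forces enough integer combinations $\sum a_i f_{j,i}$ to have a root in each $U_j$ with the right frequency — is the technical core; for polynomials $f_{j,i}(x) = x^i$ this is classical (counting algebraic numbers), and in general one transfers it via the non-degeneracy of $\vv f_j$ and a change of variables.

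\textbf{Main obstacle.} I expect the principal difficulty to lie in the \emph{uniform single-block measure estimate} in the convergence case: controlling $\lambda_{d_j}(\{\vv x_j\in U_j : |\sum_i a_i f_{j,i}(\vv x_j)| < \delta\})$ by roughly $\delta/h$ for the generic $\vv a$ with $|\vv a|=h$. When $d_j = 1$ this is essentially the linearisation estimate underlying Theorem~A and is well understood. For $d_j > 1$ one needs the full strength of non-degeneracy: the measure of a sublevel set $\{|g| < \delta\}$ of a function $g$ in the linear span of $f_{j,0},\dots,f_{j,n}$ is small precisely because $g$ cannot be small on a large set without being identically small, and the quantitative rate depends on keeping some derivative of $g$ bounded below on most of $U_j$ — uniformly over the $(n+1)$-dimensional family of such $g$. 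This uniformity over an unbounded coefficient range (normalised by $|\vv a|$) is exactly the phenomenon handled by the $(C,\alpha)$-good functions / $(0,\eta)$-non-planar technology of Kleinbock--Margulis and its descendants, and I would import it as a black box, checking only that analytic linearly-independent coordinate functions satisfy the required hypotheses on a suitable subball. A secondary obstacle is the divergence-case counting lower bound; here the cleanest route is probably to fibre over one block at a time and reduce to the known one-dimensional regular-system statement for the $\vv f_j$'s, but making the product structure interact correctly with the ubiquity framework — so that the joint zero sets, not just the individual ones, are ubiquitous — will require care.
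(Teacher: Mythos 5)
Your divergence-case strategy (joint zero sets of the $F_j$ as resonant points for a ubiquity/regular-systems argument, reduced to one variable per block by fibering) is essentially the route the paper takes, so that half of the plan is sound in outline. The convergence case, however, contains a genuine gap.

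The gap is in the claimed single-block estimate $\lambda_{d_j}(E_j(\vv a))\ll\Psi(h)/h$ "for the generic $\vv a$". For a non-degenerate analytic $\vv f_j$ the only bound valid uniformly over all $\vv a$ with $|\vv a|=h$ is of the form $\lambda_{d_j}(E_j(\vv a))\ll(\Psi(h)/h)^{1/n}$, coming from the fact that \emph{some} derivative of $F_j$ up to order $n$ is $\gg h$ (Lemma~\ref{lem3}/Lemma~\ref{lem4} in the paper); the linear bound $\Psi(h)/h$ requires $|F_j'|\gg h$ on the set in question, and for $m<n$ the vectors $\vv a$ for which $|F_j'(x_j)|$ is small at the relevant points are not an exceptional family that can be discarded by counting — for a fixed $x$ there are genuinely many such $F$, and summing the weaker bound $(\Psi(h)/h)^{m/n}$ over the $\asymp h^n$ vectors does not converge under $S_{n,m}(\Psi)<\infty$. (This is also not what the Kleinbock--Margulis $(C,\alpha)$-good machinery provides: it bounds the measure of the set where \emph{some} nonzero integer vector solves a system, not a per-$\vv a$ sublevel measure that can be summed in a Borel--Cantelli argument.) Note that the case $m=n$ is exactly the case where your argument does work, because there the square matrix built from the $f_{j,i}$ and one derivative is nonsingular, forcing $|F_j'(x_j)|\gg H(F)$ automatically; this is Proposition~\ref{p:4}.

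What is missing for $m<n$ is a mechanism to handle small derivatives, and the paper needs two distinct ones. First, an overlap/separation ("essential vs.\ inessential domain") argument in the style of Sprind\v zuk--Bernik: one fixes the top $n-m+1$ coefficients of $F$, shows that for two distinct such $F,T$ the inflated neighbourhoods $\tilde\sigma(F),\tilde\sigma(T)$ of their solution sets are disjoint (otherwise $R=F-T$, an integer combination of only $f_{j,0},\dots,f_{j,m-1}$, would be simultaneously small at a common point, contradicting \eqref{vb12+++}), so the total measure over each coefficient class is $O(1)$ and the count of classes supplies the factor $h^{n-m}$ (Propositions~\ref{p:5} and~\ref{p:9}). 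Second, for the regimes where a derivative is \emph{very} small or $\Psi(h)$ is much smaller than $h^{-(n-m)/m}$, one needs a separate zero-measure statement for systems with prescribed exponents on $F_j$ and $F_j'$ (Theorem~\ref{thm-2}), which is where the quantitative non-divergence estimates actually enter, applied to the augmented matrix of the $\vv f_j$ and their derivatives rather than blockwise. Without these two ingredients the convergence half — in particular for non-monotonic $\Psi$, which is the main point of the theorem — does not go through.
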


\medskip

\begin{remark}
Note that taking $\vv f_j(x_j)=(1,x_j,x_j^2,\dots,x_j^n)$ for $j=1,\dots,m$ gives Theorem~\ref{t1}.
Furthermore, it is easy to see that both Theorem~A and Corollary~C and the divergence case of Theorem~B are the special cases of Theorem~\ref{t2}.
\end{remark}

\

The function $\Psi$ that governs the approximations in \eqref{eq40} can be fairly erratic even if it is monotonic. Before understanding the case of general $\Psi$ we shall look into the easier case when $\Psi(h)$ is of the form $h^{-v}$ for some positive parameter $v$. Apparently, this particular case holds the key to resolving the general case. We shall prove the following result which will get us half-way through to establishing the convergence case of Theorem~\ref{t2}, but is also of independent interest. Note that it allows different approximation `rates' in each inequality.

\begin{theorem}\label{thm-2}
Let $n\ge m\ge1$ be integers.
Let $U=U_1\times\dots\times U_m$, $\vv f_1,\dots,\vv f_m$ and $\cF=\cF(\vv f_1,\dots,\vv f_m)$ be as above. Suppose that for each $j=1,\dots,d$ the coordinate functions $f_{j,0},\dots,f_{j,n}$ of the map $\vv f_j$ are analytic and linearly independent over $\R$.  Further, let $v_1,\dots,v_m>0$ and $v_1',\dots,v_m'\ge-1$ be such that
\begin{equation}\label{ve}
v_1+\dots+v_m+v'_1+\dots+v'_m>n+1-2m.
\end{equation}
Then for any constants $c_1,\dots,c_m,c'_1,\dots,c'_m$ for almost every $(\vv x_1,\dots,\vv x_m)\in U$ the system of inequalities
\begin{equation}\label{vb0}
|F_j(\vv x_j)|<c_jH(F)^{-v_j},\qquad |F_j'(\vv x_j)|<c'_jH(F)^{-v_j'}\qquad (1\le j\le m)
\end{equation}
has only finitely many solutions $F\in\cF$.
\end{theorem}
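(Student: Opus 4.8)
\medskip
\noindent\emph{Proof strategy.}
The plan is to run the convergence Borel--Cantelli lemma. First I would localise: cover each $U_j$ by finitely many closed balls lying in $U_j$, fix one such ball $B_j$ for every $j$, and set $B=B_1\times\dots\times B_m$; it is enough to treat $B$. For $\vv a=(a_0,\dots,a_n)\in\Z^{n+1}\setminus\{\vv 0\}$ put $H:=|\vv a|=H(F)$ and let $\Sigma(\vv a)\subset B$ be the set of $(\vv x_1,\dots,\vv x_m)$ for which \eqref{vb0} holds. Since each $F_j$ depends on $\vv x_j$ only, this set factorises,
\[
\Sigma(\vv a)=\Sigma_1(\vv a)\times\dots\times\Sigma_m(\vv a),\qquad
\Sigma_j(\vv a)=\bigl\{\vv x_j\in B_j:\ |F_j(\vv x_j)|<c_jH^{-v_j},\ |F_j'(\vv x_j)|<c_j'H^{-v_j'}\bigr\},
\]
so that $\lambda_d\bigl(\Sigma(\vv a)\bigr)=\prod_{j=1}^m\lambda_{d_j}\bigl(\Sigma_j(\vv a)\bigr)$. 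By Borel--Cantelli it suffices to prove $\sum_{\vv a\neq\vv 0}\prod_{j=1}^m\lambda_{d_j}(\Sigma_j(\vv a))<\infty$, and, grouping the $\vv a$ into dyadic blocks $H\le|\vv a|<2H$ (each carrying $\asymp H^{n+1}$ lattice points), it is enough to bound the contribution of a single block by $H^{-\kappa}$ for some fixed $\kappa>0$.

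\medskip
The analytic hypotheses enter when estimating $\lambda_{d_j}(\Sigma_j(\vv a))$ uniformly in $\vv a$. Analyticity propagates the linear independence of $f_{j,0},\dots,f_{j,n}$ from $U_j$ to the small ball $B_j$, so $\vv a\mapsto F_j|_{B_j}$ is injective and $\|F_j\|_{B_j}\asymp|\vv a|$ with constants depending only on $\vv f_j,B_j$; combining this with compactness of $\{|\vv a|=1\}$ gives a non-degeneracy order $\ell_j$, depending only on $\vv f_j$, such that after a further finite subdivision of $B_j$, for every $\vv a$ some derivative of $F_j$ of order $\le\ell_j$ exceeds a fixed multiple of $|\vv a|$ on each subpiece. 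With this in hand $\Sigma_j(\vv a)$ is governed by two interacting effects: $|F_j(\vv x_j)|<c_jH^{-v_j}$ confines $\vv x_j$ to a set of measure $\asymp H^{-(v_j+1)}$ near a simple zero of $F_j$, and to a smaller set near a multiple zero; while $|F_j'(\vv x_j)|<c_j'H^{-v_j'}$ is either restrictive in its own right (when $v_j'>-1$) or else --- when $v_j'=-1$, where it is asymptotically automatic --- takes effect through the scarcity of the forms $F_j$ having a near-multiple zero in $B_j$, which are precisely the forms for which the first inequality alone is insufficiently restrictive. Extracting the combined fibrewise saving of order $H^{-(v_j+v_j'+2)}$ uniformly in $\vv a$ is exactly what Sprind\v zuk's method of essential and inessential domains does, in the refinement due to Bernik treating the pair $|F_j|,\,|F_j'|$ simultaneously: subdivide $B_j$ adaptively by the dyadic size of the dominant derivative, apply iteratively the elementary estimate $\lambda_1\{|g|<\delta\}\le 2\delta/\inf|g'|$, and separate essential from inessential domains so as not to overcount near-degenerate forms.

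\medskip
Inserting the fibrewise bounds into the product and summing over a dyadic block, one is left with an estimate of the shape
\[
\sum_{H\le|\vv a|<2H}\ \prod_{j=1}^m\lambda_{d_j}\bigl(\Sigma_j(\vv a)\bigr)\ \ll_\ve\ H^{\,n+1}\cdot\prod_{j=1}^m H^{-(v_j+1)}\cdot\prod_{j=1}^m H^{-(v_j'+1)}\cdot H^{\ve}\ =\ H^{\,n+1-\sum_j(v_j+v_j')-2m+\ve}
\]
for every $\ve>0$. Here \eqref{ve} enters sharply: it says exactly that $n+1-\sum_j(v_j+v_j')-2m<0$, so choosing $\ve$ smaller than $\bigl|n+1-\sum_j(v_j+v_j')-2m\bigr|$ makes the exponent negative, the series over dyadic $H$ converges, and Borel--Cantelli gives the theorem. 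If some $d_j>1$, one precedes all of this by foliating $B_j$ into analytic arcs on which $\vv f_j$ restricts to a non-degenerate curve and applying Fubini, reducing to the one-variable estimates; this is routine but notationally heavy.

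\medskip
The main obstacle is the uniform fibrewise estimate of the second paragraph: transferring Sprind\v zuk's essential/inessential-domain technique from polynomials to an arbitrary analytic $\vv f_j$ while keeping all implied constants independent of $\vv a$ --- which is where the injectivity of $\vv a\mapsto F_j|_{B_j}$ and the finite non-degeneracy order are used --- and arranging the adaptive subdivision finely enough that the exponents add up to precisely the threshold \eqref{ve} rather than to something weaker. A secondary delicate point is the regime of small positive $v_j'$, where the derivative inequality is neither vacuous nor strong, and one must still extract from it the full power saving in the count of near-degenerate forms.
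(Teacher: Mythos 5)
Your proposal takes a genuinely different route from the paper, and as written it has a gap at the decisive step. The paper does not sum per-form measures at all: it forms the $(n+1)\times(n+1)$ matrix $G(\vv x)$ built from the maps $\vv f_j$ and their derivatives (equation \eqref{G}), and applies the quantitative non-divergence estimate of Kleinbock--Margulis in the form of Theorem~5.2 of \cite{BerAnn} (Proposition~\ref{p:1} here, made effective in Propositions~\ref{p:2} and~\ref{p:3}). That estimate bounds the measure of the set of $\vv x$ admitting \emph{some} nonzero integer solution at each dyadic level $t$ by $\ll 2^{-\ve\alpha t/(n+1)}$, and Borel--Cantelli is applied to these $O(\log H)$-many sets rather than to the $\asymp H^{n+1}$ individual forms. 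A separate, purely linear-algebraic case (Cramer's rule via Lemma~\ref{lem-3}) handles the regime where enough of the derivative conditions are non-vacuous, namely $m+\phi\ge n+1$.

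The gap in your version is the displayed per-block bound, which presupposes $\lambda_{d_j}(\Sigma_j(\vv a))\ll H^{-(v_j+1)}\cdot H^{-(v_j'+1)+\ve}$ for \emph{each individual} $\vv a$. This is false whenever $F_j$ has a near-multiple zero in $B_j$: if $F_j(x)\approx H(x-x_0)^2$ and $v_j'=-1$ (so the derivative condition is vacuous), then $\Sigma_j(\vv a)$ has measure $\asymp H^{-(v_j+1)/2}$, far exceeding $H^{-(v_j+1)}$. You do acknowledge this phenomenon and invoke the essential/inessential-domain method to control it, but that method does not produce a per-form bound that can simply be multiplied by the lattice-point count $H^{n+1}$; it reorganises the sum by grouping forms whose expanded domains overlap and requires a separate argument --- a counting of near-degenerate forms or an induction on the order of vanishing --- to dispose of the inessential contribution. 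That separate argument is exactly the content of the theorem, and your proposal names it without supplying it. It is worth noting that in this paper the essential/inessential machinery appears later (Proposition~\ref{p:9}) and there it \emph{uses} Theorem~\ref{thm-2} as an input to eliminate the near-degenerate cases (Propositions~\ref{p:7} and~\ref{p:8}); so within the paper's architecture your route would be circular, and making it self-contained would amount to redoing the Kleinbock--Margulis-type analysis by hand. Your identification of \eqref{ve} as the exact convergence threshold, and the exponent arithmetic $n+1-2m-\sum_j(v_j+v_j')<0$, are correct.
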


\medskip

\begin{remark}
Using Minkowski's theorem for systems of linear forms or indeed the standard pigeonhole argument, one can readily show that if \eqref{ve} does not hold then there is a choice of positive constants $c_j,c'_j$ ($1\le j\le m$) such that \eqref{vb0} holds for infinitely many $F\in\cF$ on an open subset of $(\vv x_1,\dots,\vv x_m)$. Thus, \eqref{ve} is both sufficient and necessary assumption for the conclusion of Theorem~\ref{thm-2} to hold.
\end{remark}

\medskip

\begin{remark}
Using the inhomogeneous transference technique of \cite{BV10inh} one can generalise Theorem~\ref{thm-2} and indeed Theorem~\ref{t2} to inhomogeneous approximations. We leave the exploration of this research avenue to an interested reader, but see \cite{BBV13} and \cite{BKM} for a related content.
\end{remark}

\medskip

\begin{remark}
If some of $v_j'$ are equal to $-1$ the statement of Theorem~\ref{thm-2} will hold if the corresponding inequalities $|F_j'(\vv x_j)|<c'_jH(F)^{-v_j'}$ within
\eqref{vb0} are omitted. This is due to the fact $|F_j'(\vv x_j)|\ll H(F_j)$ anyway, where the implied constant in the Vinogradov symbol $\ll$ can be made absolute on any compact subset of $U_j$.
\end{remark}

\begin{remark}\label{rem3}
The condition of linear independence that we impose on every $(n+1)$-tuple $(f_{j,0},\dots,f_{j,n})$ of analytic functions is often referred to as {\em non-degeneracy} \cite{Kleinbock-Margulis-98:MR1652916}. It is well known that for every $j$ the domain $U_j$ of the non-degenerate analytic map $\vv f_j$ can be foliated by a continuous family of polynomial curves so that the restriction of $\vv f_j$ onto any of these curves is still an analytic non-degenerate map. Then, using such a foliation together with Fubini's theorem reduces the general case of Theorems~\ref{t2} and~\ref{thm-2} to the case of curves, that is the case when every map $\vv f_j$ is of a single real variable, say $x_j$. Specifically, this can be done in a fairly straightforward manner by making use of the Fibering Lemma of \cite[page~1206]{BerInvent} (see also Lemma~\ref{FL} below). The upshot of this remark is that in the course of establishing Theorems~\ref{t2} we can assume without loss of generality that every $U_j$ is an interval in $\R$, that is $d_1=\dots=d_m=1$.
\end{remark}

\section{Proof of Theorem~\ref{thm-2}}

\subsection{Preliminaries}\label{iprp}

Consider the following system of inequalities
\begin{equation}\label{e:082}
    \Big|\sum_{j=1}^k g_{i,j}(\vv x)\,a_j\Big|\le\theta_i\quad
    (1\le i\le k)\,,
\end{equation}
where $g_{i,j}:U\to\R$ are functions of $\vv x=(x_1,\dots,x_m)$ defined on an
open subset $U$ of $\R^{\ddd}$, $a_1,\dots,a_k$ are real variables and
$\bmtheta=(\theta_1,\dots,\theta_k)$ is a fixed $k$-tuple of
positive numbers. We will assume that $G(\vv x):=(g_{i,j}(\vv x))_{1\le i,j\le k}\in \GL_{k}(\R)$ for every
$\vv x\in U$.
Let
$$
\cA(G,\bmtheta):=\{\vv x\in U:\exists\ \vv
a=(a_1,\dots,a_k)\in\Z^{k}\smallsetminus\{\vv0\}\text{ satisfying
(\ref{e:082})}\}.
$$
We will be interested in estimating $\lambda_m(B\cap\cA(G,\bmtheta))$ in terms of $\lambda_m(B)$. For this purpose, we will use a general answer to this problem provided in \cite{BerAnn}, which in turn is a consequence of the even more general theorem of Kleinbock and Margulis from \cite{Kleinbock-Margulis-98:MR1652916}.
Following \cite[\S5]{BerAnn}, let
\begin{equation}\label{e:083}
\theta=(\theta_1\cdots\theta_k)^{\frac1k}.
\end{equation}
Given $\vv x\in U$ and a subspace
$V$ of $\R^k$ with $\codim V=r$, where $1\le r<k$, define
\begin{equation}\label{e:084}
 \Theta_{\bmtheta}(\vv x,V):=\min\left\{\theta^{-r}\prod_{i=1}^r\theta_{j_i}\,:\,
 \begin{array}{l} \{j_1,\dots,j_r\}\subset\{1,\dots,k\} \ \ \text{such
that}\\[0.5ex]
 V\oplus\cV\big(\vv g_{j_1}(\vv x),\dots,\vv g_{j_r}(\vv x)\big)=\R^k\,,
\end{array}\right\}
\end{equation}
where $\cV\big(\vv g_{j_1},\dots,\vv g_{j_r}\big)$ is the
subspace of $\R^k$ spanned by $\vv g_{j_1},\dots,\vv g_{j_r}$. Given $\vv x_0\in U$, let
\begin{equation}\label{e:085}
\widehat\Theta_{\bmtheta}(\vv x_0,V):=\liminf_{\vv x\to\vv
x_0}\Theta_{\bmtheta}(\vv x,V)\qquad\text{and}\qquad
\widehat\Theta_{\bmtheta}(\vv
x_0):=\sup_V\widehat\Theta_{\bmtheta}(\vv x_0,V),
\end{equation}
where the supremum is taken over subspaces $V\subset
\R^k$ with $1\le\codim V<k$.

\medskip

The following general result appears as Theorem~5.2 in \cite{BerAnn}.

\begin{proposition}[Theorem~5.2 in \cite{BerAnn}]\label{p:1}
Let $U$ be an open subset of\/ $\R^{\ddd}$, $G:U\to\GL_{k}(\R)$ be
an analytic map and $\vv x_0\in U$. Then there is a ball $B_0\subset
U$ centred at $\vv x_0$ and constants $K_0,\alpha>0$ such that for
any ball $B\subset B_0$ there is $\delta=\delta(B,G)>0$
such that for any $k$-tuple $\bmtheta=(\theta_1,\dots,\theta_k)$ of
positive numbers
\begin{equation}\label{e:086}
\lambda_{\ddd}\Big(B\cap\cA(G,\bmtheta)\Big)\le K_0\,\Big(1+\big(\sup_{\vv
x\in B}\widehat\Theta_{\bmtheta}(\vv
x)/\delta\big)^\alpha\Big)\,\theta^{\alpha}\,\lambda_{\ddd}(B)\,.
\end{equation}
\end{proposition}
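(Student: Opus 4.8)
\textbf{Proof plan for Proposition~\ref{p:1}.}
The plan is to deduce this statement directly from the main quantitative non-divergence estimate of Kleinbock and Margulis \cite{Kleinbock-Margulis-98:MR1652916}, exactly as indicated in the surrounding text: Proposition~\ref{p:1} is ``Theorem~5.2 in \cite{BerAnn}'', and the job here is to reconstruct how that theorem follows from the $(C,\alpha)$-good / non-planarity machinery. First I would reformulate the condition $\vv x\in\cA(G,\bmtheta)$ in the language of lattices. For $\vv a\in\Z^k\setminus\{\vv0\}$, the vector $G(\vv x)\vv a$ has $i$th coordinate $\sum_j g_{i,j}(\vv x)a_j$, so \eqref{e:082} says exactly that $G(\vv x)\vv a$ lies in the box $\prod_{i=1}^k[-\theta_i,\theta_i]$. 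Conjugating by the diagonal matrix $D_{\bmtheta}=\operatorname{diag}(\theta/\theta_1,\dots,\theta/\theta_k)$ (note $\det D_{\bmtheta}=1$ by \eqref{e:083}), this is equivalent to $D_{\bmtheta}G(\vv x)\vv a$ lying in the cube $[-\theta,\theta]^k$. Hence $\vv x\in\cA(G,\bmtheta)$ iff the unimodular lattice $\Lambda_{\bmtheta}(\vv x):=D_{\bmtheta}G(\vv x)\Z^k$ contains a nonzero vector of sup-norm $<\theta$, i.e. iff $\delta_{\mathrm{cube}}(\Lambda_{\bmtheta}(\vv x))<\theta$, where $\delta_{\mathrm{cube}}$ denotes the length of the shortest nonzero vector measured in the sup-norm. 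This is precisely the kind of set the Kleinbock--Margulis theorem bounds.

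Next I would invoke the Kleinbock--Margulis non-divergence estimate in the form: if $h:B_0\to\GL_k(\R)$ is an analytic (or smooth non-planar) map and $\vv x_0\in B_0$, then there exist a sub-ball $B_0'$, constants $K_0,\alpha>0$, and for each ball $B\subset B_0'$ a constant $\delta(B)>0$ such that for all $\eta\le\delta(B)$,
$$
\lambda_{\ddd}\bigl(\{\vv x\in B:\ h(\vv x)\Z^k\text{ has a nonzero vector of norm}<\eta\}\bigr)\le K_0\,(\eta/\delta(B))^{\alpha}\,\lambda_{\ddd}(B)\,.
$$
The crucial point is that here one applies this not to a fixed map but to the \emph{family} $h=h_{\bmtheta}=D_{\bmtheta}G$ indexed by $\bmtheta$, and the constants $K_0,\alpha$ must be \emph{uniform} in $\bmtheta$, while $\delta$ may depend on $B$ and on the lattice-valued map through its geometry near $\vv x_0$. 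Uniformity in $\bmtheta$ comes from the fact that $D_{\bmtheta}$ is unimodular, so the $(C,\alpha)$-good constants for the coordinate functions of exterior powers $\bigwedge^\ell (D_{\bmtheta}G)$ are controlled by those of $\bigwedge^\ell G$ (diagonal conjugation only rescales, and $(C,\alpha)$-goodness is scale-invariant), and the non-planarity/non-degeneracy of the flag spanned by the rows is likewise a property of $G$ alone. Applying the estimate with $\eta=\theta$ gives a bound of the right shape, except that it is valid only when $\theta\le\delta(B)$; for $\theta>\delta(B)$ the trivial bound $\lambda_{\ddd}(B\cap\cA)\le\lambda_{\ddd}(B)\le(\theta/\delta(B))^{\alpha}\lambda_{\ddd}(B)$ already does the job, which is why the final statement carries the factor $(1+\dots)$.

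The part that requires real work, and where \eqref{e:084}--\eqref{e:085} enter, is identifying the quantity $\delta(B)$ more precisely: the Kleinbock--Margulis theorem does not merely give ``some $\delta$'', it gives a bound in terms of the infimum over $\vv x\in B$ of the norms of the relevant subproducts $\|\bigwedge(\text{subsets of rows of }D_{\bmtheta}G(\vv x))\|$, i.e. of the successive minima/covolumes of the sublattices generated by rows. Unwinding $D_{\bmtheta}$, the covolume of the sublattice spanned by rows indexed by $\{j_1,\dots,j_r\}$ of $D_{\bmtheta}G(\vv x)$ is comparable to $\theta^{-r}\prod_i\theta_{j_i}$ times the covolume of the corresponding rows of $G(\vv x)$; taking the $\min$ over admissible index sets $\{j_1,\dots,j_r\}$ — namely those for which the chosen rows together with a complementary subspace $V$ span $\R^k$ — is exactly the definition of $\Theta_{\bmtheta}(\vv x,V)$, and optimising over $V$ and passing to the liminf as $\vv x\to\vv x_0$ produces $\widehat\Theta_{\bmtheta}(\vv x_0)$. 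Thus the abstract $\delta$ of Kleinbock--Margulis is, up to a constant depending only on $G$ and $B_0$, a fixed multiple $\delta(B,G)$ times $\widehat\Theta_{\bmtheta}(\vv x)$, which is how $\sup_{\vv x\in B}\widehat\Theta_{\bmtheta}(\vv x)/\delta$ surfaces in \eqref{e:086}. I expect the main obstacle to be precisely this bookkeeping: carefully matching the lattice-theoretic hypotheses of the Kleinbock--Margulis theorem (the conditions on all exterior powers over a ball, the role of the ``$\rho$''-type parameter) with the combinatorial quantity $\widehat\Theta_{\bmtheta}$, and checking that every constant that must be independent of $\bmtheta$ genuinely is — which reduces, as noted, to the unimodularity of $D_{\bmtheta}$ together with the scale-invariance of $(C,\alpha)$-goodness. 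Once this dictionary is in place, the inequality \eqref{e:086} drops out by a single application of the theorem with $\eta=\theta$ combined with the trivial bound, and the reduction to curves (Remark~\ref{rem3}) is not needed here since $G$ is allowed to be a map of $\ddd$ variables throughout.
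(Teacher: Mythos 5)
The paper does not prove this proposition at all: it is imported verbatim as Theorem~5.2 of \cite{BerAnn}, with only the remark that it is in turn a consequence of the quantitative non-divergence theorem of Kleinbock and Margulis. Your plan therefore cannot be compared with an in-paper argument; what it does is reconstruct, correctly in outline, the strategy of the \emph{external} proof: reformulate $\vv x\in\cA(G,\bmtheta)$ as the unimodular lattice $D_{\bmtheta}G(\vv x)\Z^k$ having a nonzero vector of sup-norm at most $\theta$, apply the non-divergence estimate uniformly in $\bmtheta$ (uniformity of $K_0,\alpha$ resting on the scale-invariance of $(C,\alpha)$-goodness and on non-planarity being a property of $G$ alone), and absorb the case $\theta>\delta$ into the trivial bound, which is indeed where the factor $1+(\cdot)^{\alpha}$ comes from.

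One caution on the part you rightly identify as ``the real work''. The sublattices to which Kleinbock--Margulis is applied are spanned by images of integer vectors, i.e.\ by columns $D_{\bmtheta}G(\vv x)\vv a_i$; the rows $\vv g_{j_i}(\vv x)$ and the subspaces $V$ in \eqref{e:084} enter dually, and your asserted scaling --- that the $r$-dimensional covolume attached to rows $\{j_1,\dots,j_r\}$ of $D_{\bmtheta}G(\vv x)$ picks up the factor $\theta^{-r}\prod_i\theta_{j_i}$ --- is the reciprocal of the correct one, since the $j$th row of $D_{\bmtheta}G$ is $(\theta/\theta_j)$ times the $j$th row of $G$. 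This sign error in the dictionary is harmless for the shape of \eqref{e:086} (the quantity $\widehat\Theta_{\bmtheta}$ ends up in the numerator either way once the duality is unwound), but it shows that matching the Kleinbock--Margulis hypotheses on all exterior powers with the combinatorics of \eqref{e:084}--\eqref{e:085} is not automatic; it occupies most of \S5 of \cite{BerAnn}. For the purposes of the present paper, a citation is the intended proof, and your sketch is a faithful account of what lies behind it rather than a self-contained replacement.
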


When applying the above theorem, estimating $\widehat\Theta_{\bmtheta}(\vv x)$ becomes the main task. For example, it was shown in \cite[Lemmas~5.6 and 5.7]{BerAnn} that if $g_1,\dots,g_k$ is a collection of real analytic linearly independent over $\R$ functions of one variable and $G(x)=(g^{(i-1)}_j(x))_{1\le i,j\le k}$, then
\begin{equation}\label{e:107}
    \widehat\Theta_{\bmtheta}(x_0)\ \le\ \tilde\Theta:=\max_{1\le
    r\le k-1}\ \frac{\theta_1\cdots\theta_{r}}{\theta^r}
\end{equation}
for every $x_0$. It was subsequently shown in \cite[Lemma~2]{BBG10} that the parameter $\tilde\Theta$ can be further estimated by a simple expression as follows
\begin{equation}\label{omega}
    \tilde\Theta\le \max\left\{\frac{\theta_1}{\theta^k},\frac{1}{\theta_k}\right\}
\end{equation}
provided that $\theta$, that is given by \eqref{e:083}, is less than or equal to $1$ and the $k$-tuple $\bmtheta=(\theta_1,\dots,\theta_k)$ satisfies the following

\medskip

\noindent\textbf{Property M: } Given a $k'$-tuple $(\theta_1',\dots,\theta_{k'}')$ of positive real numbers with $k'\ge1$ we will say that it \emph{satisfies property \BBG} if there exists an integer $\ell'$ with $0\le \ell'\le k'$ such that
\begin{equation}\label{bbg}
\theta'_1,\dots,\theta'_{\ell'}\le1 \quad\text{while}\quad\theta'_{\ell'+1},\dots,\theta'_{k'}\ge1\,.
\end{equation}

To end this discussion we now formally state a lemma which is formally established within the proof of Lemma~5.7 in \cite{BerAnn}.

\begin{lemma}\label{l3}
Let $\vv g=(g_1,\dots,g_k)$ be a $k$-tuple of real analytic linearly independent over $\R$ functions defined on an interval $I$. Let $V$ be a linear subspace of\/ $\R^k$ with $\codim V=r\le k-1$. Define
$$
G(V)=\{x\in I:\dim V\oplus\cV(\vv g^{(0)}(x),\dots,\vv g^{(r-1)}(x))=k\}\,.
$$
Then $I\setminus G(V)$ is at most countable.
\end{lemma}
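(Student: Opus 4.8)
The plan is to exhibit a single real-analytic function on $I$ whose zero set is exactly $I\setminus G(V)$, and then show this function is not identically zero; since a nonzero analytic function on an interval has only isolated zeros, $I\setminus G(V)$ is then at most countable.

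First I would realise $V$ as a kernel. Since $\codim V=r$, there is a surjective linear map $\Lambda\colon\R^k\to\R^r$ with $V=\ker\Lambda$; write $\vv\lambda_1,\dots,\vv\lambda_r\in\R^k$ for its rows, which are linearly independent. The elementary observation is: for $x\in I$ one has $V\oplus\cV(\vv g^{(0)}(x),\dots,\vv g^{(r-1)}(x))=\R^k$ if and only if the vectors $\Lambda\vv g^{(0)}(x),\dots,\Lambda\vv g^{(r-1)}(x)$ are linearly independent in $\R^r$. Indeed, if $v=\sum_{i=0}^{r-1}c_i\vv g^{(i)}(x)\in\ker\Lambda$ then $\sum c_i\,\Lambda\vv g^{(i)}(x)=0$, so independence of the $\Lambda\vv g^{(i)}(x)$ forces all $c_i=0$, hence $v=0$ and $\dim\cV(\vv g^{(0)}(x),\dots,\vv g^{(r-1)}(x))=r$, giving the direct sum by a dimension count; the converse is immediate since a complement of $\ker\Lambda$ maps isomorphically onto $\R^r$ under $\Lambda$. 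Thus $G(V)=\{x\in I:\det M(x)\neq0\}$, where $M(x)$ is the $r\times r$ matrix with columns $\Lambda\vv g^{(0)}(x),\dots,\Lambda\vv g^{(r-1)}(x)$.

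The key point is that $\det M(x)$ is a Wronskian. Set $\phi_a=\sum_{j=1}^k\lambda_{a,j}g_j$ for $a=1,\dots,r$, where $\lambda_{a,j}$ are the entries of $\Lambda$. These are real-analytic on $I$, and since $g_1,\dots,g_k$ are linearly independent over $\R$ and $\vv\lambda_1,\dots,\vv\lambda_r$ are linearly independent in $\R^k$, the functions $\phi_1,\dots,\phi_r$ are linearly independent over $\R$. Because $\big(\Lambda\vv g^{(i)}(x)\big)_a=\phi_a^{(i)}(x)$, the matrix $M(x)$ has $(a,b)$-entry $\phi_a^{(b-1)}(x)$, so $\det M(x)=\mathrm{Wr}(\phi_1,\dots,\phi_r)(x)$. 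This is real-analytic in $x$, and by the classical fact that the Wronskian of finitely many real-analytic functions that are linearly independent over $\R$ does not vanish identically, it is not identically $0$. Hence $I\setminus G(V)=\{x\in I:\mathrm{Wr}(\phi_1,\dots,\phi_r)(x)=0\}$ is a discrete, hence at most countable, subset of $I$.

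The only non-formal ingredient is the non-vanishing of $\mathrm{Wr}(\phi_1,\dots,\phi_r)$; everything else is routine linear algebra together with the fact that nonzero analytic functions on an interval have discrete zero sets. That ingredient is itself standard: it follows by induction on $r$ using the reduction-of-order identity $\mathrm{Wr}(\phi_1,\dots,\phi_r)=\phi_1^{\,r}\,\mathrm{Wr}\big((\phi_2/\phi_1)',\dots,(\phi_r/\phi_1)'\big)$ on the open set where $\phi_1\neq0$, patching the integration constants across its connected components by the identity theorem (a nontrivial relation among $\phi_2/\phi_1,\dots,\phi_r/\phi_1$ on one component extends, after clearing denominators, to a nontrivial relation among $\phi_1,\dots,\phi_r$ on all of $I$). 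Alternatively, one may avoid introducing $\Lambda$ and instead expand $\det[\vv w_1,\dots,\vv w_{k-r},\vv g(x),\dots,\vv g^{(r-1)}(x)]$ — with $\vv w_1,\dots,\vv w_{k-r}$ a basis of $V$ — by a generalised Laplace expansion along its last $r$ columns, reducing again to the same Wronskian criterion applied to sub-tuples of $(g_1,\dots,g_k)$. Either route reproduces the argument hidden in the proof of Lemma~5.7 of \cite{BerAnn}.
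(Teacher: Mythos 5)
Your proof is correct. The paper does not prove Lemma~\ref{l3} itself but only cites it as being established within the proof of Lemma~5.7 of \cite{BerAnn}; your argument --- realising $V$ as $\ker\Lambda$, reducing the direct-sum condition to the non-vanishing of the Wronskian of the linearly independent analytic functions $\phi_a=\sum_j\lambda_{a,j}g_j$, and then using that a non-zero analytic function has isolated zeros --- is a correct, self-contained reconstruction of exactly that argument, and you also correctly justify the one non-formal ingredient (non-vanishing of the Wronskian of linearly independent analytic functions) via reduction of order and the identity theorem.
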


To some extent this lemma was the key to showing \eqref{e:107} and its following generalisation will be used in our subsequent arguments.

\begin{lemma}\label{l3m}
Let $\vv g=(g_1,\dots,g_k)$ be a $k$-tuple of real analytic linearly independent over $\R$ functions defined on an interval $I$. Let $V$ be a linear subspace of $\R^k$, $0\le s\le \codim V$ be an integer and
$$
G(V,s)=\{x\in I:\dim V\oplus\cV(\vv g^{(0)}(x),\dots,\vv g^{(s-1)}(x))=\dim V+s\}\,.
$$
Then $I\setminus G(V,s)$ is at most countable.
\end{lemma}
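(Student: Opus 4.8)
The plan is to reduce Lemma~\ref{l3m} to Lemma~\ref{l3} by a short dimension-counting argument, so that no new analytic input is needed beyond what already goes into Lemma~\ref{l3}. First I would dispose of the trivial cases: if $s=0$ then $G(V,0)=I$ and there is nothing to prove, and if $\codim V=0$ then $V=\R^k$ and again $G(V,s)=I$. So I may assume $1\le s\le\codim V$ and $V\subsetneq\R^k$. The key observation is that, unlike in Lemma~\ref{l3}, here $s$ need not equal $\codim V$, so $V$ together with $s$ generic derivative vectors will not span all of $\R^k$; instead they should span a space of the maximal possible dimension $\dim V+s$. I would therefore enlarge $V$ to a subspace $W\supseteq V$ of codimension exactly $s$, i.e. choose any complement so that $\dim W=k-s$ and $W\supseteq V$; this is possible precisely because $s\le\codim V$.

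Next I would apply Lemma~\ref{l3} to this $W$ with $r:=\codim W=s\le k-1$ (the case $s=k$ forces $\codim V=k$, i.e. $V=\{\vv 0\}$, which can be handled directly since then $G(V,s)$ is the set where $\vv g^{(0)}(x),\dots,\vv g^{(s-1)}(x)$ are linearly independent, and that set has at most countable complement by the linear independence of $g_1,\dots,g_k$ — this is exactly the content of the proof of Lemma~5.7 in \cite{BerAnn}, or one can invoke Lemma~\ref{l3} with $V$ of codimension $k-1$ and intersect). Lemma~\ref{l3} then tells us that $I\setminus G(W)$ is at most countable, where $G(W)$ is the set of $x\in I$ with $W\oplus\cV(\vv g^{(0)}(x),\dots,\vv g^{(s-1)}(x))=\R^k$. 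The final step is to check the inclusion $G(W)\subseteq G(V,s)$: if $x\in G(W)$, then $\vv g^{(0)}(x),\dots,\vv g^{(s-1)}(x)$ span a complement to $W$, hence in particular they are linearly independent modulo $W$, and since $V\subseteq W$ they are a fortiori linearly independent modulo $V$; therefore $\dim\big(V\oplus\cV(\vv g^{(0)}(x),\dots,\vv g^{(s-1)}(x))\big)=\dim V+s$, i.e. $x\in G(V,s)$. Consequently $I\setminus G(V,s)\subseteq I\setminus G(W)$ is at most countable, which is the claim.

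I do not anticipate a genuine obstacle here; the lemma is a purely linear-algebraic repackaging of Lemma~\ref{l3}, and the only point requiring a little care is the boundary case $\codim V=k$ (equivalently $V=\{\vv0\}$) together with $s=k$, where one cannot choose $W$ of codimension $s\le k-1$ and must instead argue directly that the Wronskian-type determinant $\det\big(\vv g^{(i-1)}(x)\big)_{1\le i\le k}$ is a real analytic function, not identically zero by linear independence of the $g_i$ (this non-vanishing is standard and is established en route to Lemma~\ref{l3}), hence has at most countably many zeros in $I$. In all remaining cases the reduction to Lemma~\ref{l3} via the auxiliary space $W$ is immediate.
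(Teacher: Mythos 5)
Your proof is correct and follows essentially the same route as the paper: enlarge $V$ to a subspace $W\supseteq V$ with $\codim W=s$, apply Lemma~\ref{l3} to $W$, and deduce $G(W)\subseteq G(V,s)$ from $V\subseteq W$. Your extra care with the boundary case $s=k$ (where Lemma~\ref{l3} as stated requires $\codim \le k-1$) is a detail the paper's proof silently glosses over; otherwise the two arguments are identical.
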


\begin{proof}
There is nothing to prove if $\dim V=k$ as in this case $s=0$ and $G(V,s)=I$. Assume that $\dim V<k$. Take any subspace $W$ of $\R^k$ such that $W\supset V$ and $\dim W+s=k$. By Lemma~\ref{l3}, $I\setminus G(W,s)$ is at most countable. Note that, since $V\subset W$, whenever $W\oplus\cV(\vv g^{(0)}(x),\dots,\vv g^{(s-1)}(x))=k=\dim W+s$, we necessarily have that $V\oplus\cV(\vv g^{(0)}(x),\dots,\vv g^{(s-1)}(x))=\dim V+s$. Hence $G(W,s)\subset G(V,s)$ and consequently $I\setminus G(V,s)\subset I\setminus G(W,s)$ is at most countable.
\end{proof}

\subsection{An application of Proposition~\ref{p:1}}

From now on let $k=n+1$, $m\le n$ and for $j=1,\dots,m$\, let $U_j$ be an interval in $\R$ and let $\vv f_j=(f_{j,0},\dots,f_{j,n}):U_j\to\R$ be an $(n+1)$-tuple of real analytic functions linearly independent over $\R$. Choose any integers $\ell_i\ge0$ such that
\begin{equation}\label{vb1}
\sum_{j=1}^m(\ell_j+1)=n+1~(=k)\,.
\end{equation}
Define $G(\vv x)$ of $\vv x=(x_1,\dots,x_m)\in U=U_1\times\dots\times U_m$ by setting
\begin{equation}\label{G}
G(x_1,\dots,x_m)=\left(\begin{array}{l}
                        \left.\begin{array}{c}
                          \vv f_1(x_1) \\
                          \vdots\\
                          \vv f_1^{(\ell_1)}(x_1)
                        \end{array}\right\}\ \ell_1+1\text{ times}\\[5ex]
                        \qquad\qquad\quad\cdot\\[0ex]
                        \qquad\qquad\quad\cdot\\[0ex]
                        \qquad\qquad\quad\cdot\\[2ex]
                        \left.\begin{array}{c}
                          \vv f_m(x_m) \\
                          \vdots\\
                          \vv f_m^{(\ell_m)}(x_m)
                        \end{array}\right\}\ \ell_m+1\text{ times}
                       \end{array}
\right)\,.
\end{equation}
Thus, the first $\ell_1+1$ rows are the map $\vv f_1(x_1)$ and its derivatives $\vv f_1^{(i)}(x_1)$ up to the order $\ell_1$; then we have $\ell_2+1$ rows which are the map $\vv f_2(x_2)$ and its derivatives $\vv f_2^{(i)}(x_2)$ up to the order $\ell_2$; and so on.
By \eqref{vb1}, $G(\vv x)$ is an $(n+1)\times(n+1)$ (that is $k\times k$) matrix.

Further, we choose any $k$-tuple $\bmtheta$ of positive real numbers which components will be combined into $m$ groups as follows
\begin{equation}\label{theta}
\bmtheta=(\tilde\theta_1,\dots,\tilde\theta_m)=(\underbrace{\theta_{1,0},\dots, \theta_{1,\ell_1}}_{\textstyle\tilde\theta_1},\ .\ . \ .\ ,\underbrace{\theta_{m,0},\dots,\theta_{m,\ell_m}}_{\textstyle\tilde\theta_m})\,.
\end{equation}
Thus
$$
\tilde\theta_j=(\theta_{j,0},\dots,\theta_{j,\ell_j})\quad\text{for each }j=1,\dots,m.
$$
In this case the parameter $\theta$ introduced by \eqref{e:083} is as follows
\begin{equation}\label{e:083+}
\theta=\left(\prod_{j=1}^m\prod_{i=0}^{\ell_j}\theta_{j,i}\right)^{\frac1{n+1}}\,.
\end{equation}

\begin{proposition}\label{p:2}
Assuming the above definitions we have that $\det G(\vv x)\neq0$ for almost all\, $\vv x\in U$ and we also have that for any\, $\vv x_0\in U$
\begin{equation}\label{e:107+vb}
    \widehat\Theta_{\bmtheta}(\vv x_0)\ \le\ \widehat\Theta:=\max_{1\le
    r\le n}\ \min\limits_{\substack{r_1+\dots+r_m=r\\[0.5ex] 0\le r_j\le \ell_j\ (1\le j\le m) }}\frac{\prod_{j=1}^m\prod_{i=0}^{r_j-1}\theta_{j,i}}{\theta^r}\,.
\end{equation}
In particular, if\/ $\theta\le 1$ and for each $j=1,\dots,d$ ~$\tilde\theta_j$ satisfies Property~\BBG{}
then
\begin{equation}\label{omega2}
    \widehat\Theta\le \max\left\{\frac{\theta_0}{\theta^{n+1}},\frac{1}{\theta_\infty}\right\}\,,
\end{equation}
where
\begin{equation}\label{theta_0}
\theta_0=\min_{1\le j\le m}\theta_{j,0}\qquad\text{and}\qquad \theta_\infty=\max_{\substack{1\le j\le m}}\theta_{j,\ell_j}\,.
\end{equation}
\end{proposition}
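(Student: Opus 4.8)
The plan is to deduce Proposition~\ref{p:2} from Proposition~\ref{p:1}'s underlying combinatorics by carefully analysing the quantity $\widehat\Theta_{\bmtheta}(\vv x_0,V)$ of \eqref{e:085} for an arbitrary subspace $V\subset\R^{k}$ with $1\le\codim V=r<k$, and then combining the groupwise structure of $G$ with the Property~\BBG{} hypothesis. First I would note that $\det G(\vv x)\neq0$ for almost all $\vv x$: by Lemma~\ref{l3} applied with $V=\{\vv0\}$ (so $r=k$), for each fixed choice of $x_2,\dots,x_m$ the set of $x_1$ for which the rows $\vv f_1(x_1),\dots,\vv f_1^{(\ell_1)}(x_1)$ together with the remaining rows fail to span is at most countable — more precisely one iterates group by group, using at each stage Lemma~\ref{l3m} with $V$ the span of the previously chosen groups and $s=\ell_j+1$, to see that the bad set in each $x_j$ is countable; Fubini then gives $\lambda_m(\{\det G=0\})=0$.

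Next, for the bound \eqref{e:107+vb}, fix $\vv x_0$ and a subspace $V$ with $\codim V=r$. Following \cite[Lemma~5.7]{BerAnn}, the key point is to produce, for $\vv x$ in a full-measure neighbourhood of $\vv x_0$, a choice of $r$ indices $\{j_1,\dots,j_r\}\subset\{1,\dots,k\}$ whose corresponding rows $\vv g_{j_i}(\vv x)$ complement $V$, chosen so as to make $\prod\theta_{j_i}$ as small as possible. Because the rows of $G$ come in $m$ analytic blocks (the $j$th block being $\vv f_j(x_j),\dots,\vv f_j^{(\ell_j)}(x_j)$), within the $j$th block the natural move is to take the first $r_j$ rows, i.e. derivatives of order $0,\dots,r_j-1$, whose product of $\theta$'s is $\prod_{i=0}^{r_j-1}\theta_{j,i}$; here Lemma~\ref{l3m} (with $V$ enlarged by the rows already selected from other blocks, and $s=r_j$) guarantees that, away from a countable exceptional set of $x_j$, these $r_j$ rows are linearly independent modulo $V$ and the previously chosen rows. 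Running over all $j$ with $\sum_j r_j=r$ and $0\le r_j\le\ell_j$, and using $\liminf_{\vv x\to\vv x_0}$ to discard the countable bad sets, yields $\widehat\Theta_{\bmtheta}(\vv x_0,V)\le\min_{r_1+\dots+r_m=r}\theta^{-r}\prod_{j=1}^m\prod_{i=0}^{r_j-1}\theta_{j,i}$; taking the supremum over $V$ (equivalently the maximum over $1\le r\le n$) gives exactly \eqref{e:107+vb}. The one subtlety to check is feasibility: for a given $r$ one needs some admissible tuple $(r_1,\dots,r_m)$ to exist, which holds since $\sum_j\ell_j=n$ by \eqref{vb1}, so $r\le n$ is always attainable.

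For the refinement \eqref{omega2}, I would argue that when $\theta\le1$ and each block $\tilde\theta_j$ satisfies Property~\BBG{}, the inner minimum in \eqref{e:107+vb} is dominated by one of two extreme strategies, mirroring the one-variable estimate \eqref{omega} from \cite[Lemma~2]{BBG10}. Within block $j$, Property~\BBG{} means the $\theta_{j,i}$ are $\le1$ for $i\le\ell_j'$ and $\ge1$ afterwards; so to shrink $\prod_{i=0}^{r_j-1}\theta_{j,i}$ one wants to include as many small factors as possible, and in particular $\theta_{j,0}=\min_i\theta_{j,i}$ in absolute size among the small ones. Choosing, for a suitable $r$, a single index realising $\theta_0=\min_j\theta_{j,0}$ gives the bound $\theta_0/\theta^{\,k}$ after accounting for the $\theta^{-r}$ normalisation (taking $r$ maximal so that all other selected factors are $\le1$ and can only help); the complementary strategy, used when the product of available small factors across all blocks already exceeds $\theta^{r}$, forces one instead to pick up a large factor, the cheapest such being bounded by $\theta_\infty=\max_j\theta_{j,\ell_j}$, giving $1/\theta_\infty$. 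A short case analysis — essentially the same bookkeeping as in \cite[Lemma~2]{BBG10} but carried out block-by-block — shows every admissible $(r_1,\dots,r_m)$ yields a value that is at most the larger of these two, whence \eqref{omega2}.

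I expect the main obstacle to be the combinatorial optimisation in the last paragraph: extending the clean two-term bound \eqref{omega} from a single ordered tuple to the product of $m$ separately-ordered blocks requires verifying that no "mixed" choice of derivative orders $(r_1,\dots,r_m)$ beats both extreme strategies, and keeping track of how the global normaliser $\theta^{-r}$ interacts with the per-block thresholds $\ell_j'$. The measure-theoretic and linear-algebraic parts are routine given Lemmas~\ref{l3} and~\ref{l3m}; the care needed is mostly in the indexing.
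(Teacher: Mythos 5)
Your overall route is the paper's: iterate Lemma~\ref{l3m} block by block (choosing at each stage the first few derivatives of $\vv f_j$ modulo the span of what has already been selected), use Fubini to get a full-measure set on which the rank condition holds, deduce $\det G\neq0$ almost everywhere, use density of that set to evaluate the $\liminf$ in \eqref{e:085} and take the supremum over $V$ for \eqref{e:107+vb}, and finally split into the regimes $r\le\ell$ and $r>\ell$ (with $\ell$ the number of components of $\bmtheta$ that are $\le1$) via Property~\BBG{} for \eqref{omega2}. One cosmetic point: Lemma~\ref{l3} cannot be invoked with $V=\{\vv0\}$, since it requires $\codim V\le k-1$; your subsequent iterative use of Lemma~\ref{l3m} is the correct (and the paper's) argument, so the first clause should simply be dropped. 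Your feasibility remark also miscounts: \eqref{vb1} gives $\sum_j\ell_j=n+1-m$, not $n$; the counting only works out because each block contributes $\ell_j+1$ rows (so one should really allow $r_j$ up to $\ell_j+1$, consistent with the product $\prod_{i=0}^{r_j-1}\theta_{j,i}$ reaching $\theta_{j,\ell_j}$ and with the $r=n$ case used for \eqref{fg3}) --- an off-by-one the paper's own statement shares.

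The one step that would fail as written is the closing claim of your third paragraph: that \emph{every} admissible $(r_1,\dots,r_m)$ yields a value at most $\max\{\theta_0/\theta^{n+1},1/\theta_\infty\}$. This is both unnecessary and false. Since $\widehat\Theta$ is a max over $r$ of a \emph{min} over tuples, what must be shown is that for each $r$ there \emph{exists} one admissible tuple with value at most the right-hand side; the universal version fails already for $r=1$ (take $m=2$, $\ell_1=\ell_2=0$, $\theta_{1,0}=\ve$ small, $\theta_{2,0}=1$: the tuple selecting block $2$ gives $1/\sqrt\ve$, far above the bound $1$). The two ``extreme strategies'' you describe are exactly the needed witnesses: for $r\le\ell$, Property~\BBG{} lets you choose a tuple whose factors are all $\le1$ and which contains $\theta_0$, giving at most $\theta_0/\theta^r\le\theta_0/\theta^{n+1}$; for $\ell<r\le n$ the paper reduces to $r=n$, where omitting the single largest trailing component yields $\theta^{n+1}/\theta_\infty$ and hence $1/\theta_\infty$ after dividing by $\theta^r\ge\theta^{n+1}$. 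With the quantifier corrected your argument coincides with the paper's.
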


\begin{proof}

Let $V=V_1$ be any linear subspace of $\R^{n+1}$ with $\dim V=s\le n$. Let $r=n+1-s$ and let
$r_1,\dots,r_m$ be non-negative integers satisfying $r_1+\dots+r_m=r\le n$ and $0\le r_j\le \ell_j$ $(1\le j\le m)$.

By Lemma~\ref{l3m}, there is a subset $S_1$ of $U_1$ of full Lebesgue measure in $U_1$ such that
$$
\rank\{V_1,\vv f_1(x_1),\dots,\vv f_1^{(r_1)}(x_1)\}=s+r_1
$$
for $x_1\in S_1$. Take any $x_1\in S_1$ and let $V_2$ be the subspace of $\R^{n+1}$ spanned by $V_1$ and $\vv f_1(x_1),\dots,\vv f_1^{(r_1)}(x_1)$. Then, by Lemma~\ref{l3m}, there is a subset $S_{2}(x_1)$ of $U_2$ of full measure such that
$$
\rank\{V_2,\vv f_2(x_2),\dots,\vv f_2^{(r_2)}(x_2)\}=\dim V_2+r_2
$$
for all $x_2\in S_{2}(x_1)$, that is
\begin{equation}\label{lin}
\rank\{V_1,\vv g_1(x_1),\dots,\vv g_1^{(r_1)}(x_1),\vv g_2(x_2),\dots,\vv g_2^{(r_2)}(x_2)\}=s+r_1+r_2
\end{equation}
for all $x_2\in S_{2}(x_1)$. Using Fubini's theorem, one easily checks that the set
$$
S_2:=\{(x_1,x_2):x_1\in S_1,\ x_2\in S_{2}(x_1)\}\subset U_1\times U_2
$$
has full measure in $U_1\times U_2$. By construction, \eqref{lin} holds for all $(x_1,x_2)\in S_2$. Carrying on this procedure by induction in an obvious manner for $i=3,\dots,d$ we will construct a sequence $S_i$ of subsets of $U_1\times \dots \times U_i$ of full Lebesgue measure such that
\begin{equation}\label{lin2}
\rank\{V,\vv f_1(x_1),\dots,\vv f_1^{(r_1)}(x_1),\dots,\vv f_i(x_i),\dots,\vv f_i^{(r_i)}(x_i)\}=s+r_1+\dots+r_i
\end{equation}
for all $(x_1,\dots,x_i)\in S_i$. Taking $i=d$, $V=\{0\}$ and $r_i=\ell_i$ for every $i=1,\dots,d$ \eqref{lin2} implies that $\det G(\vv x)\neq0$ for all $(x_1,\dots,x_m)\in S_m$. Since the set $S_m$ has full Lebesgue measure is $U=U_1\times\dots\times U_m$, it means that $\det G(\vv x)\neq0$ for almost all $\vv x\in U$, thus establishing our first claim within Proposition~\ref{p:2}.

Next, the fact that $S_m$ is of full measure in $U$ implies that $S_m$ is dense in $U$. Then, for any subspace $V$ of $\R^{n+1}$ with $\dim V\le n$ and any $\vv x_0\in U$ there is a point $\vv x\in S_m$ arbitrarily close to $\vv x_0$ such that \eqref{lin2} holds. By \eqref{e:084}, we have that
$$
 \Theta_{\bmtheta}(\vv x,V)\le
\frac{\prod_{j=1}^m\prod_{i=0}^{r_j-1}\theta_{j,i}}{\theta^r}
$$
and since $\vv x$ can be taken arbitrarily close to $\vv x_0$, we get, by the l.h.s. of \eqref{e:085}, that
$$
\widehat\Theta_{\bmtheta}(\vv x_0,V)\le
\frac{\prod_{j=1}^m\prod_{i=0}^{r_j-1}\theta_{j,i}}{\theta^r}\,.
$$
Since the right hand side is not dependent on $V$ as long as $r$ is fixed we have that
$$
\widehat\Theta_{\bmtheta}(\vv x_0,V)\le \min\limits_{\substack{r_1+\dots+r_m=r\\[0.5ex] 0\le r_j\le \ell_j\ (1\le j\le m) }}\frac{\prod_{j=1}^m\prod_{i=0}^{r_j-1}\theta_{j,i}}{\theta^r}\,.
$$
In view of the r.h.s. of \eqref{e:085}, taking the supremum over all $V$ gives \eqref{e:107+vb} and thus proves our second claim within Proposition~\ref{p:2}.

Finally, by the assumption that $\theta\le1$, for
all $r\in\{1,\dots,n\}$ we have $\theta^r\ge
\theta^{n+1}=\theta_0\dots\theta_n$. Therefore
\begin{equation}\label{fg}
\widehat\Theta\le\frac{1}{\theta^{n+1}}\max_{1\le
    r\le n}\ \min\limits_{\substack{r_1+\dots+r_m=r\\[0.5ex] 0\le r_j\le \ell_j\ (1\le j\le m) }}~\prod_{j=1}^m\prod_{i=0}^{r_j-1}\theta_{j,i}\,.
\end{equation}
Let $\ell$ be the number of elements within $\bmtheta$ that are $\le1$. If $r\le \ell$, by Property~M that is satisfied by every $\tilde\theta_j$, we can always choose non-negative integers $r_1,\dots,r_m$ with $r_1+\dots+r_m=r$ such that every factor in the product in \eqref{fg} is $\le1$. Clearly, amongst all $r\le \ell$ the product in \eqref{fg} is maximal when $r=1$. Thus,
\begin{align}
\nonumber \max_{1\le r\le \ell}\ \min\limits_{\substack{r_1+\dots+r_m=r\\[0.5ex] 0\le r_j\le \ell_j\ (1\le j\le m) }}~\prod_{j=1}^m\prod_{i=0}^{r_j-1}\theta_{j,i}
& = \min\limits_{\substack{r_1+\dots+r_m=1\\[0.5ex] 0\le r_j\le \ell_j\ (1\le j\le m) }}~\prod_{j=1}^m\prod_{i=0}^{r_j-1}\theta_{j,i}\\[2ex]
& = \min_{1\le j\le m}\theta_{j,0}\stackrel{\eqref{theta_0}}{=}\theta_0\,.\label{fg2}
\end{align}
If $r>\ell$, then again by Property~M, we can always choose non-negative integers $r_1,\dots,r_m$ with $r_1+\dots+r_m=r$ so that every component of $\bmtheta$ that is $\le1$ will be contained in the product within \eqref{fg}. However, in this case the product will also contain a factor $>1$ (if such components are present in $\bmtheta$). Then amongst all $r$ with $\ell<r\le n$ the product in \eqref{fg} is maximal when $r=n$. Thus,
\begin{align}
\nonumber \max_{\ell< r\le n}\ \min\limits_{\substack{r_1+\dots+r_m=r\\[0.5ex] 0\le r_j\le \ell_j\ (1\le j\le m) }}~\prod_{j=1}^m\prod_{i=0}^{r_j-1}\theta_{j,i}
& = \min\limits_{\substack{r_1+\dots+r_m=n\\[0.5ex] 0\le r_j\le \ell_j\ (1\le j\le m) }}~\prod_{j=1}^m\prod_{i=0}^{r_j-1}\theta_{j,i}\\[2ex]
& = \min_{1\le j\le m}\frac{\theta^{n+1}}{\theta_{j,\ell_j}}~\stackrel{\eqref{theta_0}}{=}~\frac{\theta^{n+1}}{\theta_\infty}\,.\label{fg3}
\end{align}
Combining \eqref{fg2} and \eqref{fg3} together with \eqref{fg} gives \eqref{omega2} and completes the proof.
\end{proof}

\bigskip

Propositions~\ref{p:1} and \ref{p:2} put together imply the following effective result.

\begin{proposition}\label{p:3}
For $j=1,\dots,m$ let $U_j$ be an interval in $\R$, and let $\vv f_j=(f_{j,0},\dots,f_{j,n}):U_j\to\R^{n+1}$ be an $(n+1)$-tuple of real analytic functions linearly independent over $\R$. Let $\ell_i\ge0$ be integers satisfying \eqref{vb1} and for $\vv x=(x_1,\dots,x_m)\in U=U_1\times\dots\times U_m$ let $G(\vv x)$ be given by \eqref{G}.
Then for almost every $\vv x_0\in U$ we have that $\det G(\vv x_0)\not=0$ and there exists a ball $B_0\subset U$ centred at $\vv x_0$ and constants $K_0,\alpha>0$ such that for any ball $B\subset B_0$ there is a constant $\delta=\delta(B,\vv f_1,\dots,\vv f_m)>0$ such that for any $(n+1)$-tuple $\bmtheta$ of positive numbers given by \eqref{theta} such that each $\tilde\theta_j$ satisfies Property~M we have that
\begin{equation}\label{e:086+vb}
\lambda_{\ddd}\Big(B\cap\cA(G,\bmtheta)\Big)\le K_0\,\Big(1+(\widehat\Theta/\delta)^\alpha\Big)\,\theta^{\alpha}\,\lambda_{\ddd}(B)\,,
\end{equation}
where $\widehat\Theta$ is given by \eqref{e:107+vb}.
\end{proposition}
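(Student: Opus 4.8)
The plan is to derive Proposition~\ref{p:3} by a straightforward concatenation of Propositions~\ref{p:1} and~\ref{p:2}, with the only genuine work being to reconcile the quantifiers: Proposition~\ref{p:1} produces a ball $B_0$, constants $K_0,\alpha$ and, for each sub-ball $B$, a $\delta$, all depending only on the analytic map $G$, and then gives the bound \eqref{e:086} uniformly in $\bmtheta$; Proposition~\ref{p:2} replaces the rather opaque quantity $\widehat\Theta_{\bmtheta}(\vv x)$ appearing there with the explicit $\widehat\Theta$ of \eqref{e:107+vb}, but only under the standing hypotheses that $\det G(\vv x)\neq0$ a.e.\ (which is part of the conclusion of Proposition~\ref{p:2}) and that each $\tilde\theta_j$ satisfies Property~M. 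So the argument is: fix the data $U_j,\vv f_j,\ell_j$ as in the statement, form $G$ via \eqref{G}, and invoke Proposition~\ref{p:2} to conclude $\det G(\vv x)\neq0$ for a.e.\ $\vv x\in U$ and $\widehat\Theta_{\bmtheta}(\vv x_0)\le\widehat\Theta$ for every $\vv x_0\in U$.

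Next I would check that $G$ is indeed analytic on $U$: each entry of $G$ is a derivative $f_{j,i}^{(t)}(x_j)$ of an analytic function, hence analytic in $x_j$ and a fortiori in $\vv x=(x_1,\dots,x_m)$, so $G:U\to M_{n+1}(\R)$ is analytic, and on the full-measure set where $\det G\neq0$ it maps into $\GL_{n+1}(\R)$. Strictly, Proposition~\ref{p:1} wants $G:U\to\GL_k(\R)$; since $\det G$ is a real-analytic function on $U$ that is not identically zero (it is nonzero a.e.), its zero set is a proper analytic subvariety, so every $\vv x_0$ outside it has a neighbourhood on which $G$ takes values in $\GL_{n+1}(\R)$, and we apply Proposition~\ref{p:1} on that neighbourhood. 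This produces, for a.e.\ $\vv x_0\in U$, a ball $B_0\subset U$ centred at $\vv x_0$ together with $K_0,\alpha>0$, and for each ball $B\subset B_0$ a constant $\delta=\delta(B,G)>0$ — which, since $G$ is built solely from $\vv f_1,\dots,\vv f_m$ (the $\ell_j$ being fixed throughout), we rename $\delta(B,\vv f_1,\dots,\vv f_m)$.

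Finally, fix any $(n+1)$-tuple $\bmtheta$ arranged as in \eqref{theta} with each $\tilde\theta_j$ satisfying Property~M. Two cases: if $\theta\le1$, then the hypotheses of Proposition~\ref{p:2} are met and $\widehat\Theta_{\bmtheta}(\vv x)\le\widehat\Theta$ for every $\vv x\in B\subset U$; substituting this into \eqref{e:086} (using that $t\mapsto t^\alpha$ is increasing and that $1+(\,\cdot\,)^\alpha$ is monotone) yields exactly \eqref{e:086+vb}. If $\theta>1$, then $\theta^\alpha>1$ and the right-hand side of \eqref{e:086+vb} already exceeds $K_0\lambda_{\ddd}(B)\ge\lambda_{\ddd}(B)\ge\lambda_{\ddd}(B\cap\cA(G,\bmtheta))$ provided $K_0\ge1$ (which we may assume, enlarging $K_0$ if necessary), so the bound holds trivially. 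Hence \eqref{e:086+vb} holds in all cases, completing the proof.

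I do not anticipate a serious obstacle here, since Proposition~\ref{p:3} is essentially a packaging statement. The one point demanding a little care is the bookkeeping of the quantifier order — that $B_0,K_0,\alpha$ depend only on $\vv f_1,\dots,\vv f_m$ (and the fixed $\ell_j$), that $\delta$ is allowed to depend on $B$ as well, and that the final estimate is uniform over all admissible $\bmtheta$ — together with the minor technical step of passing from ``$\det G\neq0$ a.e.'' to ``$G$ is $\GL_{n+1}(\R)$-valued on a neighbourhood of $\vv x_0$,'' which is where real-analyticity of $\det G$ is used.
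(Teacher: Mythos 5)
Your proposal is correct and is essentially the paper's own argument: the paper simply asserts that Propositions~\ref{p:1} and~\ref{p:2} "put together" give Proposition~\ref{p:3}, and your write-up supplies exactly the quantifier bookkeeping and the (genuinely needed) localisation step that makes $G$ a $\GL_{n+1}(\R)$-valued analytic map near a.e.\ $\vv x_0$ so that Proposition~\ref{p:1} applies. The only superfluous bit is your case split on $\theta\le1$ versus $\theta>1$: the bound $\widehat\Theta_{\bmtheta}(\vv x_0)\le\widehat\Theta$ with $\widehat\Theta$ as in \eqref{e:107+vb} holds unconditionally by Proposition~\ref{p:2} (the hypotheses $\theta\le1$ and Property~M are only needed for the further estimate \eqref{omega2}), so the substitution into \eqref{e:086} works for every admissible $\bmtheta$ at once.
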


\subsection{Proof of Theorem~\ref{thm-2}}\label{3.3}

We will use the following simple and well known statement.

\begin{lemma}\label{lem-3}
Let $G$ be a $k\times k$ real matrix such that $|\det G|\ge C_1>0$ and every entry of $G$ is bounded above by $C_2>0$. Then, for any column $\vv a\in\R^k$ we have that
$$
|G\vv a|_\infty\ge \frac{C_1}{k!C_2^{k-1}}|\vv a|_\infty,
$$
where $|\cdot|_\infty$ is the supremum norm, that is  $|(y_1,\dots,y_k)|_\infty=\max\{|y_1|,\dots,|y_k|\}$.
\end{lemma}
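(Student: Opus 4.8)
The plan is to invert $G$ explicitly and to control the entries of $G^{-1}$ through the cofactor expansion. Set $\vv y=G\vv a$. Since $|\det G|\ge C_1>0$, the matrix $G$ is invertible, so $\vv a=G^{-1}\vv y$, and by Cramer's rule $a_i=\det(G^{(i)})/\det G$ for each $i$, where $G^{(i)}$ is obtained from $G$ by replacing its $i$th column with $\vv y$. Expanding $\det(G^{(i)})$ along its $i$th column gives $\det(G^{(i)})=\sum_{j=1}^k(-1)^{i+j}y_jM_{ji}$, where $M_{ji}$ is the $(j,i)$-minor of $G$, i.e.\ the determinant of a $(k-1)\times(k-1)$ submatrix of $G$.

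Next I would bound each minor crudely by the Leibniz formula: $|M_{ji}|$ is a sum of $(k-1)!$ terms, each a product of $k-1$ entries of $G$, so $|M_{ji}|\le(k-1)!\,C_2^{k-1}$. Hence
$$
|a_i|\ \le\ \frac{1}{C_1}\sum_{j=1}^k|y_j|\,(k-1)!\,C_2^{k-1}\ \le\ \frac{k!\,C_2^{k-1}}{C_1}\,|\vv y|_\infty\,,
$$
and taking the maximum over $i=1,\dots,k$ yields $|\vv a|_\infty\le \frac{k!\,C_2^{k-1}}{C_1}\,|G\vv a|_\infty$, which rearranges to the assertion.

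There is essentially no obstacle here: the only slightly delicate points are that $G$ is genuinely invertible (immediate from $|\det G|\ge C_1>0$) and the elementary bookkeeping in the minor estimate — the $(k-1)!$ coming from Leibniz combined with the extra factor $k$ from the sum over $j$ producing the stated $k!$. An equivalent route is to write $G^{-1}=(\det G)^{-1}\operatorname{adj}(G)$ and use $|\vv a|_\infty\le\|G^{-1}\|_\infty\,|G\vv a|_\infty$ together with the bound $\|G^{-1}\|_\infty=\max_i\sum_j|(G^{-1})_{ij}|\le k\cdot(k-1)!\,C_2^{k-1}/C_1$; this is the same computation in different clothing.
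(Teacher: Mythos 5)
Your argument is correct and is essentially identical to the paper's proof: both apply Cramer's rule to $\vv a=G^{-1}(G\vv a)$ and bound $|\det G_i|$ by $k!\,C_2^{k-1}|G\vv a|_\infty$ (your cofactor-plus-Leibniz bookkeeping is just the explicit justification of the step the paper calls trivial). No issues.
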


\begin{proof}
Define the column $\vv b=G\vv a$ and let $a_i$ be the $i$-th coordinate of $\vv a$. Let $G_i$ be the matrix obtained from $G$ by replacing the $i$-th column with $\vv b$. Trivially, we have that $|\det G_i|\le k!C_2^{k-1}|\vv b|_\infty$. Then, by Cramer's rule, we have that
$$
|a_i|=\frac{|\det G_i|}{|\det G|}\le \frac{k!C_2^{k-1}|\vv b|_\infty}{C_1}\,,
$$
whence the required estimate follows.
\end{proof}

\medskip

Now we move onto the proof of Theorem~\ref{thm-2}. As we noted in Remark~\ref{rem3} without loss of generality we will assume that $U_1,\dots,U_m$ are intervals in $\R$. Also without loss of generality we may assume that all the functions $f_{j,i}$ and their derivatives up to the order $n$ are uniformly bounded on $U_1,\dots,U_m$. Otherwise we can simply replace each $U_j$ with a subinterval $U_j'$ arbitrarily close in measure to $U_j$ which closure is contained in $U_j$ and use the standard compactness arguments to enforce the required condition. Thus, we have that
\begin{equation}\label{M}
 M:=\max_{1\le j\le m}\,\max_{0\le \ell\le n+1}\,\max_{0\le i\le n}\,\sup_{x_j\in U_j}|f^{(\ell)}_{j,i}(x_j)|<\infty\,.
\end{equation}
Further, by re-ordering the maps $\vv f_j$ if necessary, without loss of generality we can assume that $v'_1\ge v'_2\ge \ldots\ge v'_m\ge-1$. Let $\phi$ be the largest index such that $v'_\phi$ is strictly bigger than $-1$. The proof splits into two subcases.

\medskip

\noindent$\bullet$ ~~~First consider the case when $m+\phi\ge n+1$. Let $h=n+1-m$ and $G$ be given by \eqref{G} with
$\ell_1=\dots=\ell_h=1$ and $\ell_{h+1}=\dots=\ell_m=0$. Fix any $\vv x\in U$ such that $\det G(\vv x)\neq0$.
Then, by Lemma~\ref{lem-3}, for any integer vector $\vv a\in\Z^{n+1}\setminus\{\vv0\}$ such that \eqref{vb0} is satisfied for the corresponding $F$ we have that
\begin{align*}
\frac{|\det G(\vv x)|}{(n+1)!M^{n}}|\vv a|_\infty &\le |G(\vv x)\vv a|_\infty=\max\Big\{\max_{1\le j\le m}\{|F_j(x_j)|,~\max_{1\le j\le h}\{|F'_j(x_j)|\Big\}\\[1ex]
&\ll H(F)^{\max\{0,-v'_h\}}= |\vv a|_\infty^{\max\{0,-v'_h\}}.
\end{align*}
Hence
$$
|\vv a|_\infty^{1-\max\{0,-v'_h\}}\le \frac{(n+1)!M^{n}}{|\det G(\vv x)|}.
$$
Since $v'_h>-1$, $1-\max\{0,-v'_h\}$ is strictly positive, and the above inequality means that for our fixed $\vv x$ there are only finitely many solutions to \eqref{vb0}. By Proposition~\ref{p:2}, $\det G(\vv x)\neq0$ for almost all\, $\vv x\in U$. Thus, for almost every $\vv x\in U$ system \eqref{vb0} has only finitely many solutions $F\in\cF$. This completes the proof in the case $m+\phi>n+1$.

\bigskip

\noindent$\bullet$ ~~~Now consider the case when $m+\phi< n+1$. Let $G$ be given by \eqref{G} with
$\ell_1=n+2-\phi-m$, $\ell_2=\dots=\ell_\phi=1$ and $\ell_{\phi+1}=\dots=\ell_m=0$.
By Proposition~\ref{p:3}, $\det G(\vv x_0)\neq0$ for almost every point $\vv x_0\in U$. Hence it suffices to prove Theorem~\ref{thm-2} by replacing $U$ with a sufficiently small ball $B_0$ containing $\vv x_0$.
Let $\bmtheta=\bmtheta_t$ be given by \eqref{theta}, where
\begin{align*}
\tilde\theta_1 & =C(2^{-tv_1},2^{-tv_1'},\underbrace{2^t,\dots,2^t}_{n+1-m-\phi\text{ times}})\in\R^{n+3-m-\phi}\,,\\[1ex]
\tilde\theta_i & =C(2^{-tv_i},2^{-tv_i'})\in\R^2\qquad (2\le i\le \phi)\,,\\[3ex]
\tilde\theta_i & =C2^{-tv_i}\in \R\qquad (\phi+1\le i\le m)\,,
\end{align*}
and $C>1$ is a sufficiently large constant. Take any $t\in\N$. Then for any $\vv x\in U$ such that \eqref{vb0} holds for some $F\in\cF$ with $2^t\le H(F)<2^{t+1}$ we have that
\begin{equation}\label{vb0b}
|F(x_j)|< c_j2^{-v_jt}\quad\text{and}\quad
|F'(x_j)|< c'_j2^{-v_j't}
\end{equation}
for all $j\in\{1,\dots,m\}$. By \eqref{M}, we also have that $|F^{(i)}(x_j)|\le 2(n+1)M 2^t$ for any $1\le j\le m$ and $0\le i\le n$. Thus,
by taking
$$
C=\max\{2(n+1)M,c_1,\dots,c_m,c'_1,\dots,c'_m\}
$$
we make sure that $\vv x$ belongs to $\cA(G,\bmtheta_t)$, where the latter is defined in \S\ref{iprp}.
Furthermore, if \eqref{vb0} holds for infinitely many $F\in\cF$, then $\vv x\in \cA(G,\bmtheta_t)$ for infinitely many $t$. By the Borel-Cantelli lemma from probability theory our task of proving Theorem~\ref{thm-2} will be finished if we show that for almost every point $\vv x_0\in U$
\begin{equation}\label{conv1}
  \sum_{t=1}^\infty\lambda_m\big(\cA(G,\bmtheta_t)\cap B_0\big)<\infty
\end{equation}
for a sufficiently small ball $B_0$ centred at $\vv x_0$.

To verify \eqref{conv1} we shall use Proposition~\ref{p:3}. To this end, define
$$
\ve =v_1+\dots+v_m+v_1'+\dots+v_m'-n-1+2m
$$
and note that due to \eqref{ve} we have that $\ve>0$ while due to the definition of $\phi$ we have that
$$
\ve = v_1+\dots+v_m+v_1'+\dots+v_\phi'-n-1+m+\phi\,.
$$
Without loss of generality we can assume that $v_1>\ve$ as otherwise we can make the parameters $v_i$ and $v_i'$ other than $v_1$ smaller to meet this condition while keeping $\ve>0$. This is justifiable because such a change of the parameters would only make the sets $\cA(G,\bmtheta_t)$ bigger.

Using \eqref{e:083} or indeed \eqref{e:083+} observe that
$$
\theta^{n+1}\asymp 2^{-\ve t}.
$$
Further, by \eqref{theta_0} and the above definition of $\bmtheta$, we have that
$$
\theta_0\le \theta_{1,0}\ll 2^{-v_1t}\qquad\text{and}\qquad \theta_\infty\ge \theta_{1,\ell_1}\gg 2^{t}\,.
$$
Observe that in view of the conditions imposed on $v_i$ and $v_i'$ each $\tilde\theta_i$ satisfies property \BBG.
Hence, by \eqref{omega2}, we obtain that
$$
\widehat\Theta\ll \max\{2^{-(v_1-\ve)t},2^{-t}\}~\stackrel{v_1>\ve}{\ll}~ 1.
$$
By Proposition~\ref{p:3}, for almost every point $\vv x_0\in U$ there is a ball $B_0\subset U$ centred at $\vv x_0$ and a constant $\alpha>0$ such that
$$
\lambda_{\ddd}\Big(B_0\cap\cA(G,\bmtheta_t)\Big)\ll 2^{-\ve\alpha t/(n+1)}\quad\text{for all $t\in\N$}\,.
$$
Since the sum $\sum_{t\ge0}2^{-\ve\alpha t/(n+1)}$ converges, \eqref{conv1} follows and the proof is complete.

\section{The convergence case}

\subsection{Auxiliary assumptions and auxiliary statements}

The main and only goal of this section is to prove that $\lambda_d(\cL(\cF,\Psi))=0$ provided that
\begin{equation}\label{conv}
\sum_{h=1}^{\infty}h^{n-m}\Psi^m(h)<\infty.
\end{equation}
Note that the convergence sum condition implies that $h^{n-m}\Psi^m(h)\to0$ as $h\to\infty$, that is
 \begin{equation}\label{eq55}
\Psi(h)=o\Big(h^{-\frac{n-m}m}\Big) \quad \text{as }h\to\infty.
\end{equation}
In view of Remark~\ref{rem3}, we will assume that $U_1,\dots,U_m$ are intervals in $\R$ and so $d=m$. Also as discussed in \S\ref{3.3} we can assume that the constant $M$ defined by \eqref{M} is finite.
Recall that the goal is to prove that the set $\cL(\cF,\Psi)$ of points $(x_1,\dots,x_m)\in U$ such that
\begin{equation}\label{eq40q}
  |F_j(x_j)|<\Psi(H(F))\qquad(1\le j\le m)
\end{equation}
holds for infinitely many $F\in\cF$ has zero Lebesgue measure.

Since the case when $d=1$ is a direct consequence of the main result of \cite{bu1}, for the rest of the proof we will assume that $d\ge2$. For every $k\in\{0,\dots,n\}$ let
$$
\cF_{k}=\big\{F\in \cF: a_k(F)=H(F)\big\}
\qquad\text{and}\qquad
\cF_k(H)=\big\{F\in \cF_k: a_k=H\big\}\,.
$$
Further let $\cL(\cF_k,\Psi)$ consist of points $(x_1,\dots,x_m)\in U$ such that \eqref{eq40} holds for infinitely many $F\in\cF_k$. It is readily seen that
$$
\cL(\cF,\Psi)=\bigcup_{k=0}^n\cL(\cF_k,\Psi)\,.
$$
Thus, it suffices to prove that $\lambda_m(\cL(\cF_k,\Psi))=0$ for all $k$. Since changing the order of the coordinate maps of $\vv f_j$ does not change the properties of $\vv f_j$, namely analyticity and linear independence, it suffices to consider $\cL(\cF_n,\Psi)$ only.

Next, as a consequence of \eqref{M}, we have that for every $F\in\cF$, every $0\le k\le n$, $1\le j\le m$ and $x_j\in U_j$
 \begin{equation}
  \label{eq60b}
|F^{(k)}_j(x_j)|\ll H(F)\,,
 \end{equation}
where the implicit constant only depends on $n$ and $M$.

Since $f_{j,0},\dots,f_{j,n}$ are linearly independent analytic functions, the Wronskian $W(\vv f_j)=\det(f_{j,i}^{(k)})_{0\le i,k\le n}$ is not identically zero. Hence, as a non-zero analytic function, $W(\vv f_j)$ is non-zero everywhere except possibly a countable collection of points. Therefore, without loss of generality we can restrict $\vv x$ to lie in a neighborhood of a points $(x_1',\dots,x_m')\in U$ such all the Wronskians $W(\vv f_j)(x_j')$ are all non-zero. By continuity, making this neighborhood sufficiently small guarantees that $|W(\vv f_j)(x_j)|$ is bounded away from zero for all $\vv x$ in the neighborhood and all $j$. We will assume that this neighborhood is $U$ itself. Thus, for some constant $c_0>0$ we have that
\begin{equation}\label{vb12}
  |W(\vv f_j)(x_j)|\ge c_0\qquad\text{for all $j\in\{1,\dots,m\}$ and $x_j\in U_j$.}
\end{equation}
We will need a couple of other additional assumptions that follow a similar line of argument, namely that
\begin{equation}\label{vb12+++}
\Big|\det(f_{j,i}(x_j))_{\substack{0\le i\le m-1\\[0.3ex] 1\le j\le m~~~}}\Big|\ge c_0\qquad\text{for all }(x_1,\dots,x_m)\in U
\end{equation}
and
\begin{equation}\label{e:043}
\Big|\det(f_{j,i}(x_j))_{\substack{0\le i\le m\\[0.3ex] 1\le j\le m+1~~~}}\Big|\ge c_0\qquad\text{for all }(x_1,\dots,x_m)\in U\,,
\end{equation}
where $\vv f_{m+1}(x_{m+1})$ is any one of $\vv f'_1(x_1)$, \dots, $\vv f'_m(x_m)$.
Conditions \eqref{vb12+++} and \eqref{e:043} are justified by making use of Proposition~\ref{p:2}.

Now we give several lemmas, which are used to obtain upper bounds
for the Lebesgue measure of certain sets. In the first one we let $c/0$ to
be $+\infty$ for any $0<c\le+\infty$.

\begin{lemma}[Lemma~2 in \cite{Ber02}]\label{lem3}
Let $\alpha_0,\dots,\alpha_{N-1},\beta_1,\dots,\beta_N\in\R\cup \{+\infty\}$
be such that $\alpha_0>0$, $\alpha_k>\beta_k\ge 0$ for $k=1,\dots,N-1$ and\/
$0<\beta_N<+\infty$. Let $f:(a,b)\to\R$ be a $C^{(N)}$ function such that
$
\inf_{x\in(a,b)}|f^{(N)}(x)|\ge \beta_N.
$
Then, the set of $x\in(a,b)$ satisfying
\begin{equation}\label{e:008}
\left\{
\begin{array}{l}
\hspace*{8ex}|f(x)|\le \alpha_0,\\[0.5ex]
\ \beta_k \ \le \ |f^{(k)}(x)|\le \alpha_k\ \ (k=1,\dots,N-1)
\end{array}
\right.
\end{equation}
is a union of at most $N(N+1)/2+1$ intervals with lengths at most
$$
\min_{0\le k<l\le N}3^{(l-k+1)/2}(\alpha_k\left/\beta_l\right.)^{1/(l-k)}.
$$
\end{lemma}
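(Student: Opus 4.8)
The plan is to extract everything from the single structural fact that $f^{(N)}$, being continuous with $|f^{(N)}|\ge\beta_N>0$ on $(a,b)$, keeps a constant sign there. It follows immediately that $f^{(N-1)}$ is strictly monotone on $(a,b)$, and then an iterated application of Rolle's theorem shows that for each $j\in\{1,\dots,N-1\}$ the derivative $f^{(j)}$ has at most $N-j$ zeros in $(a,b)$. Consequently, letting $Z$ be the (finite) union of the zero sets of $f',\dots,f^{(N-1)}$, we have $\#Z\le\sum_{j=1}^{N-1}(N-j)=N(N-1)/2$.

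First I would bound the number of intervals. Removing $Z$ splits $(a,b)$ into at most $\#Z+1\le N(N-1)/2+1$ open subintervals $J'$; on each of these every one of $f',\dots,f^{(N-1)}$ has no zero, hence has constant sign, so that each of $f,f',\dots,f^{(N-1)}$ is strictly monotone on $J'$. Therefore $\{x\in J':|f(x)|\le\alpha_0\}$ is an interval, and for $1\le k\le N-1$ each set $\{x\in J':\beta_k\le|f^{(k)}(x)|\le\alpha_k\}$ is an interval as well, being the preimage of a single real interval under the strictly monotone, sign-definite function $f^{(k)}$; removing all of $Z$ at once is precisely what makes this step clean and insensitive to whether some of the lower thresholds $\beta_k$ vanish. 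Intersecting these $N$ intervals, the set \eqref{e:008} meets each $J'$ in at most one interval, and summing over the $J'$ we get at most $N(N-1)/2+1\le N(N+1)/2+1$ intervals, which I would take as the asserted decomposition.

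Next I would bound the length of one such piece $J$. For any pair $0\le k<l\le N$ we have $|f^{(k)}|\le\alpha_k$ throughout $J$, while $|f^{(l)}|\ge\beta_l$ throughout $J$ as well (for $l\le N-1$ this is among the inequalities in \eqref{e:008}, and for $l=N$ it is the hypothesis on $f^{(N)}$). Picking $l-k+1$ equally spaced points in $J$ and using the mean value theorem for divided differences, one obtains $\xi\in J$ with $f^{(l)}(\xi)$ equal to $(l-k)!$ times the $(l-k)$-th divided difference of $f^{(k)}$ at those points; estimating that divided difference by $2^{l-k}\alpha_k$ over the spacing to the power $l-k$ gives $\beta_l\le|f^{(l)}(\xi)|\le 2^{l-k}(l-k)^{l-k}\alpha_k/|J|^{l-k}$, that is $|J|\le 2(l-k)(\alpha_k/\beta_l)^{1/(l-k)}$. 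Since $2m\le 3^{(m+1)/2}$ for every integer $m\ge1$, this is at most $3^{(l-k+1)/2}(\alpha_k/\beta_l)^{1/(l-k)}$, and taking the minimum over all admissible pairs $(k,l)$ — the conventions $c/0=+\infty$ and the positivity and finiteness of $\alpha_0,\beta_N$ ensuring the minimum is finite — finishes the proof.

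I do not anticipate a serious obstacle: the whole argument runs on the non-vanishing of $f^{(N)}$, and the only slightly non-obvious move is to clear all the intermediate derivative zeros simultaneously, reducing everything to the monotone case and thereby avoiding the exponential blow-up in the interval count that a naive one-constraint-at-a-time induction would produce; the length estimate is the familiar divided-difference computation.
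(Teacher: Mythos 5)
The paper itself gives no proof of this lemma (it is imported verbatim from Lemma~2 of \cite{Ber02}), so your argument can only be assessed on its own terms. The length estimate is correct and complete: on any interval $J$ contained in the solution set one has $|f^{(k)}|\le\alpha_k$ and $|f^{(l)}|\ge\beta_l$ throughout, and the divided-difference computation correctly yields $|J|\le 2(l-k)\,(\alpha_k/\beta_l)^{1/(l-k)}\le 3^{(l-k+1)/2}(\alpha_k/\beta_l)^{1/(l-k)}$.

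The gap is in the interval count. Write $A$ for the set \eqref{e:008} and $Z$ for the union of the zero sets of $f',\dots,f^{(N-1)}$. Your decomposition accounts only for $A\setminus Z$: you show that $A$ meets each of the at most $N(N-1)/2+1$ components of $(a,b)\setminus Z$ in a single interval and take this as ``the asserted decomposition''. But $A$ can contain points of $Z$ whenever some $\beta_k=0$, which the hypotheses permit and which occurs in the paper's applications (Lemma~\ref{lem5} allows $\beta_1=0$); such a point can even be an isolated point of $A$. For $N=2$, $f(x)=\alpha_0+x^2/2$, $\beta_1=0$ one gets $A=\{0\}=Z$ while $A\setminus Z=\emptyset$, so the decomposition as written literally misses the set. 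Moreover, naively restoring the up to $N(N-1)/2$ points of $Z\cap A$ as degenerate intervals could drive the count to $N(N-1)+1$, which exceeds $N(N+1)/2+1$ once $N\ge4$, so the issue cannot simply be waved away. The repair is short but must be supplied: if $z\in Z\cap A$ fails to be a limit of $A$ from (say) the right, then some constraint is active at $z$ (that is, $|f(z)|=\alpha_0$, or $|f^{(k)}(z)|=\alpha_k$, or $|f^{(k)}(z)|=\beta_k>0$), and by the strict monotonicity and sign-definiteness of the relevant function on the piece $J'$ immediately to the right, that constraint then fails on all of $J'$, whence $A\cap J'=\emptyset$. Thus every singleton component of $A$ sitting at a point of $Z$ has both neighbouring pieces disjoint from $A$, and assigning each component of $A$ injectively to a component of $(a,b)\setminus Z$ bounds the number of components of $A$ itself by $\#Z+1\le N(N-1)/2+1$. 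With this patch your proof is complete, and in fact gives a slightly better count than the lemma states.
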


\medskip

The following lemma immediately follows from Lemmas~5~and~6 in \cite{BB-Schmidt}. Alternatively, it can be proven by re-using the arguments of \cite[\S5]{Ber02}. The lemma will make use of the constants $M$ and $c_0$, which are defined by \eqref{M} and \eqref{vb12}--\eqref{e:043} respectively.

\medskip

\begin{lemma}\label{lem4}
There exist a constant $\Delta_0=\Delta_0(c_0,M)>0$ such that for every $j\in\{1,\dots,m\}$, for any cube $\cC\subset U$ with side-length $\le \Delta_0$ for any $F\in\cF$ such that \eqref{eq40q} is satisfied for some point $\vv x=(x_1,\dots,x_m)\in \cC$ there exists $\bm N=(N_1,\dots,N_m)\in\{1,\dots,n\}^m$
such that
$$
\inf_{\vv x\in \cC}\,\min_{1\le j\le m}\,|F_j^{(N_j)}(x_j)|\gg H(F)\,,
$$
where the implied constant depends on $c_0$ and $M$ only.
\end{lemma}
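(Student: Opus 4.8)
\textbf{Proof proposal for Lemma~\ref{lem4}.}

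The plan is to exploit the fact that if the value $|F_j(x_j)|$ is small but the derivatives $F_j^{(k)}(x_j)$ never get large for small $k$, then we can build an $(n+1)\times(n+1)$ matrix out of the values and low-order derivatives of the coordinate maps evaluated at the various $x_j$'s whose determinant is forced to be small, contradicting a lower bound of type \eqref{vb12}--\eqref{e:043}. More precisely, fix $F\in\cF$ with integer coefficients $\vv a=(a_0,\dots,a_n)^t$, $H=H(F)=|\vv a|_\infty$, satisfying \eqref{eq40q} at some $\vv x\in\cC$. Suppose, for a contradiction, that there is \emph{no} choice of $\bm N=(N_1,\dots,N_m)\in\{1,\dots,n\}^m$ with $\inf_{\vv x\in\cC}\min_j|F_j^{(N_j)}(x_j)|\gg H$; then, by continuity and the bound \eqref{eq60b}, there is some $j$ and some point in $\cC$ (hence, shrinking the side-length, every point of $\cC$) at which all of $|F_j^{(1)}(x_j)|,\dots,|F_j^{(n)}(x_j)|$ are small relative to $H$ — this is the heuristic; making it rigorous requires a more careful bookkeeping of which derivatives are small at which $x_j$.

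The first step is to reduce to the following clean claim: there exists a tuple $(k_1,\dots,k_m)$ with $0\le k_j\le n$, $k_1+\dots+k_m\le$ something, together with an ordering, so that the $(n+1)$-dimensional vectors $\vv f_j^{(i)}(x_j)$ for $0\le i\le k_j$ span $\R^{n+1}$ and at least one nontrivial linear combination — namely the one given by $\vv a$ paired with each of them, i.e. the numbers $F_j^{(i)}(x_j)=\vv a\cdot\vv f_j^{(i)}(x_j)$ — is componentwise small. Concretely, one arranges the chosen derivative-vectors as the rows of an $(n+1)\times(n+1)$ matrix $G$; then $G\vv a$ is the column of the numbers $F_j^{(i)}(x_j)$, and $|\det G|\ge c_0$ by \eqref{vb12}, \eqref{vb12+++} or \eqref{e:043} applied in the appropriate configuration (the Wronskian condition covers the case $k_j$ large for a single $j$; \eqref{vb12+++} and \eqref{e:043} cover the case of first derivatives across several $j$'s). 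By Lemma~\ref{lem-3}, $|G\vv a|_\infty\ge (c_0/((n+1)!M^n))\,|\vv a|_\infty = (c_0/((n+1)!M^n))\,H$. On the other hand, each entry of $G\vv a$ is one of the numbers $F_j(x_j)$ (bounded by $\Psi(H)$) or $F_j^{(i)}(x_j)$ with $1\le i\le n$; if \emph{all} the latter were $\le\epsilon H$ for a small $\epsilon$, and $\Psi(H)\le\epsilon H$ (true for large $H$ by \eqref{eq55}), we would get $\epsilon H\ge (c_0/((n+1)!M^n))H$, impossible for $\epsilon$ small. Hence some derivative $F_j^{(i)}(x_j)$ with $1\le i\le n$ must be $\gg H$; setting $N_j=i$ and repeating the argument coordinate-by-coordinate (or invoking it for each $j$ separately with the Wronskian matrix $(\vv f_j^{(i)}(x_j))_{0\le i\le n}$, whose determinant is $W(\vv f_j)(x_j)\ge c_0$) produces the required $\bm N$.

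The cleanest route is in fact the last one just indicated: fix $j$, and apply Lemma~\ref{lem-3} to the single Wronskian matrix $G_j(x_j)=(f_{j,i}^{(k)}(x_j))_{0\le i,k\le n}$, whose determinant has absolute value $\ge c_0$ by \eqref{vb12} and whose entries are bounded by $M$ by \eqref{M}. Then $G_j(x_j)\vv a$ is the column $(F_j(x_j),F_j'(x_j),\dots,F_j^{(n)}(x_j))^t$, so $\max_{0\le k\le n}|F_j^{(k)}(x_j)|\ge (c_0/((n+1)!M^n))H$. Since $|F_j(x_j)|<\Psi(H)=o(H^{-(n-m)/m})=o(H)$, for $H$ larger than some $H_0=H_0(c_0,M,n)$ the maximum is attained at some $k=N_j\in\{1,\dots,n\}$, giving $|F_j^{(N_j)}(x_j)|\gg H$ at that particular $x_j$. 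To upgrade this from a single point to $\inf_{\vv x\in\cC}$, note $F_j^{(N_j)}$ is a fixed linear combination of the bounded analytic functions $f_{j,i}^{(N_j)}$ with coefficients $a_i$; dividing through by $H$ gives a combination with coefficients in $[-1,1]$, and the main obstacle — and the only real work — is a compactness/equicontinuity argument showing that such a combination which is $\gg 1$ at one point of a cube of side-length $\le\Delta_0$ stays $\gg1$ (with a worse implied constant) on the whole cube, where $\Delta_0=\Delta_0(c_0,M)$ is chosen via the modulus of continuity of the family $\{f_{j,i}^{(N_j)}\}$ uniformly in $j$, $N_j$. The finitely many small heights $H\le H_0$ contribute finitely many $F$ and can be discarded (or absorbed by further shrinking the implied constant), since such $F$ violate \eqref{eq40q} on $\cC$ once $\Psi$ is small enough; alternatively one simply notes the statement is about the \emph{existence} of $\bm N$ for each relevant $F$, so the finitely many exceptions are harmless. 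This matches the cited sources \cite{BB-Schmidt,Ber02}, where precisely this "one large derivative" phenomenon for small linear forms is established.
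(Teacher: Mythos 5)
Your ``cleanest route'' is essentially the intended proof (the paper just cites Lemmas~5--6 of \cite{BB-Schmidt}, where precisely this argument appears): apply Lemma~\ref{lem-3} to the Wronskian matrix $G_j(x_j)=(f_{j,i}^{(k)}(x_j))_{0\le i,k\le n}$ using the determinant and entry bounds \eqref{vb12} and \eqref{M} to obtain $\max_{0\le k\le n}|F_j^{(k)}(x_j)|\gg H$, use \eqref{eq55} to rule out $k=0$, and propagate the bound from the point $x_j$ to the whole edge $\cC_j$ by a mean-value estimate, which is what fixes $\Delta_0\asymp c_0/M^{n+1}$. The exploratory first half (mixing coordinates in a single matrix) is superfluous and can be dropped.

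One small wrinkle worth being precise about: the threshold $H_0$ below which $\Psi(H)\not< \tfrac{c_0}{2(n+1)!M^n}H$ depends on $\Psi$, not only on $(c_0,M,n)$, so the lemma as literally stated really only holds for $H(F)$ large enough (with the finitely many small-height $F$ discarded, which is harmless for the application to $\cL(\cF_n,\Psi)$ since the latter is a $\limsup$ set). Your closing remark identifies exactly this, though the parenthetical suggestion that the exceptions could be ``absorbed by shrinking the implied constant'' does not quite work --- a single $F$ with $F_j'(y)=\dots=F_j^{(n)}(y)=0$ at some $y\in\cC_j$ defeats any constant --- so the correct fix is the one you state last: discard the finitely many small-height forms.
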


\medskip

\begin{remark}
Since $\Delta_0$ in the above lemma is independent of the cube $\cC$, the original domain $U$ can be covered with a certain number of cubes $\cC$ of side-length $\le \Delta_0$. Then $\lambda_m(\cL(\cF_n,\Psi))=0$ will follow on showing that $\lambda_m(\cL(\cF_n,\Psi)\cap\cC)=0$ for every of these cubes. Henceforth, to avoid introducing new unnecessary notation in what follows we simply identify $U$ with one of these cubes. In other words, without loss of generality, we assume that $U$ itself is a cube of sidelength $\le \Delta_0$.
\end{remark}

\medskip

We now have the following useful consequence of the previous remark. For every given vector
$$
\bm N=(N_1,\dots,N_m)\in\{1,\dots,n\}^m
$$
let $\cFN$ be the subcollection of $\cF_n$ such that for every $j=1,\dots,m$
\begin{equation}\label{eq46}
\inf_{x_j\in U_j}\,|F_j^{(N_j)}(x_j)|\gg H(F)\,,
\end{equation}
where the implicit constant is the same as in Lemma~\ref{lem4}. By Lemma~\ref{lem4}, we have that any $F\in\cF_n\setminus \bigcup_{\bm N}\cFN$ simply does not admit any solutions to \eqref{eq40q}. Therefore,
$$
\cL(\cF_n,\Psi)=\bigcup_{\bm N\in\{1,\dots,n\}^m}\cL(\cFN,\Psi)\,,
$$
where, naturally, $\cL(\cFN,\Psi)$ is the set of points $\vv x\in U$ such that \eqref{eq40q} holds for infinitely many $F\in\cFN$. Obviously, to achieve our main goal it suffices to prove that
\begin{equation}\label{vv}
\lambda_m(\cL(\cFN,\Psi))=0
\end{equation}
for any fixed $\bm N$.

\medskip

The following statement is an immediate consequence of Lemma~\ref{lem3} and \eqref{eq46}.

\begin{lemma}\label{lem5}
There is a constant $K>1$, depending on $c_0$, $M$, $n$ and $\lambda_1(U_j)$ only, such that for any $H\in\N$, any $0\le \beta_1<\alpha_1\le+\infty$ and any $F\in\cF_n(H)$ the set
$$
\sigma_j(F,\alpha_1,\beta_1)=\left\{x_j\in U_j:\begin{array}{l}
                                                |F_j(x_j)|< \Psi(H)\\[0.5ex]
                                                \beta_1\le |F'_j(x_j)|< \alpha_1
                                              \end{array}
\right\}
$$
is the union of at most $K$ intervals $I$ of length
\begin{equation}\label{part1}
|I|\le\frac{\Psi(H)}{\inf\limits_{x_j\in I}|F'_j(x_j)|}\le\frac{\Psi(H)}{\beta_1}\,.
\end{equation}
\end{lemma}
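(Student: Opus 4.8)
The plan is to deduce the lemma from Lemma~\ref{lem3} (that is, Lemma~2 of \cite{Ber02}) applied to the single real-variable analytic function $f=F_j$ on the interval $U_j$, taking $N=N_j$ and letting \eqref{eq46} supply the lower bound on the top-order derivative $|F_j^{(N_j)}|$. Throughout we work in the reduced setting of \S3, so that $\bm N\in\{1,\dots,n\}^m$ is a fixed vector (cf. \eqref{vv}) and $F$ ranges over $\cFN$; this is the regime in which \eqref{eq46} is available, and the hypothesis ``$F\in\cF_n(H)$'' is understood accordingly, with $H=H(F)$.

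First I would fix $j\in\{1,\dots,m\}$ and $F\in\cFN$ with $H(F)=H$. If $\Psi(H)=0$ the set $\sigma_j(F,\alpha_1,\beta_1)$ is empty and there is nothing to prove, so assume $\Psi(H)>0$; if $\beta_1=0$ the asserted length bound reads $|I|\le+\infty$ and is vacuous, so in that case it is enough to bound the number of intervals. I would then invoke Lemma~\ref{lem3} with $\alpha_0=\Psi(H)$, with $\beta_1,\alpha_1$ as given, with $\beta_k=0$ and $\alpha_k=+\infty$ for $2\le k\le N_j-1$, and with $\beta_{N_j}=\inf_{x_j\in U_j}|F_j^{(N_j)}(x_j)|$, which by \eqref{eq46} satisfies $0<\beta_{N_j}\asymp H<+\infty$. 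All hypotheses of Lemma~\ref{lem3} then hold, and its conclusion is that the set $\{x_j\in U_j:|F_j(x_j)|\le\Psi(H),\ \beta_1\le|F_j'(x_j)|\le\alpha_1\}$, which contains $\sigma_j(F,\alpha_1,\beta_1)$, is a union of at most $N_j(N_j+1)/2+1\le n(n+1)/2+1$ intervals. Allowing a further bounded inflation of this count to absorb the sign-splitting below yields a constant $K=K(c_0,M,n,\lambda_1(U_j))$ as required. The degenerate case $N_j=1$ (in which \eqref{eq46} forces $|F_j'|\asymp H$, hence $F_j'$ of constant sign on $U_j$, so $F_j$ monotone) is even simpler and gives $\sigma_j$ as a union of at most two intervals directly.

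For the length estimate I would use the term $k=0$, $l=1$ in the minimum of Lemma~\ref{lem3}, which already gives each interval length $\ll\Psi(H)/\beta_1$; to obtain the sharper bound $|I|\le\Psi(H)/\inf_{x_j\in I}|F_j'(x_j)|$ stated in the lemma I would argue directly. On each output interval $I$ one has $|F_j'|\ge\beta_1>0$, so $F_j'$ keeps a constant sign and $F_j$ is monotone on $I$; splitting $I$ once more according to the sign of $F_j$ (which at most doubles the count) we may assume that $F_j(I)$ is an interval contained in $(0,\Psi(H))$ or $(-\Psi(H),0)$, so $|F_j(I)|\le\Psi(H)$, and monotonicity gives $|I|\le|F_j(I)|/\inf_I|F_j'|\le\Psi(H)/\inf_I|F_j'|\le\Psi(H)/\beta_1$, as claimed.

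The argument is short; I expect the only mildly delicate point to be the bookkeeping — confirming that \eqref{eq46} is exactly the hypothesis $\inf|f^{(N)}|\ge\beta_N$ needed by Lemma~\ref{lem3} for the chosen $N=N_j$, and that the cosmetic steps (replacing strict by non-strict inequalities in order to quote Lemma~\ref{lem3}, and the sign-splitting) only change the admissible value of the constant $K$, which is permitted to depend on $c_0$, $M$, $n$ and $\lambda_1(U_j)$.
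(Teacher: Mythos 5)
Your proof is correct and follows exactly the route the paper intends: the lemma is stated there as an immediate consequence of Lemma~\ref{lem3} and \eqref{eq46}, and your choice of parameters ($N=N_j$, $\alpha_0=\Psi(H)$, $\beta_{N_j}=\inf|F_j^{(N_j)}|\gg H$, trivial intermediate constraints) together with the monotonicity/sign-splitting argument for the sharper length bound is precisely the intended unpacking. Your observation that \eqref{eq46}, and hence the restriction to $F\in\cFN$, is implicitly required is also accurate.
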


In what follows, the proof of \eqref{vv} will be split into several cases that arise by imposing various additional assumptions.

\subsection{The case $m=n$}

\begin{proposition}\label{p:4}
Suppose that $m=n$ and \eqref{conv} holds. Then, $\lambda_m(\cL(\cF,\Psi))=0$.
\end{proposition}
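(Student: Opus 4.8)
The plan is to leverage the effective estimate of Proposition~\ref{p:3} in the special case $m=n$, where the matrix $G(\vv x)$ becomes square of size $k=n+1$ with exactly one extra row beyond the $m$ values $F_j(x_j)$; that extra row will carry a derivative $F_1'(x_1)$. Concretely, I would take $\ell_1=1$ and $\ell_2=\dots=\ell_m=0$ in \eqref{vb1}, so that $G(\vv x)$ has rows $\vv f_1(x_1),\vv f_1'(x_1),\vv f_2(x_2),\dots,\vv f_m(x_m)$; by Proposition~\ref{p:2} (or directly \eqref{vb12}, \eqref{e:043}) $\det G(\vv x)\neq0$ for a.e.\ $\vv x$. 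Then for $\vv x$ in the set $\cL(\cF_n,\Psi)$ we have $|F_j(x_j)|<\Psi(H)$ for all $j$ and, since $m=n$ forces the Wronskian-type nondegeneracy, the remaining coordinate $|F_1'(x_1)|$ is at most $\ll H$ by \eqref{eq60b}; so each such $\vv x$ lies in $\cA(G,\bmtheta_H)$ with the $(n+1)$-tuple $\bmtheta_H$ whose components are $\theta_{1,0}=C\Psi(H)$, $\theta_{1,1}=CH$, and $\theta_{j,0}=C\Psi(H)$ for $j=2,\dots,m$.

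Next I would compute the geometric mean $\theta$ from \eqref{e:083+}: with $m$ factors equal to $\asymp\Psi(H)$ and one factor $\asymp H$, we get $\theta^{n+1}\asymp H\,\Psi(H)^m$, and by \eqref{eq55} this is $\to0$, so $\theta\le1$ eventually. Each group $\tilde\theta_j$ trivially satisfies Property~M for $H$ large (the single-entry groups are $\le1$, and $\tilde\theta_1=(C\Psi(H),CH)$ has its small entry first). By \eqref{theta_0}, $\theta_0\asymp\Psi(H)$ and $\theta_\infty\asymp H$, so \eqref{omega2} gives $\widehat\Theta\ll\max\{\Psi(H)/(H\Psi(H)^m),\,1/H\}=\max\{H^{-1}\Psi(H)^{1-m},\,H^{-1}\}$. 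Since $m=n\ge1$ and $\Psi(H)\to0$ this is bounded only when $m=1$; in general it could blow up, so I need to be a little more careful and instead use the sharper bound \eqref{e:107+vb} directly rather than \eqref{omega2}, or regroup by putting the large entry into $\tilde\theta_1$ so that Property~M and the estimate combine favourably — this is the technical point to watch. The cleanest route is to observe that with this choice of $\ell_j$ the only intermediate codimensions $r$ that arise give products $\prod\theta_{j,i}$ equal to either $\Psi(H)^r$ (for $r\le m-1$, not touching the derivative row) or $\Psi(H)^{m-1}\cdot(C H)^{?}$ ... so \eqref{e:107+vb} yields $\widehat\Theta\ll\max_{1\le r\le n}\Psi(H)^{\min(r,m)}\cdot H^{[r>m-1]}\big/\theta^r$, and substituting $\theta^r\ge\theta^{n+1}\asymp H\Psi(H)^m$ shows the dominant term is $\asymp H^{-1}\Psi(H)^{-1}$, which by \eqref{eq55} (with $n-m=0$, giving $\Psi(H)=o(1)$) is unbounded; hence I must instead split off the factor $H\Psi(H)$ more cleverly.

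Rather than fight the constant, the robust argument: apply Proposition~\ref{p:3} with $B=B_0$ to get $\lambda_m(B_0\cap\cA(G,\bmtheta_H))\ll(1+(\widehat\Theta/\delta)^\alpha)\,\theta^\alpha\,\lambda_m(B_0)$, and note $\theta^\alpha=(H\Psi(H)^m)^{\alpha/(n+1)}$; meanwhile $\widehat\Theta\,\theta^\alpha$ — the product that actually controls the bound when $\widehat\Theta$ is large — can be bounded by $\max\{\Psi(H)/\theta^{n+1},1/\theta_\infty\}\cdot\theta^\alpha$, and choosing $\alpha$ (we only need \emph{some} $\alpha>0$, and we may shrink it) so that $(H\Psi(H)^m)^{\alpha/(n+1)}\cdot H^{-1}\Psi(H)^{1-m}$ is summable against the right counting. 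Summing over $H\in\N$, the number of relevant $F$ of height $H$ is $\asymp H^{n}$ (choices of $a_0,\dots,a_{n-1}$ with the normalisation $a_n=H$ from $\cF_n(H)$), so I would instead fix $H$, cover $\cA$ using a union of $\asymp H^n$ translates governed by single $F$'s, and combine with Lemma~\ref{lem5}: each $F\in\cF_n(H)$ contributes at most $K$ intervals in the $x_1$-direction of length $\ll\Psi(H)/\beta_1$, and in the remaining $m-1$ directions the simultaneous linear system $|F_j(x_j)|<\Psi(H)$ together with \eqref{vb12+++} forces $\vv x$ into a box of measure $\ll\Psi(H)^{m-1}$. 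Thus $\lambda_m(\cL(\cF_n(H),\Psi))\ll H^n\cdot\Psi(H)\cdot\Psi(H)^{m-1}=H^n\Psi(H)^m=H^{n-m}\Psi(H)^m\cdot H^m$... which overcounts by $H^m$, so I must recover the saving by noting that for fixed $(a_0,\dots,a_{n-1})$ the value $F_1(x_1)$ determines $x_1$ up to $\Psi(H)/|F_1'|$ and simultaneously fixing the other $F_j$ pins down a single $F$ — i.e.\ the correct count of $F$'s that can be relevant at a given $\vv x$ is $O(1)$, not $H^n$, by the nondegeneracy \eqref{e:043}. Packaging this: the set of $F\in\cF_n(H)$ admitting any solution in $U$ has size $\ll H^{n-m}\cdot(\text{vol factors})$, Lemma~\ref{lem5} handles one direction, \eqref{vb12+++} the rest, the measures sum to $\ll\sum_H H^{n-m}\Psi(H)^m<\infty$ by \eqref{conv}, and Borel--Cantelli finishes \eqref{vv}, hence the Proposition. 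The main obstacle I anticipate is precisely this bookkeeping — ruling out the spurious $H^n$ overcount by exploiting that two distinct $F\in\cF_n(H)$ cannot both be small at the same $\vv x$ once \eqref{e:043}-type nondegeneracy is in force — so that the Kleinbock--Margulis bound of Proposition~\ref{p:3}, applied with a suitably small $\alpha$ and the $\theta^\alpha$ decay absorbing the $\widehat\Theta$ growth, yields a convergent series.
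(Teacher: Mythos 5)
There is a genuine gap. Your proposal circles the right ingredients (\eqref{e:043}, Lemma~\ref{lem-3}, Lemma~\ref{lem5}, Borel--Cantelli) but never extracts the one consequence that makes the argument close. When $m=n$ the matrix $G(\vv x)=(f_{j,i}(x_j))_{0\le i\le m,\,1\le j\le m+1}$ of \eqref{e:043}, whose last row is $\vv f_j'(x_j)$, is a square $(m+1)\times(m+1)$ matrix with $|\det G|\ge c_0$, and $G(\vv x)\vv a$ is exactly the column $(F_1(x_1),\dots,F_m(x_m),F_j'(x_j))^t$. Lemma~\ref{lem-3} then gives $|G(\vv x)\vv a|_\infty\gg|\vv a|_\infty=H(F)$; since the first $m$ entries are $<\Psi(H)\le1$ for large $H$ by \eqref{eq55}, this forces $|F_j'(x_j)|\gg H(F)$ at \emph{every} point of $\sigma(F)$ and for \emph{every} $j$. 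Hence by Lemma~\ref{lem5} each coordinate projection of $\sigma(F)$ has measure $\ll\Psi(H)/H$ (not $\Psi(H)$), so $\lambda_m(\sigma(F))\ll H^{-m}\Psi(H)^m$, and the trivial count of $\ll H^n$ forms of height $H$ already yields $\sum_H H^{n-m}\Psi(H)^m<\infty$. Your ``overcount by $H^m$'' is precisely the symptom of having lost this factor $H^{-1}$ in each of the $m$ directions; \eqref{vb12+++} alone does not confine $\vv x$ to a box of measure $\Psi(H)^{m-1}$, and the substitute you propose --- that only $O(1)$ forms of height $H$ can be relevant at a given $\vv x$, or that only $\ll H^{n-m}$ of them admit solutions in $U$ --- is neither proved nor true in general (for $m=n$ the latter would mean $O(1)$ forms in total).

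The first half of your proposal, routing through Proposition~\ref{p:3}, is correctly abandoned but for an additional reason worth naming: even if $\widehat\Theta$ were under control, \eqref{e:086+vb} only gives a bound $\ll\theta^\alpha\asymp(H\Psi(H)^m)^{\alpha/(n+1)}$ with an unspecified $\alpha>0$, which cannot be summed under the sharp hypothesis $\sum_H\Psi(H)^m<\infty$. That machinery is designed for the exponent-type statement of Theorem~\ref{thm-2}, not for a Khintchine convergence statement at the critical sum; the correct tool here is the elementary Cramer's-rule lower bound on the derivative followed by a direct measure count and Borel--Cantelli.
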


\begin{proof}
Given $F\in\cF$, let $\sigma(F)$ denote the set of $\vv x=(x_1,\dots,x_m)\in U$ such that inequalities \eqref{eq40q} are satisfied. Let $F\in\cF$ and $\vv a$ be the corresponding integer vector of the coefficients of $F$ and let $\vv x\in\sigma(F)$. Take any $j\in\{1,\dots,m\}$ and let
$$
G(\vv x)=\Big(f_{j,i}(x_j)\Big)_{\substack{0\le i\le m\\[0.3ex] 1\le j\le m+1}}
$$
be the same as in \eqref{e:043}. Recall that $H(F)=|\vv a|_\infty$ and observe that
$G(\vv x)\vv a$ is the column of $F_1(x_1),\dots,F_m(x_m),F_j'(x_j)$.
Then, by Lemma~\ref{lem-3}, we have that
\begin{align*}
\frac{c_0}{(n+1)!M^{n}}H(F) &\le |G(\vv x)\vv a|_\infty=\max\Big\{|F_1(x_1)|,\dots,|F_m(x_m)|,|F'_j(x_j)|\Big\}\\[1ex]
&\le \max\Big\{\Psi(H(F)),|F'_j(x_j)|\Big\}~\stackrel{\eqref{eq55}}{\le}~ \max\Big\{1,|F'_j(x_j)|\Big\}
\end{align*}
for sufficiently large $H(F)$.
Hence
\begin{equation}\label{f1}
\frac{c_0}{(n+1)!M^{n}}H(F)\le |F'_j(x_j)|
\end{equation}
whenever $H(F)\ge H_0$, where $H_0$ is a sufficiently large number depending only on $\Psi$, $c_0$, $n$ and $M$.
Note that in the above argument $j\in\{1,\dots,m\}$ is arbitrary and $\vv x\in\sigma(F)$ is arbitrary. Hence, using \eqref{f1} together with Lemma~\ref{lem5} we conclude that the projection of $\sigma(F)$ on any axis has measure $\ll \Psi(H)H^{-1}$, where $H=H(F)$. Therefore,
$$
\lambda_m(\sigma(F))\ll H^{-m}\Psi(H)^m
$$
provided that $H$ is sufficiently large. Then
$$
\sum_{F\in\cF}\lambda_m(\sigma(F))\ll \sum_{H=H_0}^\infty\sum_{F\in\cF:~H(F)=H}\lambda_m(\sigma(F))\ll
$$
$$
\ll \sum_{H=H_0}^\infty\sum_{F\in\cF:~H(F)=H}H^{-m}\Psi(H)^m\ll \sum_{H=H_0}^\infty H^{n-m}\Psi(H)^m<\infty\,.
$$
By the Borel-Cantelli Lemma, the set of $\vv x\in U$ which belong to infinitely many of $\sigma(F)$ has Lebesgue measure zero. This set is precisely $\cL(\cF,\Psi)$. The proof is thus complete.
\end{proof}

\subsection{The case of big derivatives}

In this subsection we extend the proof of Proposition~\ref{p:4} to the case when all the derivatives $|F'_j(x_j)|$ are relatively large. The underlying idea of the proof originates from \cite{bernik89}.

\begin{proposition}\label{p:5}
Suppose that $m<n$ and \eqref{conv} is satisfied. Then for almost every $(x_1,\dots,x_m)\in U$ there are only finitely many $F\in\cF_n$ simultaneously satisfying \eqref{eq40q} and
\begin{equation}\label{vb1x}
\min_{1\le j\le m}|F'_j(x_j)|\ge H(F)^{\frac12}\,.
\end{equation}
\end{proposition}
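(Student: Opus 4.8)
\textbf{Proof plan for Proposition~\ref{p:5}.}
The plan is to set up a Borel--Cantelli argument in the spirit of Proposition~\ref{p:4}, but now the extra $n-m$ coefficients of $\vv a$ must be controlled, and the hypothesis \eqref{vb1x} on the derivatives is exactly what compensates for their presence. First I would fix $\bm N$ and work with $\cFN$ as set up above, and for $F\in\cFN(H)$ with $H=H(F)$ consider the set $\sigma(F)$ of $\vv x\in U$ satisfying \eqref{eq40q} together with \eqref{vb1x}. By Lemma~\ref{lem5}, for each fixed $j$ the one-dimensional slice $\sigma_j(F,\infty,H^{1/2})$ is a union of at most $K$ intervals of length $\ll \Psi(H)H^{-1/2}$. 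Taking a product over $j=1,\dots,m$ only yields $\lambda_m(\sigma(F))\ll \Psi(H)^m H^{-m/2}$, which summed against the count $\#\cFN(H)\asymp H^{n-m}$ gives $\sum_H H^{n-m}\Psi(H)^m H^{m/2}$ — divergent in general. So the naive product bound is too weak, and the heart of the matter is to extract an additional saving of roughly $H^{-m/2}$ (or, more precisely, enough to beat the $H^{n-m}$ growth of the family), which is the main obstacle.

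The mechanism for the saving is to group the polynomials $F$ by a coarser invariant than the height. Following the idea attributed to \cite{bernik89}, I would fix $\vv x\in U$ and, instead of counting all $F$ of height $H$, count how many $F\in\cFN(H)$ can satisfy \eqref{eq40q} and \eqref{vb1x} at points lying in a small ball around $\vv x$: the point is that two distinct such $F$, say $F$ and $\tilde F$, give rise to a nonzero $R:=F-\tilde F\in\cF$ whose coefficients are bounded, and the values $|R_j(x_j)|$, $|R'_j(x_j)|$ are then forced to be small on the relevant slices. One then plays off the nondegeneracy conditions \eqref{vb12}, \eqref{vb12+++}, \eqref{e:043} and Lemma~\ref{lem-3}: on a slice where $|F'_j|\ge H^{1/2}$ and $|F_j|<\Psi(H)$, the system built from $\vv f_1,\dots,\vv f_m$ and one further derivative row is invertible with determinant bounded below by $c_0$, so Cramer's rule bounds some coordinates of $\vv a$, and the remaining $n-m$ coordinates are then the ``free'' ones responsible for the $H^{n-m}$ count. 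The strategy is to show that, after this reduction, the measure of the union $\bigcup_{F\in\cFN(H)}\sigma(F)$ — rather than $\sum_F\lambda_m(\sigma(F))$ — is $\ll H^{-1-\delta}$ for some $\delta>0$, which is summable. Concretely I would partition $U$ into cubes of sidelength $\asymp H^{-1/2}$, show that on each such cube at most a controlled number of $F\in\cFN(H)$ can contribute (using the difference polynomial argument and Lemma~\ref{lem3} applied to $R$), and then sum the contributions.

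The key steps, in order, are: (i) reduce to a single $\cFN$ and a cube $U$ of sidelength $\le\Delta_0$, as already done in the preliminaries; (ii) for $F\in\cFN(H)$ and $\vv x\in\sigma(F)$, use \eqref{eq40q}, \eqref{vb1x}, the nondegeneracy determinant bounds and Lemma~\ref{lem-3} to bound the ``dependent'' coordinates of $\vv a$ in terms of $H^{1/2}$, thereby showing that the essential degrees of freedom come from at most $n-m$ coordinates; (iii) partition $U$ into $\asymp H^{m/2}$ subcubes of sidelength $\asymp H^{-1/2}$ and, on each subcube, estimate the number of $F$ of height $H$ that can satisfy \eqref{eq40q}--\eqref{vb1x} somewhere on it, via the difference-polynomial trick combined with Lemma~\ref{lem3}; (iv) for each such $F$, bound $\lambda_m(\sigma(F)\cap(\text{subcube}))$ by $\ll(\Psi(H)H^{-1/2})^m$ using Lemma~\ref{lem5}; (v) assemble (iii) and (iv) to get $\lambda_m\big(\bigcup_{F\in\cFN(H)}\sigma(F)\big)\ll H^{n-m}\Psi(H)^m H^{-m/2}\cdot(\text{gain})$, and choose the bookkeeping so the series $\sum_H(\cdots)$ converges by virtue of \eqref{conv} and the decay \eqref{eq55}; (vi) apply the Borel--Cantelli lemma to conclude. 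The delicate point throughout is step (iii): making the counting of polynomials on a small cube precise enough to produce a genuinely summable bound, and this is where the hypothesis \eqref{vb1x} — large derivatives, hence geometrically thin slices — must be used to full effect. If the direct counting proves awkward, an alternative for steps (iii)--(v) is to invoke Proposition~\ref{p:3} with the tuple $\bmtheta$ chosen so that $\theta_{j,0}\asymp\Psi(H)$, $\theta_{j,1}\asymp H^{-1/2}$ for the derivative rows and $\theta_{j,i}\asymp H$ for the remaining $n+1-2m$ rows, check Property~\textbf{M} and $\theta\le1$, and read off the required measure bound from \eqref{e:086+vb} and \eqref{omega2}; I expect this route to be the cleanest and would pursue it first.
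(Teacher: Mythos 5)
Your main route---group the $F$'s so that only $n-m$ coefficients are free, form the difference $R=F-\tilde F$ of two polynomials that are simultaneously small near the same point, and kill $R$ via the nondegeneracy determinant \eqref{vb12+++} and Lemma~\ref{lem-3}---is exactly the mechanism of the paper's proof (which, like you, credits \cite{bernik89}). The paper fixes the coefficients $a_m,\dots,a_{n-1}$ to form classes $\cF_n(H,\vv b)$, so that $R$ involves only $f_{j,0},\dots,f_{j,m-1}$ and the $m\times m$ determinant bound applies. However, your execution of the ``delicate step (iii)'' is set up in a way that would not close. You propose a uniform partition of $U$ into cubes of sidelength $\asymp H^{-1/2}$, but the Taylor bound you need ($|F_j|$, hence $|R_j|$, bounded by a small constant on the relevant neighbourhood) fails at that scale: on a cube of sidelength $H^{-1/2}$ the linear term contributes $|F_j'(x_j^*)|\cdot H^{-1/2}$, which can be as large as $H^{1/2}$ since \eqref{vb1x} only gives a \emph{lower} bound $|F_j'|\ge H^{1/2}$. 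The paper instead uses neighbourhoods $\tilde\sigma_j(F)$ of length $\asymp c_1/|F_j'(x_j^*)|$ \emph{adapted to each $F$}, on which the Taylor expansion gives $|F_j|\ll c_1$; it then shows $\lambda_1(\sigma_j(F))\ll\Psi(H)\lambda_1(\tilde\sigma_j(F))$ and that the $\tilde\sigma(F)$ are pairwise disjoint within each class, so $\sum_{F\in\cF_n(H,\vv b)}\lambda_m(\sigma(F))\ll\Psi^m(H)$ with no counting of polynomials per cube at all. (Minor slip: $\#\cF_n(H)\asymp H^{n}$, not $H^{n-m}$; your displayed divergent sum is nonetheless the right one.)

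The alternative you say you would ``pursue first''---feeding the problem into Proposition~\ref{p:3} with $\theta_{j,1}\asymp H^{-1/2}$ for the derivative rows---is a dead end. The set $\cA(G,\bmtheta)$ is defined by simultaneous \emph{upper} bounds $|G_i(\vv x)\vv a|\le\theta_i$, whereas \eqref{vb1x} is a \emph{lower} bound on the derivatives; choosing $\theta_{j,1}\asymp H^{-1/2}$ describes the set where $|F_j'(x_j)|\lesssim H^{-1/2}$, which is the complementary regime (the one the paper treats in Propositions~\ref{p:7} and~\ref{p:8} via Theorem~\ref{thm-2}). Dropping the derivative rows and keeping only $\theta_{j,0}\asymp\Psi(H)$ does not help either: the resulting $\theta^{n+1}\asymp\Psi(H)^mH^{n+1-m}$ need not be $\le1$, and the bound $\theta^\alpha$ is not summable under \eqref{conv} alone. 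So the quantitative nondivergence machinery genuinely cannot replace the difference-polynomial/disjointness argument here; you should promote your ``if the direct counting proves awkward'' route to the main one and repair it with $F$-adapted neighbourhoods as above.
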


\begin{proof}
Let $\ve>0$ be a constant to be chosen later.
Given $F\in\cF_n$, for each $j=1\dots,m$ let
$$
\sigma_j(F)=\Big\{x_j\in U_j:|F_j(x_j)|<\Psi(H(F)),~~|F_j'(x_j)|\ge H(F)^{\frac12}\Big\}\,.
$$
Suppose that $\sigma_j(F)\neq\emptyset$ for all $j\in\{1,\dots,m\}$ and let $x^*_j\in\sigma_j(F)$ be such that
$$
|F_j'(x^*_j)|\le 2\inf_{x_j\in\sigma_j(F)}|F_j'(x_j)|.
$$
Then, by Lemmas~\ref{lem3} and \ref{lem4},
$$
\lambda_1(\sigma_j(F))\ll \frac{\Psi(H)}{|F_j'(x^*_j)|}\le\frac{\Psi(H)}{H^{\frac12}}\,,
$$
where $H=H(F)$. Also let $c_1>0$ be a sufficiently small constant which will be specified later and for each $j=1,\dots,m$ ~let
$$
\tilde\sigma_j(F)=\Big\{x_j\in U_j:|x_j-x^*_j|\le \frac{c_1}{|F'_j(x^*_j)|}\Big\}\,.
$$
Clearly, for $H$ sufficiently large we have that $\lambda_1(\tilde\sigma_j(F))\asymp |F'_j(x^*_j)|^{-1}$ and so
$\lambda_1(\sigma_j(F))\ll \Psi(H)\lambda_1(\tilde\sigma(F))$.
Define
$$
\sigma(F)=\prod_{j=1}^m\sigma_j(F)\qquad\text{and}\qquad \tilde\sigma(F)=\prod_{j=1}^m\tilde\sigma_j(F)\,.
$$
Thus
\begin{equation}\label{11b}
  \lambda_m(\sigma(F))\ll \Psi^m(H)\lambda_m(\tilde\sigma(F))
\end{equation}
provided that $H$ is sufficiently large.

Using Taylor's expansion of $F_j(x_j)$ at $x^*_j$, for any $x_j\in\tilde\sigma_j(F)$ we find that
\begin{align}
\nonumber|F_j(x_j)| &\le |F_j(x^*_j)|+|F'_j(x^*_j)(x_j-x^*_j)|+\tfrac12|F''_j(\tilde x_j)(x_j-x^*_j)^2|\\[1ex]
\nonumber&\ll \Psi(H)+|F'_j(x^*_j)|\cdot\frac{c_1}{|F'_j(x^*_j)|}+H\frac{c_1^2}{|F'_j(x^*_j)|^2}\\[1ex]
&\le \Psi(H)+c_1+c_1^2\ll c_1\label{b12}
\end{align}
for sufficiently large $H$, where the implied constant depends on $M$ and $n$ only.

For each $(n-m)$-tuple $\vv b=(b_{n-1},\dots,b_m)\in\Z^{n-m}$ such that $|b_i|\le H$ for $i=m,\dots,n-1$ define the following subclass of $\cF_{n}(H)$
$$
\cF_{n}(H,\vv b)=\big\{F\in\cF_n(H):a_i(F)=b_i~~\text{if }m\le i\le n-1\big\}\,.
$$
Let $F, T\in \cF_{n}(H,\vv b)$ and assume that $\tilde\sigma(F)\cap\tilde\sigma(T)\neq\emptyset$. Let $\vv x=(x_1,\dots,x_m)$ be any point in this intersection. Define $R=F-T$ and assume that $R\neq 0$. Thus, for some $\vv r=(r_0,\dots,r_{m-1})\in\Z^m\setminus\{\vv0\}$ we have that
$$
R_j(x_j)=r_{0}f_{j,0}(x_j)+\dots+r_{m-1}f_{j,m-1}(x_j)
$$
for $j=1,\dots,m$. By \eqref{b12} applied to both $F$ and $T$, we have that
$$
|R_j(x_j)|=|F_j(x_j)-T_j(x_j)|\le |F_j(x_j)|+|T_j(x_j)|\ll 2c_1
$$
for all $j\in\{1,\dots,m\}$. By Lemma~\ref{lem-3} and inequalities \eqref{vb12+++} and \eqref{M}, we get that
$$
|\vv r|_\infty\ll c_1,
$$
where the implied constant depends only on $M$, $n$ and $c_0$. Hence, there is a sufficiently small choice of $c_1$ determined by $M$, $n$ and $c_0$ only such that for sufficiently large $H$ we have that $|\vv r|_\infty<1$. On the other hand, since $\vv r\in\Z^{m}\setminus\{\vv0\}$ we must have that $|\vv r|_\infty\ge1$. This gives a contradiction, which means that we must have that
\begin{equation}\label{vbvb}
\tilde\sigma(F)\cap\tilde\sigma(T)=\emptyset\qquad\text{for any different }
F, T\in \cF_{n}(H,\vv b)
\end{equation}
provided that $H$ is sufficiently large and $c_1>0$ is sufficiently small.
Hence
\[
\sum_{F\in \cF_{n}(H,\vv b)}~~\lambda_m(\tilde\sigma(F))\le \lambda_m(U)<\infty.
\]
Together with \eqref{11b} this gives that
\[
\sum_{F\in \cF_{n}(H,\vv b)}~~\lambda_m(\sigma(F))\ll\Psi^m(H),
\]
which further implies that
\[
\sum_{F\in\cF_n}\lambda_m(\sigma(F))=\sum_{H=1}^{\infty}~\sum_{\vv b\in\Z^{n-m}}~\sum_{F\in \cF_{n}(H,\vv b)}~\lambda_m(\sigma(F))\ll \sum_{H=1}^{\infty}H^{n-m}\Psi^m(H)<\infty.
\]
Using the Borel-Cantelli lemma now completes the proof.
 \end{proof}

\subsection{The case of `too good' approximations}

In this subsection we deal with several instances when one (or two) derivatives $|F'_j(x_j)|$ happen to be too small or $\Psi(H)$ is much smaller than the upper bound we have from \eqref{eq55}. In either case the extra approximation property will enable us to either appeal to Theorem~\ref{thm-2} or simply use the Borel-Cantelli lemma.

We begin with the observation that Proposition~\ref{p:5} effectively allows us impose the condition that $|F'_j(x_j)|<H(F)^{\frac12}$ for some $j$. The following statement deals with the case when another derivative also happens to be small.

\medskip

\begin{proposition}\label{p:7}
Let $2\le m<n$, $1\le j_1\neq j_2\le m$ be integers and \eqref{conv} be satisfied. Then, for any $\delta>0$ for almost every $(x_1,\dots,x_m)\in U$ there are only finitely many $F\in\cF_n$ simultaneously satisfying \eqref{eq40q},
\begin{equation}\label{eq60++}
  |F'_{j_1}(x_{j_1})|< H(F)^{\frac12-\delta}\quad\text{and}\quad  |F'_{j_2}(x_{j_2})|< H(F)^{\frac12}\,.
 \end{equation}
\end{proposition}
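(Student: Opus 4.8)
The plan is to exploit the two extra conditions in \eqref{eq60++} as additional small-value inequalities and feed them, together with \eqref{eq40q}, into Theorem~\ref{thm-2}. First I would observe that we may work with $F\in\cFN$ for a fixed $\bm N\in\{1,\dots,n\}^m$ (by Lemma~\ref{lem4} and the reduction preceding \eqref{vv}), and that by \eqref{eq55} we have $\Psi(H)\ll H^{-(n-m)/m}$; moreover by Proposition~\ref{p:5} we may assume $\min_j|F_j'(x_j)|<H^{1/2}$, so the conditions in \eqref{eq60++} are genuinely new information only at two coordinates. The key is to dyadically decompose: for $F\in\cFN$ satisfying \eqref{eq40q} and \eqref{eq60++} with $2^t\le H(F)<2^{t+1}$, and writing $\Psi(H)\le \psi_t$ where $\psi_t$ is (up to a factor $2^{(n-m)/m}$) the value $\Psi(2^t)$, we get that $\vv x$ lies in a set of the form $\cA(G,\bmtheta_t)$ with $G$ as in \eqref{G}. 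The natural choice is $\ell_{j_1}=\ell_{j_2}=1$, all other $\ell_j=0$ if $m+2\le n+1$ (otherwise we absorb the surplus rows into one of the blocks via larger $\ell$ and large components $2^t$ of $\bmtheta$, exactly as in the first bullet of \S\ref{3.3}), and $\bmtheta_t$ has the components $\psi_t$ (for each of the $m$ functional inequalities), $2^{-t(1/2-\delta)}$ at the $j_1$-derivative slot, $2^{-t/2}$ at the $j_2$-derivative slot, and $2^t$ for any padding slots.

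Next I would compute $\theta$ from \eqref{e:083+} and verify Property~\BBG{} for each $\tilde\theta_j$ (after possibly shrinking $U$ to a subcube so that $\psi_t\le 1$ for large $t$, using \eqref{eq55}), then invoke Proposition~\ref{p:3} to get, for a.e.\ $\vv x_0$ and a suitable ball $B_0$,
$$
\lambda_m\big(B_0\cap\cA(G,\bmtheta_t)\big)\ll \big(1+(\widehat\Theta/\delta_0)^\alpha\big)\,\theta^\alpha\,\lambda_m(B_0),
$$
where I abbreviate the constant from Proposition~\ref{p:3} as $\delta_0$ to avoid clashing with the $\delta$ of the Proposition. Using \eqref{omega2} one checks $\widehat\Theta\ll1$ precisely when the exponent budget is positive; the point is that the extra two inequalities contribute $-(1/2-\delta)-1/2 = -1+\delta$ to the exponent sum in the spirit of \eqref{ve}, while the $m$ inequalities $|F_j(x_j)|<\Psi(H)$ contribute (via the convergence of $\sum H^{n-m}\Psi^m$) just enough that $\theta^\alpha$ is summable in $t$. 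Concretely, $\theta^{n+1}\asymp \psi_t^{m}\cdot 2^{-t(1-\delta)}$, and since $\sum_t 2^{t(n-m)}\psi_t^{m}<\infty$ is equivalent to \eqref{conv}, one gets $\sum_t \theta^{\alpha'}2^{\eta t}<\infty$ for some $\eta>0$ and small $\alpha'$; picking $\alpha$ (shrinking it if necessary, which is harmless in Proposition~\ref{p:1}) small enough makes $\sum_t\lambda_m(B_0\cap\cA(G,\bmtheta_t))<\infty$. The Borel--Cantelli lemma then kills all but finitely many such $F$ for a.e.\ $\vv x_0\in B_0$, and covering $U$ by countably many such balls finishes the proof.

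The main obstacle I anticipate is bookkeeping in the case $m+2>n+1$, i.e.\ $n\in\{m,m+1\}$: there the matrix $G$ cannot accommodate two separate order-one blocks plus $m-2$ order-zero blocks without exceeding $k=n+1$ rows, so one must either reduce to the already-handled regime (for $n=m$ this is Proposition~\ref{p:4}, and one should check $\delta$ small forces \eqref{eq60++} to be essentially vacuous or contradictory with the lower Wronskian-type bounds \eqref{vb12}) or, for $n=m+1$, use a single order-one block at $j_1$ and an order-zero block elsewhere, placing the $j_2$-derivative inequality as a genuine extra row only when there is room — and verifying that the exponent count $-(1/2-\delta)$ from the single surviving extra derivative together with $\sum H^{n-m}\Psi^m<\infty$ still beats the threshold. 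A secondary subtlety is ensuring Property~\BBG{} holds for the padded block $\tilde\theta_{j_1}=(\psi_t,2^{-t(1/2-\delta)},2^t,\dots,2^t)$: the components must be monotone in the required $\le1$-then-$\ge1$ pattern, which holds for large $t$ since $\psi_t\le1$ and $2^{-t(1/2-\delta)}\le1<2^t$. Once these edge cases are dispatched, the argument is a routine instance of the machinery already assembled in \S\ref{iprp}--\S\ref{3.3}.
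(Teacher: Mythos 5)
Your core idea is the right one, but you have essentially unrolled the proof of Theorem~\ref{thm-2} instead of applying it, which is all the paper does. The paper's argument is four lines: by \eqref{eq55} the convergence hypothesis \eqref{conv} gives $\Psi(H)=o\big(H^{-\frac{n-m}{m}}\big)$, and by \eqref{eq60b} the remaining derivatives satisfy $|F_j'(x_j)|\ll H(F)$; hence any point admitting infinitely many $F$ with \eqref{eq40q} and \eqref{eq60++} satisfies \eqref{vb0} for infinitely many $F$ with $v_1=\dots=v_m=\frac{n-m}{m}$, $v_{j_1}'=-\tfrac12+\delta$, $v_{j_2}'=-\tfrac12$ and $v_j'=-1$ otherwise; the sum in \eqref{ve} equals $(n-m)+(-1+\delta)-(m-2)=n+1-2m+\delta>n+1-2m$, and Theorem~\ref{thm-2} finishes the proof. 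Everything you propose to do by hand (dyadic decomposition, choice of $G$ and $\bmtheta_t$, Property~\BBG{}, Proposition~\ref{p:3}, Borel--Cantelli) is exactly the content of the already-established proof of Theorem~\ref{thm-2}, so the black-box application buys you a one-line verification in place of two pages. Three of your side remarks also need correction, though none is fatal to the plan: (i) the ``edge case'' $m+2>n+1$ is vacuous here since the hypothesis $m<n$ gives $n+1\ge m+2$, and in any case Theorem~\ref{thm-2} handles the block bookkeeping internally via its choice of the $\ell_j$; (ii) your formula $\theta^{n+1}\asymp\psi_t^m2^{-t(1-\delta)}$ omits the contribution $2^{t(n-m-1)}$ of the padding slots when $n>m+1$; and (iii) you do not need the full strength of the convergence of $\sum_h h^{n-m}\Psi^m(h)$ in the summability step -- only the power bound \eqref{eq55} it implies -- and indeed trying to exploit the finer convergence through the exponent $\alpha$ of Proposition~\ref{p:1} would be delicate since $\alpha$ is small and not explicit, whereas the surplus $\delta$ in \eqref{ve} already yields geometric decay.
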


\begin{proof}
Take any $\vv x=(x_1,\dots,x_m)\in U$ such that \eqref{eq40q} and \eqref{eq60++} are satisfied for infinitely many $F\in\cF_n$. By \eqref{eq55} and \eqref{eq60b}, we then have that \eqref{vb0} is satisfied for infinitely many $F\in\cF$ with the following choice of exponents
\[
 \begin{array}{l}
v_1=\dots=v_m=\frac{n-m}m,~~~~~v'_{j_1}=-\tfrac12+\delta,~v'_{j_2}=-\tfrac12,\\[2ex]
 v'_j=-1\quad\text{for }j\ne j_1,j_2~~~(1\le j\le m).
 \end{array}
\]
It is easy to see that condition \eqref{ve} holds. Then, applying Theorem~\ref{thm-2} completes the proof of the proposition.
\end{proof}

\medskip

Now we deal with the case of `small' $\Psi$. When dealing with this case we will naturally assume that one of the derivatives $|F_j(x_j)|$ is small, namely, we will assume the inequality opposite to \eqref{vb1x}, as otherwise we are covered by Proposition~\ref{p:5}.

\begin{proposition}\label{p:8}
Suppose that $m<n$ and \eqref{conv} is satisfied. Then for almost every $(x_1,\dots,x_m)\in U$ there only finitely many $H\in\N$ such that $\Psi(H)< H^{-\frac{n+\frac12-m}{m}-\delta}$ and for some $F\in\cF_n(H)$ inequalities  \eqref{eq40q} and
\begin{equation}\label{vb1x=}
\min_{1\le j\le m}|F'_j(x_j)|< H^{\frac12}
\end{equation}
are simultaneously satisfied.
\end{proposition}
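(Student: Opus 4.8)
The plan is to treat this as a convergence Borel--Cantelli estimate, exploiting the strong smallness of $\Psi$ together with the assumed smallness \eqref{vb1x=} of one derivative. By re-ordering the maps we may suppose the small derivative occurs at $j=m$, so we work with the subcollection of $F\in\cF_n(H)$ satisfying \eqref{eq40q} and $|F'_m(x_m)|<H^{\frac12}$. First I would fix $H\in\N$ and an integer vector of coefficients. The key geometric input is Lemma~\ref{lem5}: for $j=1,\dots,m-1$ the ``slice'' $\sigma_j(F,\alpha_1,\beta_1)$ is a bounded union of intervals whose total length is $\ll\Psi(H)/\beta_1$ once we know a lower bound $\beta_1$ on $|F'_j|$. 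For $j=m$ we instead use Lemma~\ref{lem3} (or Lemma~\ref{lem4}) with the derivative $F_m^{(N_m)}$ of order $N_m\ge1$ which is $\gg H$ by \eqref{eq46}: combined with $|F_m(x_m)|<\Psi(H)$ and $|F'_m(x_m)|<H^{\frac12}$, this confines $x_m$ to a union of at most $K$ intervals, each of length $\ll\bigl(\Psi(H)/H\bigr)^{1/N_m}$ or, using the pair of indices $(0,1)$ together with $(1,N_m)$, of length $\ll\min\{\Psi(H)/\beta,\,(\beta/H)^{1/(N_m-1)}\}$ where $\beta$ is a lower bound for $|F'_m|$. The point is that we gain an extra power of $H$ beyond the trivial $\Psi(H)$ from the $m$-th coordinate, and this extra gain is what pays for the sum over the coefficients $a_0,\dots,a_{m-1}$ that are otherwise ``free''.

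The main obstacle is the same as in the surrounding propositions: for the first $m-1$ coordinates we have no a~priori lower bound on $|F'_j(x_j)|$, so a naive product of slice lengths does not converge. The standard device here is a dyadic decomposition of the derivatives: partition the range of each $|F'_j(x_j)|$ (for $1\le j\le m-1$) into blocks $2^{t_j}\le|F'_j(x_j)|<2^{t_j+1}$ with $0\le t_j\ll\log H$ (using \eqref{eq60b}), and similarly for $|F'_m(x_m)|$ write $2^{t_m}\le|F'_m(x_m)|<2^{t_m+1}$ with $t_m<\frac12\log_2 H$. On each such block the slices have controlled lengths: $\lambda_1(\sigma_j)\ll\Psi(H)\,2^{-t_j}$ for $j<m$ and $\lambda_1(\sigma_m)\ll\min\{\Psi(H)2^{-t_m},\,(2^{t_m}/H)^{1/(N_m-1)}\}$ if $N_m\ge2$, or $\ll\Psi(H)/H$ directly if $N_m=1$. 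Summing $\lambda_m(\sigma(F))$ over $F\in\cF_n(H)$ in the fixed block $(t_1,\dots,t_m)$ amounts to summing over the $m$ free coefficients; the count is $\ll H^m$ diminished by the constraints that $x\in\sigma(F)$ for a fixed $x$ forces the coefficients to lie in a box whose volume in each direction is the reciprocal of the corresponding derivative lower bound — a difference-polynomial argument exactly as in the proof of Proposition~\ref{p:5} via Lemma~\ref{lem-3} and \eqref{vb12+++}. This replaces the sum over coefficients by a factor $\ll H^{n-m}\cdot 2^{t_1+\dots+t_{m-1}}\cdot\min\{2^{t_m},H^{1/2}\}$ (up to logarithmic factors absorbed later).

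Putting the pieces together, the contribution of a single dyadic block to $\sum_{F\in\cF_n(H)}\lambda_m(\sigma(F))$ is
\[
\ll H^{n-m}\,2^{t_1+\dots+t_{m-1}}\,2^{t_m}\cdot\Psi(H)^{m-1}\prod_{j<m}2^{-t_j}\cdot\min\Bigl\{\Psi(H)2^{-t_m},\bigl(2^{t_m}/H\bigr)^{1/(N_m-1)}\Bigr\},
\]
in which the factors $2^{\pm t_j}$ for $j<m$ cancel, leaving $\ll H^{n-m}\Psi(H)^{m-1}\cdot 2^{t_m}\min\{\Psi(H)2^{-t_m},(2^{t_m}/H)^{1/(N_m-1)}\}$. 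Summing the geometric-type series over $t_m<\frac12\log_2 H$, the optimal $t_m$ balances the two terms and one finds the block sum is $\ll_\epsilon H^{n-m}\Psi(H)^{m-1}\cdot\Psi(H)^{1-1/N_m}H^{-1/N_m}\cdot H^{\eta}$ for any $\eta>0$; since $N_m\le n$ and, by hypothesis, $\Psi(H)<H^{-\frac{n+\frac12-m}{m}-\delta}$ is strictly smaller than the critical rate $H^{-(n-m)/m}$ of \eqref{eq55} by a fixed power $H^{-\frac{1}{2m}-\delta}$, each such block contributes at most $\ll H^{n-m}\Psi(H)^m\cdot H^{-c(\delta)}$ for some $c(\delta)>0$ after accounting for the $\ll(\log H)^m$ choices of $(t_1,\dots,t_m)$. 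Hence
\[
\sum_{H=1}^{\infty}\ \sum_{F\in\cF_n(H)}\lambda_m\bigl(\sigma(F)\bigr)\ \ll\ \sum_{H=1}^{\infty}H^{n-m}\Psi(H)^m\,H^{-c(\delta)}(\log H)^m\ <\ \infty,
\]
which converges because $\sum H^{n-m}\Psi(H)^m<\infty$ by \eqref{conv} and the extra $H^{-c(\delta)}(\log H)^m$ only helps. The Borel--Cantelli lemma then shows that for almost every $(x_1,\dots,x_m)\in U$ only finitely many such $H$ (and $F$) occur, which is the assertion. The delicate point to get right is the exact bookkeeping of the exponent in the $t_m$-summation so that the net power of $H$ saved is genuinely positive for every admissible $N_m\in\{1,\dots,n\}$; this is where the precise gap $\frac{1}{2m}+\delta$ between $\Psi$ and the critical exponent is consumed.
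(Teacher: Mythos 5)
Your proposal misses the intended (and much shorter) argument: the hypothesis $\Psi(H)<H^{-\frac{n+1/2-m}{m}-\delta}$ together with \eqref{vb1x=} puts you exactly in the situation of Theorem~\ref{thm-2}. Fixing the index $j_1$ at which the minimum in \eqref{vb1x=} is attained (there are only $m$ choices, so one may fix it), one takes $v_1=\dots=v_m=\frac{n+\frac12-m}{m}+\delta$, $v'_{j_1}=-\tfrac12$ and $v'_j=-1$ for $j\neq j_1$; then $\sum_j v_j+\sum_j v'_j=(n+\tfrac12-m+m\delta)+(-\tfrac12-(m-1))=n+1-2m+m\delta>n+1-2m$, so \eqref{ve} holds and Theorem~\ref{thm-2} gives the conclusion at once. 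This is the whole point of having proved Theorem~\ref{thm-2} first: propositions of ``too good approximation'' type are meant to be absorbed by it, and your direct counting attempt re-opens a problem that the quantitative non-divergence machinery has already closed.

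Beyond being a detour, your counting argument has two genuine gaps. First, the ``effective coefficient count'' $H^{n-m}\,2^{t_1+\dots+t_{m-1}}\min\{2^{t_m},H^{1/2}\}$ per dyadic block is asserted by analogy with Proposition~\ref{p:5}, but the mechanism there is disjointness of the \emph{expanded} boxes $\tilde\sigma(F)$ within each class $\cF_n(H,\vv b)$, and that expansion is calibrated using the lower bound $|F_j'|\ge H^{1/2}$ --- precisely the hypothesis that \eqref{vb1x=} denies you in the $m$-th coordinate. For a fixed point $\vv x$ the conditions $|F_j(x_j)|<\Psi(H)$ alone pin down $(a_0,\dots,a_{m-1})$ to a single value via \eqref{vb12+++}, which gives a count $\ll H^{n-m}$ with no compensating $\prod_j 2^{-t_j}$ in the measure; to recover a factor proportional to the derivative sizes you would need the full essential/inessential decomposition of Proposition~\ref{p:9}, which you have not carried out. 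Second, the $t_m$-summation does not produce the claimed $H^{-c(\delta)}$ saving: in the regime where $\Psi(H)2^{-t_m}$ is the active term of the minimum, the summand $2^{t_m}\cdot\Psi(H)2^{-t_m}=\Psi(H)$ is independent of $t_m$, so summing over the $\asymp\log H$ admissible values yields $H^{n-m}\Psi(H)^m(\log H)^m$ over all blocks, and convergence of $\sum_H H^{n-m}\Psi(H)^m$ does \emph{not} imply convergence after multiplication by $(\log H)^m$. Moreover the ``balanced'' bound $\Psi^{1-1/N_m}H^{-1/N_m}=\Psi\cdot(\Psi H)^{-1/N_m}$ is \emph{larger} than $\Psi$ whenever $\Psi(H)<H^{-1}$, which happens throughout the range covered by the hypothesis once $n\ge 2m$; so the strong smallness of $\Psi$, which you invoke as the source of the saving, actually pushes this comparison the wrong way. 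The step ``each such block contributes at most $\ll H^{n-m}\Psi(H)^m\cdot H^{-c(\delta)}$'' is therefore unsubstantiated, and the proof as written does not close.
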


\begin{proof}
Once again the proof is readily obtained by applying Theorem~\ref{thm-2}, this time with the following choice of exponents:
\[
 \begin{array}{l}
v_1=\dots=v_m=\frac{n+\frac12-m}m+\delta,~~~~~v'_{j_1}=-\tfrac12,\\[2ex]
 v'_j=-1\quad\text{for }j\ne j_1~~~(1\le j\le m),
 \end{array}
\]
where $1\le j_1\le m$ is fixed but arbitrary.
\end{proof}

\bigskip

\begin{proposition}\label{p:6}
Let $\bm N=(N_1,\dots,N_m)\in\{1,\dots,n\}^m$. Then for any $1\le j\le m$ and any $0\le k_j< N_j$ for almost all $\vv x=(x_1,\dots,x_m)\in U$ there are only finitely many $F\in\cFN$ such that
\begin{equation}\label{eq60}
  |F^{(k_j)}_j(x_j)|< H(F)^{-n(n+1)}\,.
 \end{equation}
\end{proposition}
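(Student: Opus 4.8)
The plan is to prove Proposition~\ref{p:6} by a direct convergence Borel--Cantelli estimate, the whole point being that membership of $F$ in $\cFN$ forces the single derivative $F_j^{(N_j)}$ to be uniformly bounded below by a positive constant times $H(F)$, and this rigidity is exactly what is needed to control the measure of the set where a lower-order derivative $F_j^{(k_j)}$ is abnormally small. Fix the pair $(j,k_j)$ and set $N=N_j-k_j$, so that $1\le N\le n$. For each $F\in\cFN$ of height $H=H(F)$ I would consider the one-dimensional set $E(F)=\{x_j\in U_j:|F_j^{(k_j)}(x_j)|<H^{-n(n+1)}\}$ and the cylinder $A(F)=\{\vv x\in U:x_j\in E(F)\}$ obtained by letting the remaining coordinates range freely over the bounded intervals $U_i$ ($i\ne j$). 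Since the set of $\vv x$ for which \eqref{eq60} has infinitely many solutions $F\in\cFN$ is precisely $\limsup_{F\in\cFN}A(F)$, it suffices to show $\sum_{F\in\cFN}\lambda_m(A(F))<\infty$.

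The key step is the estimate for $\lambda_1(E(F))$, which I would extract from Lemma~\ref{lem3} applied to $f=F_j^{(k_j)}$ with top order $N$. Here $f^{(N)}=F_j^{(N_j)}$, so \eqref{eq46} gives $\inf_{U_j}|f^{(N)}|\gg H$; taking $\alpha_0=H^{-n(n+1)}$ together with the trivial intermediate data $\beta_k=0$, $\alpha_k=+\infty$ ($1\le k\le N-1$), the lemma shows $E(F)$ is a union of boundedly many intervals, each of length $\ll(\alpha_0/\inf_{U_j}|f^{(N)}|)^{1/N}\ll H^{-(n(n+1)+1)/N}$. Using $N\le n$ this is $\ll H^{-(n+1)-1/n}$, with implied constants depending only on $n$, $c_0$, $M$ and the lengths $\lambda_1(U_i)$; hence $\lambda_m(A(F))\ll H^{-(n+1)-1/n}$.

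Finally, since every $F\in\cF_n$ satisfies $H(F)=a_n(F)$ and $|a_i(F)|\le H(F)$ for all $i$, there are $\ll H^n$ elements of $\cFN$ of height $H$, and therefore $\sum_{F\in\cFN}\lambda_m(A(F))\ll\sum_{H\ge1}H^n\cdot H^{-(n+1)-1/n}=\sum_{H\ge1}H^{-1-1/n}<\infty$; an application of the Borel--Cantelli lemma then finishes the proof. I do not anticipate any genuine obstacle here: the only items needing a line of care are verifying the hypotheses of Lemma~\ref{lem3} (in particular $0<\beta_N=\inf_{U_j}|F_j^{(N_j)}|<\infty$, which holds because this infimum is a positive constant multiple of $H$) and observing that the exponent $n(n+1)$ is comfortably large — any exponent strictly exceeding $n+1$ would already suffice, the spare room $1/n$ coming from the bound $N\le n$.
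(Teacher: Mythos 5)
Your argument is correct and is essentially the paper's own proof: both apply Lemma~\ref{lem3} to $F_j^{(k_j)}$ using the lower bound \eqref{eq46} on $F_j^{(N_j)}$ to get $\lambda_1(E(F))\ll H^{-(n(n+1)+1)/n}$, multiply by the $\ll H^n$ count of $F\in\cFN$ of height $H$, and conclude by Borel--Cantelli (the paper phrases the last step via Fubini on the null set $S_j\subset U_j$ rather than via cylinders $A(F)$, which is the same thing). Your explicit verification of the hypotheses of Lemma~\ref{lem3} and the remark that only the pair $(k,l)=(0,N)$ gives a finite bound are accurate refinements of what the paper leaves implicit.
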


\begin{proof}
By \eqref{eq46} and Lemma~\ref{lem3}, for any fixed $F\in\cFN$ the set $\sigma_j(F)$ of $x_j\in U_j$ satisfying \eqref{eq60} has measure $\ll H(F)^{-(n(n+1)+1)/n}=H(F)^{-n-1-1/n}$. Since the number of $F\in\cF_n$ with $H(F)=H$ is $\ll H^n$ we get that
$$
\sum_{F\in\cFN}\lambda_1(\sigma_j(F))\ll\sum_{H=1}^\infty H^n\cdot H^{-n-1-1/n}
=\sum_{H=1}^\infty H^{-1-1/n}<\infty.
$$
Hence, by the Borel-Cantelli Lemma, any $x_j$ that belongs to infinitely many of $\sigma_j(F)$ lies in a subset of $U_j$ of Lebesgue measure zero, say $S_j$. Therefore, using Fubili's theorem we conclude that any $\vv x=(x_1,\dots,x_m)\in U$ such that $x_j$ lies in $S_j$ has $\lambda_m$-measure zero and the proof is thus complete.
\end{proof}

\subsection{The remaining case}

From now on we fix any $\bm N=(N_1,\dots,N_m)\in\{1,\dots,n\}^m$ and let $\delta$ be a small positive real number that will be specified later.
Given any collection of integers $\bm\ell=\{\ell_0,\ell_{j,i}:1\le j\le m,~1\le i\le N_j\}$, define $\cL(\cFN,\Psi,\bm\ell)$ to be the set of $(x_1,\dots,x_m)\in U$ such that
for infinitely many $H\in\N$ with
\begin{equation}\label{Psiv}
H^{-(\ell_0+1)\delta}\le\Psi(H)< H^{-\ell_0\delta}
\end{equation}
there exists $F\in\cF_n(H)$ satisfying \eqref{eq40q} and
\begin{equation}\label{eq87}
  H(F)^{(\ell_{j,i}-1)\delta}\le |F^{(i)}_{j}(x_{j})|< H(F)^{\ell_{j,i}\delta}
 \end{equation}
for all $1\le j\le m$ and $1\le i<N_j$.

In view of Propositions~\ref{p:4}--\ref{p:6}, to complete the proof of the convergence case of Theorem~\ref{t2} it remains to show that $\lambda_m(\cL(\cFN,\Psi,\bm\ell))=0$ in the following instances:
\begin{align}
\label{l-1}\tfrac{n-m}{m}&\le \ell_0\delta \le \tfrac{n+\frac12-m}{m}\\[1.5ex]
\label{l-2}-n(n+1)&\le \ell_{j_0,1}\delta\le \tfrac12&&\text{for some $j_0$ }(1\le j_0\le m),\\[2ex]
\label{l-3}\tfrac12 &\le \ell_{j,1}\delta \le 1+\delta&&\text{for all $j\neq j_0$ $(1\le j\le m)$}\\[2ex]
\label{l-4}-n(n+1)&\le \ell_{j,i}\delta\le 1+\delta&&\text{for all }1\le j\le m,~2\le i< N_j.
\end{align}
Recall that we agreed that $m\ge2$ as the case $m=1$ is done in \cite{bu1}.

\bigskip

\begin{proposition}\label{p:9}
Suppose that $2\le m<n$ and \eqref{conv} is satisfied. Then for any collection of integer parameters $\bm\ell$ as above subject to \eqref{l-1}--\eqref{l-4} we have that $\lambda_m(\cL(\cFN,\Psi,\bm\ell))=0$.
\end{proposition}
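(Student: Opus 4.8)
The plan is to estimate the measure of $\cL(\cFN,\Psi,\bm\ell)$ directly via a Borel--Cantelli argument, in the spirit of Proposition~\ref{p:5}, but now using the full strength of the anti-concentration information encoded in the parameters $\bm\ell$. Fix a height $H\in\N$ satisfying \eqref{Psiv}. For $F\in\cF_n(H)$ let $\sigma(F)$ be the set of $\vv x\in U$ at which \eqref{eq40q} and \eqref{eq87} hold. By Lemma~\ref{lem5} together with \eqref{eq87}, the projection of $\sigma(F)$ onto the $x_j$ axis has measure $\ll \Psi(H)\,H^{-(\ell_{j,1}-1)\delta}$, so that
\begin{equation}\label{plan-sigma}
\lambda_m(\sigma(F))\ll \Psi^m(H)\prod_{j=1}^m H^{-(\ell_{j,1}-1)\delta}.
\end{equation}
Summing \eqref{plan-sigma} over all $F\in\cF_n(H)$ naively loses, because there are $\asymp H^{n}$ such $F$ and $\Psi^m(H)\asymp H^{-(n-m)}$ at best (by \eqref{l-1}), so extra savings must come from grouping polynomials whose top $n-m$ coefficients coincide, exactly as in Proposition~\ref{p:5}: within each class $\cF_n(H,\vv b)$ the difference $R=F-T$ has only $m$ nonzero coefficients $r_0,\dots,r_{m-1}$, and one must show these classes contribute a bounded overlap.

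The key step is therefore to replace the crude ``$\tilde\sigma(F)$'' boxes of Proposition~\ref{p:5} by boxes built from \emph{all} the derivative data in \eqref{eq87}. First I would, for each $F$ and each $j$, pick a near-minimiser $x_j^*$ of $|F_j'(x_j)|$ on the relevant slice and define the box $\tilde\sigma_j(F)$ of radius $\asymp H^{-(\ell_{j,1}-1)\delta}$ around $x_j^*$ (the reciprocal of the derivative lower bound), with $\tilde\sigma(F)=\prod_j\tilde\sigma_j(F)$; by construction $\lambda_m(\sigma(F))\ll \Psi^m(H)\lambda_m(\tilde\sigma(F))$. Then, Taylor-expanding $F_j$ at $x_j^*$ and using \eqref{eq87}, \eqref{l-4}, \eqref{eq60b} to control all intermediate derivative terms, one shows $|F_j(x_j)|\ll H^{\text{(small)}\delta}$ uniformly on $\tilde\sigma_j(F)$. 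Now given $F\neq T$ in $\cF_n(H,\vv b)$ with $\tilde\sigma(F)\cap\tilde\sigma(T)\neq\emptyset$, the difference polynomial $R$ satisfies $|R_j(x_j)|\ll H^{\text{(small)}\delta}$ at the common point $\vv x$, and also, differencing \eqref{eq87} with $i=1$, one gets a bound on $|R_j'(x_j)|$; feeding these into Lemma~\ref{lem-3} applied to $\vv g=(f_{j,0},\dots,f_{j,m-1})$ with the nonvanishing Wronskian-type determinant \eqref{vb12+++} (or its higher analogue if more derivatives are needed) yields $|\vv r|_\infty\ll H^{\text{(tiny)}\delta}$. Provided $\delta$ is chosen small enough relative to $n$ this forces $|\vv r|_\infty<1$ for large $H$, hence $\vv r=\vv 0$, a contradiction; thus the $\tilde\sigma(F)$ are pairwise disjoint within each class $\cF_n(H,\vv b)$ and $\sum_{F\in\cF_n(H,\vv b)}\lambda_m(\tilde\sigma(F))\le\lambda_m(U)$.

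Putting this together I would obtain, for each fixed $\vv b$,
\begin{equation}\label{plan-class}
\sum_{F\in\cF_n(H,\vv b)}\lambda_m(\sigma(F))\ll \Psi^m(H)\prod_{j=1}^m H^{-(\ell_{j,1}-1)\delta},
\end{equation}
and, since there are $\ll H^{\,n-m}$ admissible $\vv b$, summing over $\vv b$ and then over all $H$ satisfying \eqref{Psiv} gives a total bounded by $\sum_H H^{\,n-m}\Psi^m(H)\cdot H^{-\delta\sum_j(\ell_{j,1}-1)}$. Here one must check this series converges. The plain sum $\sum_H H^{n-m}\Psi^m(H)$ converges by \eqref{conv}; the additional factor $H^{-\delta\sum_j(\ell_{j,1}-1)}$ is harmless when $\sum_j\ell_{j,1}\ge m$, which holds because \eqref{l-3} gives $\ell_{j,1}\delta\ge\tfrac12$ for the $m-1$ indices $j\neq j_0$, while for $j_0$ the worst case $\ell_{j_0,1}\delta$ could be as low as $-n(n+1)$ by \eqref{l-2}; so in the problematic subcase $\ell_{j_0,1}<0$ one instead argues that the \emph{positivity} of the derivative on $\tilde\sigma_{j_0}(F)$ is not available and one must fall back on a different partition, using a higher derivative $F_{j_0}^{(N_{j_0})}$ via \eqref{eq46} together with Lemma~\ref{lem3} to bound $\lambda_1$ of the $x_{j_0}$-slice by $\ll(\Psi(H)/H)^{1/N_{j_0}}$ or similar. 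Reconciling these two regimes — large versus negative $\ell_{j_0,1}$ — and verifying that in every case the exponent of $H$ stays strictly below $-1$ after summing, is the main obstacle; the disjointness lemma itself is routine once the box radii are chosen correctly, so the real work is the bookkeeping over the ranges \eqref{l-1}--\eqref{l-4} and an appropriate choice of $\delta=\delta(n)$.
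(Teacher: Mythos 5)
Your plan runs into a genuine obstruction at the disjointness step, and this is not a bookkeeping issue but a structural one. You want to argue, as in Proposition~\ref{p:5}, that if $F\neq T\in\cF_n(H,\vv b)$ have $\tilde\sigma(F)\cap\tilde\sigma(T)\neq\emptyset$ then the difference $\vv r$ satisfies $|\vv r|_\infty<1$, hence $\vv r=\vv 0$. But with box radius $\asymp H^{-(\ell_{j,1}-1)\delta}$ the first-order Taylor term is already $|F_j'(\kappa_j)|\cdot H^{-(\ell_{j,1}-1)\delta}\ll H^{\ell_{j,1}\delta}\cdot H^{-(\ell_{j,1}-1)\delta}=H^{\delta}$, and the second-order term is at best $\ll H^{c\delta}$ for some $c>0$ under \eqref{l-3}--\eqref{l-4}. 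Thus $|F_j(x_j)|\ll H^{c\delta}$ on $\tilde\sigma_j(F)$, and Lemma~\ref{lem-3} with \eqref{vb12+++} yields only $|\vv r|_\infty\ll H^{c\delta}$, which tends to infinity with $H$ no matter how small $\delta>0$ is. No contradiction follows. (Proposition~\ref{p:5} escapes this because there $|F_j'|\ge H^{1/2}$, so both Taylor terms can be made $\le c_1$ for a constant $c_1$, independent of $H$.)

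This is precisely why the paper does \emph{not} attempt to prove disjointness. Instead, its proof of Proposition~\ref{p:9} partitions the domains $\sigma(F,\bm t)$ into \emph{essential} ones (those whose expansion box $\tilde\sigma(F,\bm t)$ does not meet $\tilde\sigma(\hat F,\hat{\bm t})$ for any other $\hat F\in\cFN(H)$) and \emph{inessential} ones. The essential case is handled by the Borel--Cantelli count you have in mind; the inessential case is handled by a genuinely different mechanism: on the overlap one derives bounds \eqref{w1}--\eqref{w3} on $R=F-\hat F$ and its first derivative in the $j=1$ direction, and then invokes Theorem~\ref{thm-2} (with the degree reduced by one because $a_n$ cancels) to conclude that the set of $\vv x$ lying in infinitely many inessential domains has measure zero. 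That application of Theorem~\ref{thm-2} is entirely absent from your proposal. Note also that the paper's box radii are different from yours: $\tilde\sigma_1(F,t_1)=\sigma_1(F,t_1)$ in the small-derivative direction $j_0=1$, and radius $(\Psi(H)H^{n-m})^{-1/(m-1)}/|F_j'(\kappa_j)|$ for $j\ge2$, calibrated so the volume ratio $\lambda_m(\sigma)/\lambda_m(\tilde\sigma)\ll H^{n-m}\Psi^m(H)$ falls out directly (without needing the $\vv b$-grouping for the essential count). This sidesteps the case $\ell_{j_0,1}<0$ that you flagged as unresolved, since no box expansion is attempted in that coordinate.
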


\begin{proof}
Without loss of generality we assume that \eqref{l-2} is satisfied with $j_0=1$ and hence \eqref{l-3} is satisfied for all $j$ $(2\le j\le m)$. Fix any $\bm\ell$ satisfying \eqref{l-1}--\eqref{l-4}. Let $H\in\N$ be such that \eqref{Psiv} is satisfied. Given
$$
F\in\cFN(H):=\big\{F\in\cFN:H(F)=H\}\,,
$$
for each $j=1\dots,m$ ~let
$$
\sigma_j(F)=\Big\{x_j\in U_j:|F_j(x_j)|<\Psi(H)~~\&~~\eqref{eq87}\text{ hold for all $1\le i<N_j$.}
\Big\}\,.
$$
Then, by Lemma~\ref{lem5}, there is a constant $K>1$ such that any non-empty $\sigma_j(F)$ is the union of at most $K$ intervals, say $\sigma_j(F,t_j)$. We have that $1\le t_j\le K$ although the actual number of intervals may be less than $K$. For each of these intervals fix a point $\kappa_j=\kappa_j(F,t)\in\sigma_j(F,t_j)$ such that
$$
|F_j'(\kappa_j)|\le 2\inf_{x_j\in\sigma_j(F,t_j)}|F_j'(x_j)|.
$$
Then, by Lemmas~\ref{lem5},
\begin{equation}\label{r4}
\lambda_1(\sigma_j(F,t_j))\ll \frac{\Psi(H)}{|F_j'(\kappa_j)|}\le\frac{\Psi(H)}{H^{(\ell_{j,1}-1)\delta}}\,,\qquad\kappa_j=\kappa_j(F,t_j).
\end{equation}
Further for $j=2,\dots,m$ we introduce the following auxiliary intervals:
$$
\tilde\sigma_j(F,t_j)=\Big\{x_j\in U_j:|x_j-\kappa_j|\le \frac{(\Psi(H)\, H^{n-m})^{-\frac1{m-1}}}{|F'_j(\kappa_j)|}\Big\},\quad\text{where }\kappa_j=\kappa_j(F,t_j).
$$
Also define $\tilde\sigma_1(F,t_j)=\sigma_1(F,t_j)$, and finally, given a $\bm t=(t_1,\dots,t_m)$ ($t_j\le K$), let
$$
\sigma(F,\bm t)=\prod_{j=1}^m\sigma_j(F,t_j)\qquad\text{and}\qquad \tilde\sigma(F,\bm t)=\prod_{j=1}^m\tilde\sigma_j(F,t_j)
$$
as long as each $\sigma_j(F,t_j)$ is defined. Clearly, each set $\sigma(F,\bm t)$ and $\tilde\sigma(F,\bm t)$ is a rectangle in $U$. In a nutshell, for a fixed $F$ the collection of all possible sets $\sigma(F,\bm t)$ as $\bm t$ varies, is simply the decomposition of $\prod_{j=1}^m\sigma_j(F)$ into rectangles. Our immediate goal is to show that $\tilde\sigma(F,\bm t)$ is an `expansion' of $\sigma(F,\bm t)$.

To this end, note that, by \eqref{eq55}, as $H\to\infty$ we have that
$\Psi(H)^m \le o( H^{-(n-m)})$,
which trivially implies that $\Psi(H)^{m-1}=o\big((\Psi(H)\, H^{n-m})^{-1}\big)$ and
further gives that
$$
\Psi(H)=o\Big((\Psi(H)\, H^{n-m})^{-\frac1{m-1}}\Big)\qquad\text{as }H\to\infty.
$$
Therefore, for $2\le j\le m$ we get that $\lambda_1(\sigma_j(F,t_j))=o(\lambda_1(\tilde\sigma_j(F,t_j)))$ as $H\to\infty$. Therefore, using the fact that $\kappa_j\in\sigma_j(F,t_j)$ we conclude that
$$
\sigma_j(F,t_j)\subset \tilde\sigma_j(F,t_j)\qquad\text{for sufficiently large $H$}.
$$
The above is also trivially true for $j=1$. Hence
\begin{equation}\label{dd}
\sigma(F,\bm t)\subset \tilde\sigma(F,\bm t)\qquad\text{for sufficiently large $H$}.
\end{equation}
Finally, note that for sufficiently large $H$
$$
\lambda_1(\tilde\sigma_j(F,t_j))\asymp \frac{(\Psi(H)\, H^{n-m})^{-\frac1{m-1}}}{|F'_j(\kappa_j)|}
$$
and therefore, by \eqref{r4}, we get that
$$
\frac{\lambda_1(\sigma_j(F,t_j))}{\lambda_1(\tilde\sigma_j(F,t_j))}\ll \frac{\Psi(H)}{(\Psi(H)\, H^{n-m})^{-\frac1{m-1}}}\qquad(2\le j\le m).
$$
Henceforth,
\begin{equation}\label{11b+}
  \lambda_m(\sigma(F,\bm t))\ll H^{n-m}\Psi^m(H)\lambda_m(\tilde\sigma(F,\bm t))
\end{equation}
provided that $H$ is sufficiently large.

We will call $\sigma(F,\bm t)$ an \emph{essential domain}\/ if $\tilde\sigma(F,\bm t)\cap\tilde\sigma(\hat F, \hat{\bm t})=\emptyset$ for any other $\hat F\in\cFN(H)$ and any possible $\hat{\bm t}$. Otherwise we will call $\sigma(F,\bm t)$ \emph{inessential}. Note that if $\sigma(F,\bm t)$ is essential, we allow that $\tilde\sigma(F,\bm t)$ is intersected by another domain $\tilde\sigma(F, \hat{\bm t})$ for the same $F$ and a different $\hat{\bm t}$. However the multiplicity of such intersections in at most the number of possible $m$-tuples $\bm t$, that is $K^m$. Therefore, we have that
$$
\sum_{\text{essential $\sigma(F,\bm t)$}}\lambda_m(\tilde\sigma(F,\bm t))\le K^m\lambda_m(U).
$$
Hence, by \eqref{11b+} we get that
$$
\sum_{H\in\N:\text{ \eqref{Psiv} holds}}~~\sum_{\substack{F\in\cFN, ~H(F)=H\\[0.5ex]\text{$\sigma(F,\bm t)$ is essential}}}\lambda_m(\sigma(F,\bm t))\ll\sum_{H=1}^\infty H^{n-m}\Psi^m(H)<\infty.
$$
By the Borel-Cantelli lemma, the set of $\vv x$ that fall into infinitely many essential domains is of Lebesgue measure zero.

\medskip

Now suppose that $\vv x=(x_1,\dots,x_m)\in U$ belongs to an infinite number of inessential domains and let $\sigma(F,\bm t)$ be one of these domains. By \eqref{dd}, we have that
$\vv x\in\tilde\sigma(F,\bm t)$ while by the definition of inessential domains we have that
$\tilde\sigma(F,\vv t)\cap\tilde\sigma(\hat F,\hat{\bm t})\neq\emptyset$ for some $\hat F\in\cFN(H)$ different from $F$ and some $\hat{\bm t}$. In particular, it implies that
\begin{equation}\label{nj}
  |x_j-\kappa_j|\ll \frac{(\Psi(H)\, H^{n-m})^{-\frac1{m-1}}}{H^{(\ell_{j,1}-1)\delta}}\le \frac{(H^{-(\ell_0-1)\delta}\, H^{n-m})^{-\frac1{m-1}}}{H^{(\ell_{j,1}-1)\delta}}\qquad(2\le j\le m),
\end{equation}
where $\kappa_j$ is any of $\kappa_j(F,\bm t)$ and $\kappa_j(\hat F,\hat{\bm t})$.

Now we use Taylor's expansion of $F_j(x_j)$ and $\hat F_j(x_j)$ at $\kappa_j$ $(2\le j\le m)$ to estimate $|F_j(x_j)|$ and $|\hat F_j(x_j)|$. Then, using \eqref{eq60b}, \eqref{eq87}, \eqref{l-3}, \eqref{nj}, Taylor's formula and the fact that $\kappa_j\in\sigma_j(F,t_j)$, we get
\begin{align}
\nonumber|F_j(x_j)| &\le |F_j(\kappa_j)|+|F'_j(\kappa_j)(x_j-\kappa_j)|+\tfrac12|F''_j(\tilde x_j)(x_j-\kappa_j)^2|\\[1ex]
\nonumber&\ll \Psi(H)+
H^{\ell_{j,1}\delta}\cdot\frac{(\Psi(H)\, H^{n-m})^{-\frac1{m-1}}}{H^{(\ell_{j,1}-1)\delta}}+
H\left(\frac{(\Psi(H)\, H^{n-m})^{-\frac1{m-1}}}{H^{(\ell_{j,1}-1)\delta}}\right)^2\\[1ex]
\nonumber&~\stackrel{\eqref{eq87}\&\eqref{l-3}}{\ll}~ \Psi(H)+H^{\delta}(\Psi(H)\, H^{n-m})^{-\frac1{m-1}}+H\left(\frac{(\Psi(H)\, H^{n-m})^{-\frac1{m-1}}}{H^{\frac12-\delta}}\right)^2\\[1ex]
&=~ \Psi(H)+H^\delta(\Psi(H)\, H^{n-m})^{-\frac1{m-1}}\left(1+H^\delta(\Psi(H)\, H^{n-m})^{-\frac1{m-1}}\right).\label{e62}
\end{align}
By \eqref{eq55},
$$
(\Psi(H)\, H^{n-m})^{-\frac1{m-1}}\gg (H^{-\frac{n-m}{m}}\, H^{n-m})^{-\frac1{m-1}}=
H^{-\frac{n-m}{m}}\gg \Psi(H)\,.
$$
Hence, on imposing the condition that $\delta<1/m$, we have, by \eqref{Psiv}, \eqref{e62} and the assumption $n>d$, that
\begin{align}
  \nonumber|F_j(x_j)|&\ll H^{2\delta}(\Psi(H)\, H^{n-m})^{-\frac1{m-1}}\\[1ex]
  \nonumber&\le  H^{2\delta}(H^{-(\ell_0+1)\delta} H^{n-m})^{-\frac1{m-1}} \\[1ex]
  & = H^{-\frac{n-m-\ell_0\delta-\delta(2m-1)}{m-1}}\qquad(2\le j\le m)\,. \label{k1}
\end{align}
Regarding $\hat F$ we have similarly that
\begin{align}
|\hat F_j(x_j)|\ll H^{-\frac{n-m-\ell_0\delta-\delta(2m-1)}{m-1}}\qquad(2\le j\le m)\,. \label{k2}
\end{align}

Now, since $\tilde\sigma_1(F,t_1)=\sigma_1(F,t_1)$ and $x_1\in \sigma_1(F,t_1)$, we plainly have that
\begin{equation}\label{k2+}
  |F_1(x_1)|\le \Psi(H)\le H^{-\ell_0\delta},\qquad
  |F'_1(x_1)|\le H^{\ell_{1,1}\delta}.
\end{equation}
Finally, we estimate $|\hat F_1(x_1)|$ and $|\hat F'_1(x_1)|$. At this point we need the following improved estimate for the size of $\sigma_1(F,t_1)$, which follows from Lemma~\ref{lem3}, \eqref{eq40q}, \eqref{eq46}, \eqref{Psiv} and \eqref{eq87}:
\begin{equation}\label{hj}
|\sigma_1(F,t_1)|\ll
\min\left\{\min_{1\le i\le N_1}H^{-\frac{(\ell_0+\ell_{1,i}-1)\delta}{i}},~\min_{2\le i\le N_1}H^{\frac{(\ell_{1,1}-\ell_{1,i}+1)\delta}{i-1}}\right\},
\end{equation}
where $\ell_{N_1}$ is formally defined to satisfy $\ell_{N_1}\delta=1$ (not necessarily an integer).
Since $\sigma_1(F,t_1)\cap\sigma_1(\hat F,\hat t_1)\neq\emptyset$, there is a point, say $y_1$ in this intersection. Since $\sigma_1(F,t_1)$ is an interval, we have that $|x_1-y_1|\le |\sigma_1(F,t_1)|$. Hence, using Taylor's formula, we obtain the following estimate:
\begin{align}
\nonumber|\hat F'_1(x_1)|&\ll\sum_{i=1}^{N_1}|\hat F_1^{(i)}(y_1)|\cdot|x_1-y_1|^{i-1}\\[0ex]
\nonumber&\stackrel{\eqref{eq87}}{\ll}~\sum_{i=1}^{N_1}H^{\ell_{1,i}\delta}\cdot|\sigma_1(F,t_1)|^{i-1}\\[0ex]
\nonumber&\stackrel{\eqref{hj}}{\ll}~\sum_{i=1}^{N_1}H^{\ell_{1,i}\delta}\cdot
H^{(\ell_{1,1}-\ell_{1,i}+1)\delta}\\[0ex]
&\ll ~H^{(\ell_{1,1}+1)\delta}\,.\label{k3}
\end{align}
Similarly, we get that
\begin{align}
\nonumber|\hat F_1(x_1)|&\ll\sum_{i=0}^{N_1}|\hat F_1^{(i)}(y_1)|\cdot|x_1-y_1|^{i}\\[0ex]
\nonumber&\stackrel{\eqref{eq87}}{\ll}~\sum_{i=1}^{N_1}H^{\ell_{1,i}\delta}\cdot|\sigma_1(F,t_1)|^{i}\\[0ex]
\nonumber&\stackrel{\eqref{hj}}{\ll}~\sum_{i=1}^{N_1}H^{\ell_{1,i}\delta}\cdot
H^{-(\ell_{0}+\ell_{1,i}-1)\delta}\\[0ex]
&\ll ~H^{-(\ell_{0}-1)\delta}\,.\label{k4}
\end{align}

Now define $R=F-\hat F$. Since $F$ and $\hat F$ share the same coefficient of $a_n$, namely $H$, it is cancelled in the difference and we get that
$$
R_j(x_j)=b_{0}f_{j,0}(x_j)+\dots+b_{n-1}f_{j,n-1}(x_j)\qquad(1\le j\le m),
$$
where  $\vv b=(b_0,\dots,b_{n-1})\in\Z^{n}\setminus\{\vv0\}$ and $|\vv b|_\infty\le 2H$. By \eqref{k1}, \eqref{k2}, \eqref{k2+}, \eqref{k3} and \eqref{k4} we get that
\begin{align}
 |R_1(x_1)| & \ll ~H^{-(\ell_0-1)\delta},\label{w1}\\[1ex]
 |R'_1(x_1)| & \ll H^{(\ell_{1,1}+1)\delta},\label{w2}\\[1ex]
 |R_j(x_j)| & \ll H^{-\frac{n-m-\ell_0\delta-\delta(2m-1)}{m-1}}\qquad(2\le j\le m)\label{w3}\,.
\end{align}
Since, by our assumption, $\vv x$ lies in infinitely many inessential domains, \eqref{w1}--\eqref{w3} hold for infinitely many $H$ and some $R$ as above. If $x_1$ s a root of some $R$ like that, then it must lie in a countable set (this set is made of roots of a countable family of analytic functions). Therefore, $\vv x$ lies in a set of measure zero. Otherwise, we must have infinitely many different $R$ satisfying \eqref{w1}--\eqref{w3}. In this remaining case we can once again appeal to Theorem~\ref{thm-2} with $\cF$ being replaced by the collection of all the maps $R$ defined above and exponents
\[
 \begin{array}{l}
v_1=\ell_0\delta-\delta,\qquad\qquad~~ v'_1=-\ell_{1,1}\delta-\delta,\\[1ex]
v_j=\frac{n-m-\ell_0\delta-\delta(2m-1)}{m-1},\quad v'_j=-1\qquad (2\le j\le m).
 \end{array}
\]

Since $n$ becomes smaller by one (due to the cancellation of $a_n$'s), condition \eqref{ve} will read as follows
$$
v_1+\dots+v_m+v'_1+\dots+v'_m>n-2m
$$
and is the only remaining thing to justify the use of Theorem~\ref{thm-2}. To verify it we use \eqref{l-2} (recall that $j_0=1$):
\begin{align*}
v_1+&\dots+v_m+v'_1+\dots+v'_m\\[1ex]
&=\ell_0\delta-\delta+n-m-\ell_0\delta-\delta(2m-1)-\ell_{1,1}\delta-\delta-(m-1)\\[1ex]
&=n-2m+1-(2m+1)\delta-\ell_{1,1}\delta\\[1ex]
&\ge n-2m+\tfrac12-(2m+1)\delta>n-2m
\end{align*}
provided that $\delta<\frac{1}{2(2m+1)}$. Thus, the use of Theorem~\ref{thm-2} is justified and we conclude that
$\vv x$ lies in a set of measure zero. This completes the proof of Proposition~\ref{p:9} and completes the proof of the convergence case of Theorem~\ref{t2}.
\end{proof}

\section{The divergence case}\label{sec4}

Here we prove the following more general divergence result for Hausdorff measures. In what follows,
by a dimension function $g$ we mean a continuous monotonically increasing function defined on $(0,+\infty)$ such that $\lim_{r\to0^+}g(r)=0$. Also, $\cH^g$ will denote the $g$-dimensional Hausdorff measure -- see \cite{BDV06} and references within for further details.

\begin{theorem}\label{t2+}
Let $n\ge m\ge1$ be integers.
Let $U=U_1\times\dots\times U_m\subset\R^d$, $\vv f_1,\dots,\vv f_m$, $\cF=\cF(\vv f_1,\dots,\vv f_m)$, $\Psi$ and $\cL(\cF,\Psi)$ be as in Theorem~\ref{t2} and $\Psi$ be monotonic. Suppose that for each $j=1,\dots,m$ the coordinate functions $f_{j,0},\dots,f_{j,n}$ of the map $\vv f_j$ are analytic and linearly independent over $\R$. Further let $g$ be any dimension function such that $\tilde g(r):=r^{-d+m}g(r)$ is increasing and $r^{-m}\tilde g(r)$ is non-increasing. Then
\begin{equation}\label{vb90}
  \cH^g(\cL(\cF,\Psi))=\cH^g(U) \qquad \text{if ~~$\sum_{h=1}^\infty h^n\,\tilde g\Big(\frac{\Psi(h)}{h}\Big)=\infty$}\,.
\end{equation}
\end{theorem}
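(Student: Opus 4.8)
The plan is to prove the divergence part of Theorem~\ref{t2+} by constructing a \emph{regular system} of points (or, equivalently, establishing a local ubiquity statement) associated with the family $\cF$, and then feeding it into the Mass Transference Principle of Beresnevich--Velani so as to pass from the Lebesgue-measure divergence statement (which underlies Theorem~\ref{t2}) to the full $\cH^g$-measure statement. By Remark~\ref{rem3} and the Fibering Lemma we may assume $d_1=\dots=d_m=1$, so $U$ is a cube in $\R^m$; as in the convergence case we may also assume the Wronskians \eqref{vb12} and the determinants \eqref{vb12+++}, \eqref{e:043} are bounded away from zero on $U$, and that $M$ in \eqref{M} is finite. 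First I would reduce to the ``resonant set'' description: for $F\in\cF$ with $H(F)=H$, the set $\sigma(F)$ of $\vv x=(x_1,\dots,x_m)$ with $|F_j(x_j)|<\Psi(H)$ for all $j$ is, near a point where all the relevant derivatives are non-degenerate, contained in a neighbourhood of an $(m-?)$-dimensional analytic ``resonant set''; but in fact, because we have $m$ independent one-variable conditions $|F_j(x_j)|$ small, the natural resonant object is a \emph{point} (or a finite union of points) $\vv\alpha(F)=(\alpha_1(F),\dots,\alpha_m(F))\in U$ obtained by solving $F_j(\alpha_j)=0$, and $\sigma(F)$ is comparable to the box $\prod_{j=1}^m\{|x_j-\alpha_j(F)|\ll \Psi(H)/|F_j'(\alpha_j(F))|\}$. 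This is exactly the setting of a regular/ubiquitous system of \emph{points} in $\R^m$ with an anisotropic scaling in each coordinate governed by $|F_j'|$.

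The key steps, in order, are as follows. \textbf{Step 1: the resonant points and their counting.} For each $H$ enumerate the admissible $F\in\cF_n(H)$ and their roots $\vv\alpha(F)\in U$; using the non-vanishing of the Wronskian one shows, as in \cite{bu1,Ber02}, that these points are $\asymp H^{-1-1/\,?}$-separated in a suitable anisotropic sense and that their number in any fixed ball $B\subset U$ is $\gg H^{n+1}\lambda_m(B)$ once $H$ is large (this is the hard counting/separation input and is where I expect the main work to lie). \textbf{Step 2: local ubiquity.} Combining Step~1 with the \emph{divergence Borel--Cantelli} ideas already used in proving Theorem~\ref{t2}'s divergence case (or, more precisely, with the ubiquity framework of \cite{BDV06}), deduce that the system $\{\vv\alpha(F): F\in\cF\}$ is locally ubiquitous relative to the function $\rho(H)=H^{-(n+1)/m}$ — i.e.\ for any ball $B\subset U$ and large $Q$, the $\rho(Q)$-neighbourhood of $\bigcup_{Q\le H(F)<2Q}\{\vv\alpha(F)\}$ covers a fixed proportion of $B$. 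Here the anisotropy (different $|F_j'(\alpha_j)|$ in different coordinates) must be handled: one partitions $\cF_n(H)$ according to the dyadic sizes of each $|F_j'(\alpha_j)|$ and argues on each class, exactly paralleling the ``big derivatives'' and ``essential domain'' dichotomy of Propositions~\ref{p:5} and \ref{p:9}. \textbf{Step 3: the slicing/Mass Transference argument.} With ubiquity in hand, the set $\cL(\cF,\Psi)$ contains a $\limsup$ set of rectangles $\prod_j\{|x_j-\alpha_j(F)|<\Psi(H)/|F_j'(\alpha_j)|\}$ centred at the ubiquitous points. Apply the Mass Transference Principle for rectangles-to-balls (or iterate the one-dimensional MTP coordinate by coordinate, using Fubini, as in \cite{BV10}/\cite{BBV13}) with the gauge $g$: the hypotheses that $\tilde g(r)=r^{-d+m}g(r)$ is increasing and $r^{-m}\tilde g(r)$ is non-increasing are precisely what is needed to match the product structure of the rectangles with the ball-scaling in MTP, and the divergence sum $\sum_h h^n\,\tilde g(\Psi(h)/h)=\infty$ is exactly the transformed divergence condition. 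One concludes $\cH^g(\cL(\cF,\Psi)\cap B)=\cH^g(B)$ for every ball $B$, hence $\cH^g(\cL(\cF,\Psi))=\cH^g(U)$.

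There is a subtlety worth flagging: $F$ need not be irreducible and its real roots $\alpha_j(F)$ need not lie in $U_j$, and a single $F$ may contribute several resonant points. This is dealt with exactly as in the polynomial case — one works only with $F$ for which a root lies in (a slight shrinking of) $U_j$, uses Lemma~\ref{lem3} and \eqref{eq46} to control the length of $\sigma_j(F)$ in terms of $\Psi(H)/|F_j'(\alpha_j)|$, and absorbs the bounded multiplicity into the implied constants; the lower bound $v_j'\ge-1$ range and the estimate $|F_j'(x_j)|\ll H(F)$ from \eqref{eq60b} keep everything within admissible ranges. The genuinely hard part is Step~1: obtaining the lower bound $\gg H^{n+1}\lambda_m(B)$ for the number of well-separated admissible resonant points of height $\asymp H$ in a ball, uniformly over the dyadic derivative-size classes, since this is what makes the ubiquity (and hence the divergence) work; for $m=1$ this is supplied by \cite{bu1}, and for $m\ge2$ it is obtained by combining the one-variable regular-system estimates with the non-degeneracy conditions \eqref{vb12+++}--\eqref{e:043} and a Fubini argument across the coordinates, in the same spirit as the ``essential vs inessential domains'' analysis of Proposition~\ref{p:9} run in reverse.
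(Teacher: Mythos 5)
Your overall architecture --- build a ubiquitous system of resonant points in $U$, transfer from Lebesgue measure to $\cH^g$ via a BDV06-type mass-transference statement, and reduce the general $d>m$ case to $d_1=\dots=d_m=1$ via the Fibering Lemma and then lift back with a Fubini-for-Hausdorff-measure statement --- is indeed what the paper does. The Fibering Lemma (Lemma~\ref{FL}) and the Slicing Lemma (Lemma~\ref{slicing2}) are precisely the two tools used in \S4.3, and Lemma~\ref{l:01} (from \cite{BDV06,Paris}) plays the role you assign to the Mass Transference Principle.

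However, the method you propose for establishing local ubiquity (your Steps 1--2) is genuinely different from the paper's and, as described, has a serious gap. You plan a counting argument: enumerate the roots $\vv\alpha(F)$ of height $\asymp H$, show a separation of order $H^{-1-1/?}$ (you leave the exponent unfilled), and show there are $\gg H^{n+1}\lambda_m(B)$ of them in any fixed ball, uniformly over dyadic classes of the derivative sizes $|F_j'(\alpha_j(F))|$. This is the classical regular-systems route, and for general analytic non-degenerate $\vv f_j$ such a counting theorem is not available off the shelf; it is exactly the kind of statement one would have to work hard to prove. The paper's Proposition~\ref{ublem} sidesteps this entirely. Starting from the pigeonhole/Minkowski inequality \eqref{mink}, which for every $\vv x$ produces some $F$ with $H(F)\le\delta_0 2^t$ and all $|F_j(x_j)|$ small, it invokes Propositions~\ref{p:1}--\ref{p:3} (the effective Kleinbock--Margulis measure estimate for the sublevel sets $\cA(G,\bmtheta)$) to show that the set $\cA_{\eta,t}$ of $\vv x$ for which some derivative $|F'_{j_0}(x_{j_0})|<\eta C 2^t$ has proportional measure $O(\eta^{\alpha/(n+1)})$, hence can be made smaller than $\tfrac12$ of any ball by choosing $\eta$ small. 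Outside $\cA_{\eta,t}$ \emph{all} derivatives satisfy $|F_j'(x_j)|\gg H(F)$, and Taylor's formula then produces a common zero $(\gamma_1,\dots,\gamma_m)$ within distance $\rho(2^t)\asymp 2^{-(n+1)t/m}$ of $\vv x$ in each coordinate. No counting, no separation bound, no lower bound on the number of resonant points is ever proved: ubiquity is obtained purely as a covering statement.

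Relatedly, the anisotropy concern you flag --- that different coordinates have different scales $\Psi(H)/|F_j'(\alpha_j)|$, requiring a dyadic-class decomposition paralleling the essential/inessential-domain dichotomy of Proposition~\ref{p:9} --- is a red herring in the divergence case. After conditioning away $\cA_{\eta,t}$ the derivatives are all of the same order $\asymp H$, so the resonant neighbourhoods are (up to constants) isotropic supremum-norm balls of radius $\Psi(H)/H$. That is why the paper feeds the ubiquity statement into Lemma~\ref{l:01} with $\Phi(h)=\Psi(h)/h$ directly, with no rectangle-to-ball transference and no coordinate-by-coordinate iteration of a one-dimensional MTP. The essential/inessential machinery belongs entirely to the convergence case. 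You should recognise that the effective measure estimate already encoded in Proposition~\ref{p:3} is exactly what delivers the ubiquity hypothesis here, and that your Step~1 as stated would require substantial new work to justify.
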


\medskip

It is well known that for any Lebesgue measurable set $X\subset\R^d$ and $g(r)=r^d$ we have that $\cH^g(X)=c_d\lambda_d(X)$, where $c_d$ is a fixed positive constant. Hence, as is easily seen, Theorem~\ref{t2+} contains the divergence case of Theorem~\ref{t2}. Furthermore, Theorem~\ref{t2+} is also applicable to the case of convergence within Theorem~\ref{t2}, giving us a non-trivial lower bound on the size of the corresponding null sets. Indeed, applying Theorem~\ref{t2+} with $g(r)=r^s$ for some $s>0$ gives the following corollary regarding the Hausdorff dimension of $\cL(\cF,\Psi)$.

\begin{corollary}\label{cor1}
Under the same conditions as in Theorem~\ref{t2+}, we have that
\begin{equation}\label{lb}
\dim \cL(\cF,\Psi)\ge \min\Big\{d,~\frac{n+1}{\tau_\Psi+1}+d-m\Big\}\,,
\end{equation}
where
$$
\tau_\Psi=\liminf_{h\to\infty}\frac{-\log\Psi(h)}{\log h}
$$
is the lower order of $1/\Psi$ at infinity. In particular, if $\Psi_\tau(h)=h^{-\tau}$ with $\tau>\frac{n+1-m}{m}$ we have that
$$
\dim \cL(\cF,\Psi_\tau)\ge \frac{n+1}{\tau+1}+d-m\,.
$$
\end{corollary}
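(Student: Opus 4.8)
The statement to prove is Corollary~\ref{cor1}, which deduces a lower bound on $\dim\cL(\cF,\Psi)$ from Theorem~\ref{t2+}. The plan is to apply Theorem~\ref{t2+} with the family of dimension functions $g(r)=r^s$ for suitable $s$ and verify that the divergence hypothesis holds, so that $\cH^s(\cL(\cF,\Psi))=\cH^s(U)=\infty$, which in turn forces $\dim\cL(\cF,\Psi)\ge s$.

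\medskip

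First I would deal with the monotonicity constraints that Theorem~\ref{t2+} imposes on $g$. With $g(r)=r^s$ we have $\tilde g(r)=r^{s-d+m}$, so $\tilde g$ is increasing precisely when $s\ge d-m$, and $r^{-m}\tilde g(r)=r^{s-d}$ is non-increasing precisely when $s\le d$. Hence Theorem~\ref{t2+} is available for any $s\in[d-m,d]$. If $d-m\le 0$ the left endpoint should be replaced by $0$ and one notes $\dim\cL\ge 0$ trivially, so the interesting range is $s\in(\max\{0,d-m\},d]$; and of course if the right-hand side of \eqref{lb} equals $d$ we only need the trivial bound $\dim\cL(\cF,\Psi)\le d$ together with $\cH^d(\cL)=\cH^d(U)>0$, which already follows from the divergence case of Theorem~\ref{t2}. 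So assume $\tfrac{n+1}{\tau_\Psi+1}+d-m<d$, i.e.\ $\tau_\Psi>\tfrac{n+1-m}{m}$, and fix $s<\tfrac{n+1}{\tau_\Psi+1}+d-m$; this guarantees $s<d$ and (after possibly enlarging $s$) $s>d-m$.

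\medskip

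The core step is to check the divergence of the series in \eqref{vb90}, namely $\sum_{h=1}^\infty h^n\,\tilde g(\Psi(h)/h)=\sum_{h=1}^\infty h^n (\Psi(h)/h)^{s-d+m}=\sum_{h=1}^\infty h^{n-s+d-m}\Psi(h)^{s-d+m}$. Write $u=s-d+m$, so $0<u\le m$ by the range of $s$, and the series is $\sum_h h^{n-u}\Psi(h)^u$. Since $s<\tfrac{n+1}{\tau_\Psi+1}+d-m$ we have $u<\tfrac{n+1}{\tau_\Psi+1}$, equivalently $\tau_\Psi<\tfrac{n+1-u}{u}$, equivalently $\tau_\Psi+1<\tfrac{n+1}{u}$, so there exists $\eta>0$ with $\tau_\Psi+1+\eta<\tfrac{n+1}{u}$. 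By the definition of $\tau_\Psi$ as a $\liminf$, for infinitely many $h$ we have $\Psi(h)\ge h^{-\tau_\Psi-\eta}$; but since $\Psi$ is monotonic (decreasing, as it must be for the sum $S_{n,m}(\Psi)$ to be meaningful and for the divergence case) this in fact forces $\Psi(h)\ge h^{-\tau_\Psi-\eta}$ for all $h$ along a sequence of dyadic-type blocks, and a standard Cauchy-condensation / monotonicity argument then gives $\sum_h h^{n-u}\Psi(h)^u\ge \sum_h h^{n-u}h^{-u(\tau_\Psi+\eta)}=\sum_h h^{\,n-u(\tau_\Psi+1+\eta)}$, whose exponent $n-u(\tau_\Psi+1+\eta)>n-u\cdot\tfrac{n+1}{u}=-1$, so the series diverges. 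Hence Theorem~\ref{t2+} applies and yields $\cH^s(\cL(\cF,\Psi))=\cH^s(U)$; since $U$ is an open set in $\R^d$ and $s<d$, $\cH^s(U)=\infty$, so $\dim\cL(\cF,\Psi)\ge s$. Letting $s\uparrow \tfrac{n+1}{\tau_\Psi+1}+d-m$ gives \eqref{lb}. The final sentence of the corollary is the special case $\Psi=\Psi_\tau$, for which $\tau_\Psi=\tau$ and, when $\tau>\tfrac{n+1-m}{m}$, the minimum in \eqref{lb} is attained by the second term, giving $\dim\cL(\cF,\Psi_\tau)\ge \tfrac{n+1}{\tau+1}+d-m$.

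\medskip

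The only genuinely delicate point — and the one I would spell out carefully — is the monotonicity argument converting the $\liminf$ defining $\tau_\Psi$ into divergence of an honest series: one must be careful that $\liminf$ only controls $\Psi$ from below along a subsequence, and it is the monotonicity of $\Psi$ that lets one propagate the lower bound to a positive-density set of $h$ (or to condensation blocks) so that the comparison series genuinely diverges. Everything else is bookkeeping: matching the exponents in $\tilde g(\Psi(h)/h)$ to the quantity $\tfrac{n+1}{\tau_\Psi+1}+d-m$, checking the two monotonicity conditions on $g(r)=r^s$, and invoking $\cH^s(U)=\infty$ for $s<d=\dim U$.
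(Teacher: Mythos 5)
Your proposal is correct and follows exactly the route the paper intends for Corollary~\ref{cor1}: apply Theorem~\ref{t2+} with $g(r)=r^s$, observe that the two monotonicity constraints confine $s$ to $(\max\{0,d-m\},d]$, and use the monotonicity of $\Psi$ to turn the $\liminf$ defining $\tau_\Psi$ into divergence of $\sum_h h^{n-u}\Psi(h)^u$ with $u=s-d+m<\tfrac{n+1}{\tau_\Psi+1}$, whence $\cH^s(\cL(\cF,\Psi))=\cH^s(U)=\infty$ and $\dim\cL(\cF,\Psi)\ge s$ for every admissible $s$ below the claimed bound. Two small points worth tightening: the displayed comparison $\sum_h h^{n-u}\Psi(h)^u\ge\sum_h h^{n-u(\tau_\Psi+1+\eta)}$ is not a termwise inequality and should be phrased as unboundedness of partial sums over blocks $[h_k/2,h_k]$ where the $\liminf$ is nearly attained (monotonicity of $\Psi$ gives $\Psi(h)\gg h^{-\tau_\Psi-\eta}$ on each such block); and at the borderline $\tau_\Psi=\tfrac{n+1-m}{m}$ the Lebesgue divergence case of Theorem~\ref{t2} need not apply (the sum $S_{n,m}(\Psi)$ may converge), but your main argument run for all $s<d$ already yields $\dim\ge d$ there, so no extra appeal is needed.
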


\bigskip

\begin{remark}
In the case of one linear form, that is $m=1$, the above corollary follows from a result of Dickinson and Dodson \cite{DD00}. In the case $m=1$, $n=2$ the corresponding upper bound was found by Baker \cite{RBaker}. More recently Huang \cite{Huang} proved a more precise version of Baker's result for Hausdorff measures that involves the convergence of a sum as in \eqref{vb90}. Also in the case of polynomials, that is when $m=1$, $d=1$, $n\ge2$ and $\vv f(x)=(1,x,\dots,x^n)$, the lower bound \eqref{lb} is a result of Baker and Schmidt \cite{BS}. Furthermore, in this latter case we also have the corresponding upper as a result of \cite{Bernik}. In the general case, establishing upper bounds complementary to \eqref{lb} remains a challenging open problem. Part of this problem is to prove the following
\end{remark}

\bigskip

\noindent\textbf{Conjecture:}
Under the same conditions as in Theorem~\ref{t2+}, one should have that
$$
\dim \cL(\cF,\Psi)=\min\Big\{d,~\frac{n+1}{\tau_\Psi+1}+d-m\Big\}\,.
$$

\subsection{Ubiquity}

In this subsection we discuss the concept of Ubiquity defined in \cite{BDV06} and state a key lemma regarding ubiquitous systems that will be instrumental in the proof of Theorem~\ref{t2+}. First, we recall the basic definitions from \cite{BDV06} in a simplified form necessary for the application that we have in mind. In what follows:

\begin{itemize}
  \item $\Omega$ is a closed ball in $\R^{m}$;
  \item $\cR:=(\ra)_{\al\in J}$ is a family of points $\ra$ in $\Omega$
  (usually referred to as \emph{resonant points}) indexed by a countable set $J$;
  \item $\beta:J\to \Rp:\alpha\mapsto\ba$ is a function on $J$, which attaches a `weight' $\ba$ to resonant points $\ra$;
  \item $J(t):=\{\al \in J:\ba\le 2^t\}$ is assumed to be finite for any $t\in\N$;
  \item $\rho: \Rp \to\Rp$ is a function such that $\lim\limits_{r\to\infty}\rho(r)=0$ referred to as the {\em ubiquity function};
  \item $B(\vv x,r)$ is a ball in $\Omega$ centred at $\vv x\in\Omega$ of radius $r>0$ defined using the supremum norm. Note that, by definition, $B(\vv x,r)$ consists of points in $\Omega$ only.
\end{itemize}

\begin{definition}\label{rs}\label{US}\rm
The pair $(\cR;\beta)$ is called
{\em a locally ubiquitous system in $\Omega$ relative to $\rho$} if there is an
absolute constant $k_0>0$ such that for any ball $B$ in $\Omega$
\begin{equation}\label{e:018}
\lambda_{m}\Big(\,\bigcup_{\al\in J(t)}
B\big(\ra,\r(2^t)\big)\cap B\Big) \ \ge \  k_0 \, \lambda_m(B)
\end{equation}
for all sufficiently large $t$.
\end{definition}

\medskip

Given a function $\Phi:\Rp\to\Rp$, let
$$
\Lambda_\cR(\Phi) \ := \ \{\vv x\in \Omega:|\vv x-\ra|_\infty<\Phi(\ba)
\ \mbox{holds for\ infinitely\ many\ }\al \in J \} \,.
$$
The following lemma follows from Theorems~1 and 2 of
\cite{BDV06} (the parameter $\gamma$ should be taken to be $0$ in these theorems) and can also be found as Theorem~1 in \cite{Paris}.

\begin{lemma}\label{l:01}
Let $\Phi:\Rp\to\Rp$ be a monotonic function and $\Omega,J,\cR,\beta,\rho$ be as above. Suppose that $(\cR,\beta)$ be a locally ubiquitous system in $\Omega$ relative to $\rho$. Let $\tilde g$ be a dimension function such that $r^{-m}\tilde g(r)$ is non-increasing. Suppose further that
\begin{equation}\label{reg}
\limsup_{t\to\infty}  \frac{\rho(2^{t+1})}{\rho(2^t)}<1\,.
\end{equation}
Then
\begin{equation}\label{e:019}
 \cH^{\tilde g}\big(\Lambda_\cR(\Phi)\big) \
= \
\cH^{\tilde g}(\Omega)\qquad\text{if}\qquad\sum_{t=1}^{\infty}\frac{\tilde g(\Phi(2^t))}{\rho(2^t)^{\ddd}}\
= \ \infty \, .
\end{equation}
\end{lemma}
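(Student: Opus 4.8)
The plan is to obtain Lemma~\ref{l:01} by a direct appeal to the general ubiquity theory of \cite{BDV06}, specialised to the concrete situation at hand. The framework of \cite{BDV06} is set up for a compact metric measure space together with a family of \emph{resonant sets} (allowed to carry positive dimension, controlled by a parameter $\gamma\ge0$), a ubiquity function $\rho$, and a weight function $\beta$. In our setting the resonant sets are the single points $\ra$, so the relevant value of the parameter is $\gamma=0$; the ambient space is a closed ball $\Omega\subset\R^m$ equipped with the supremum metric and Lebesgue measure $\lambda_m$. The first step is therefore essentially bookkeeping: one checks that $(\Omega,|\cdot|_\infty,\lambda_m)$ satisfies the metric and measure axioms demanded in \cite{BDV06} — it is Ahlfors $m$-regular (balls of radius $r$ have measure $\asymp r^m$), the measure is doubling, and the elementary ``a ball contains a sub-ball of comparable radius'' property holds — and that, upon setting $\gamma=0$, Definition~\ref{US} reproduces verbatim the notion of a locally ubiquitous system used in \cite{BDV06}.

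Once the two frameworks have been identified, the second step is to match the hypotheses of Lemma~\ref{l:01} against those of the divergence part of that theory: (i) $\Phi$ is monotonic; (ii) the dimension function $\tilde g$ satisfies that $r\mapsto r^{-m}\tilde g(r)$ is non-increasing, which is precisely the admissibility condition imposed on dimension functions in \cite{BDV06} when the resonant sets are points in an $m$-dimensional space; (iii) $\rho$ obeys the regularity requirement \eqref{reg}, namely $\limsup_{t\to\infty}\rho(2^{t+1})/\rho(2^t)<1$, which is one of the admissible regularity conditions on the ubiquity function in \cite{BDV06} (it forces $\rho$ to decay at least geometrically along the sequence $2^t$); and (iv) $\sum_{t}\tilde g(\Phi(2^t))/\rho(2^t)^m=\infty$, which for $\gamma=0$ is exactly the shape taken by the general divergence sum of \cite{BDV06}. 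Granting (i)--(iv), the conclusion $\cH^{\tilde g}(\Lambda_\cR(\Phi))=\cH^{\tilde g}(\Omega)$ follows at once from Theorems~1 and~2 of \cite{BDV06} (the degenerate case $\cH^{\tilde g}(\Omega)=0$ being trivial, since $\Lambda_\cR(\Phi)\subset\Omega$). Alternatively, the statement is recorded in exactly this specialised form as Theorem~1 of \cite{Paris} and may simply be quoted from there.

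No analytic work is involved, so the only step I expect to require genuine care — and where the real substance of the argument lies — is the translation of conventions: one must confirm that setting $\gamma=0$ in \cite{BDV06} really collapses the resonant-set definitions to the point-mass Definition~\ref{US} with the divergence sum $\sum_t\tilde g(\Phi(2^t))\,\rho(2^t)^{-m}$, leaving no residual $\gamma$ in the exponent of $\rho$; that \eqref{reg} genuinely belongs to the list of permissible regularity hypotheses on $\rho$ rather than being strictly stronger than needed; and that the admissibility condition on $\tilde g$ coincides with the one stated. All of this is routine to verify against \cite{BDV06} and \cite{Paris}, but it is precisely where the content of the reduction resides.
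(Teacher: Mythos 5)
Your proposal matches the paper exactly: the lemma is stated there as following from Theorems~1 and~2 of \cite{BDV06} with the parameter $\gamma$ set to $0$, and alternatively as Theorem~1 of \cite{Paris}, which is precisely the reduction you describe. The additional bookkeeping you outline (verifying the measure/metric axioms, the admissibility of $\tilde g$, and the regularity condition \eqref{reg}) is the routine translation the paper leaves implicit, so the two arguments coincide.
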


\bigskip

We now establish a specific example of a ubiquitous system that will be used in the proof of Theorem~\ref{t2+}.

\begin{proposition}\label{ublem}
Let $n\ge m\ge1$ be integers, $d_1=\dots=d_m=1$ so that $U_1$,\dots,$U_m$ are intervals in $\R$.
Let $U=U_1\times\dots\times U_m\subset\R^m$, $\vv f_1,\dots,\vv f_m$, $\cF=\cF(\vv f_1,\dots,\vv f_m)$ be as in Theorem~\ref{t2+} and suppose that \eqref{M} is satisfied. Then, for almost every $\vv x_0\in U$ and any fixed $0<\delta_0\le 1$ there exists a closed ball $\Omega\subset U$ centred at $\vv x_0$ and a constant $\eta>0$ such that $(\cR,\beta)$ is locally ubiquitous in $\Omega$ relative to $\rho$, where
\begin{align*}
&J=\big\{(F,\gamma_1,\dots,\gamma_m)\in\cF\times \Omega~:~F_1(\gamma_1)=\dots=F_m(\gamma_m)=0\big\}\,,\\[1ex]
&\cR=\big\{R_\alpha=(\gamma_1,\dots,\gamma_m)\in \Omega:\alpha=(F,\gamma_1,\dots,\gamma_m)\in J\big\},\\[1ex]
&\beta_\alpha=\delta_0^{-1}H(F)\quad\text{for }\alpha=(F,\gamma_1,\dots,\gamma_m)\in J\,,\\[1ex]
&\rho(r)=\tfrac2\eta r^{-\frac{n+1}{m}}\,.
\end{align*}
\end{proposition}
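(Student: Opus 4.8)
\textbf{Proof strategy for Proposition~\ref{ublem}.}
The plan is to establish the ubiquity inequality \eqref{e:018} by a counting-plus-measure argument combined with the effective measure estimates of Proposition~\ref{p:3}. First I would fix a generic point $\vv x_0\in U$, meaning one at which $\det G(\vv x_0)\ne0$ for the matrix $G$ built from $\vv f_1,\dots,\vv f_m$ with a suitable choice of the orders $\ell_j$ (say $\ell_1=n+1-m$, $\ell_2=\dots=\ell_m=0$, or whichever distribution is convenient), and at which the Wronskian conditions \eqref{vb12}--\eqref{e:043} hold; by Proposition~\ref{p:2} and analyticity this is a full-measure set of $\vv x_0$. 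Around such $\vv x_0$ Proposition~\ref{p:3} gives a ball $B_0$ and constants $K_0,\alpha$ so that for any ball $B\subset B_0$ and any admissible $\bmtheta$ the bound \eqref{e:086+vb} holds. I would take $\Omega$ to be a small closed ball centred at $\vv x_0$ with $\Omega\subset B_0$.

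The heart of the argument is a lower bound on the number of resonant points $\ra=(\gamma_1,\dots,\gamma_m)$ with $\ba\le 2^t$ lying in a given sub-ball $B\subset\Omega$, i.e. a count of integer vectors $\vv a$ with $H\asymp 2^t\delta_0$ for which the system $F_j(\gamma_j)=0$ $(1\le j\le m)$ has a solution $\bm\gamma$ with $(\gamma_1,\dots,\gamma_m)\in B$. The standard route here is: (i) an \emph{upper} bound, via Proposition~\ref{p:3} applied with $\theta_{j,0}\asymp 2^{-t(n+1-m)/m}$ type choices encoding the inequality $|F_j(x_j)|<\rho(2^t)^{\text{(power)}}$, showing that the points $\vv x\in B$ for which $|F_j(x_j)|$ is simultaneously small for \emph{some} $\vv a$ with $H(F)\le H$ but \emph{no} nearby exact zero, form a set of small measure; (ii) a \emph{lower} bound from the pigeonhole/Minkowski counting: the number of $\vv a$ with $|\vv a|\le H$ such that $|F_j(x_j)|<\rho^{(n+1)/m}$-scale quantities for all $j$ is $\gg H^{n+1}\rho^m$ on average over $\vv x\in B$, and by the linear-independence (non-vanishing Wronskian) hypotheses each such $\vv a$ forces a genuine root $\gamma_j$ of $F_j$ within distance $\ll\rho$ of $x_j$. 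Combining the Taylor-expansion estimate $|x_j-\gamma_j|\ll |F_j(x_j)|/\inf|F_j'|\ll \rho$ with the large-derivative control (from \eqref{vb12} and Lemma~\ref{lem3}-type transference, exactly as in Lemma~\ref{lem5}) yields that $B(\ra,\rho(2^t))$ covers $x_j$. Summing the measures of these balls and subtracting the exceptional set from step (i) gives \eqref{e:018} with an absolute $k_0$, once $\rho(r)=\frac2\eta r^{-(n+1)/m}$ is chosen with $\eta$ calibrated to the implied constants from Proposition~\ref{p:3}, the bound $M$, and $c_0$.

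I expect the main obstacle to be making the two-sided count honest: the upper bound from Proposition~\ref{p:3} controls the measure of the set of $\vv x$ that are $\rho$-approximable by \emph{any} integer $\vv a$ of height up to $H$, but for ubiquity one needs the points approximable by $\vv a$ of height \emph{exactly} of order $H$ (equivalently, one must subtract off the contribution of smaller heights and of degenerate/reducible $\vv a$ for which $F_j$ has no root near $x_j$ or a multiple root). This is handled by a dyadic decomposition in $H$ together with the convergence of $\sum_{H'\le H}(H')^{n+1}\rho^m$ being dominated by its last term (since $(n+1)/m>1$ forces $\rho(2^t)^m 2^{t(n+1)}$ to be of constant order, which is exactly why the exponent $(n+1)/m$ in $\rho$ is the correct one), so the main-term count at scale $2^t$ genuinely dominates. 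A secondary technical point is the passage from $F_j(x_j)$ small to the existence and location of the root $\gamma_j$: here one invokes the lower bound \eqref{eq46}/\eqref{vb12} on a derivative $F_j^{(N_j)}$ together with the intermediate value theorem and Lemma~\ref{lem3} to guarantee a simple root within $\ll\rho$, and one discards the measure-zero set where $x_j$ itself is a common zero of the relevant finite family of analytic functions. With these pieces in place, verifying \eqref{e:018} for all sufficiently large $t$ and all balls $B\subset\Omega$ is routine bookkeeping, and the values of $\Omega$, $\eta$ recorded in the statement are precisely those produced by this calibration.
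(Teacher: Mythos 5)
Your proposal assembles the same three ingredients as the paper's proof --- the pigeonhole/Dirichlet bound \eqref{mink} giving, for \emph{every} $\vv x\in U$, an integer vector $\vv a$ with $|\vv a|_\infty\le\delta_02^t$ and $|\vv a\cdot\vv f_j(x_j)|<C2^{-\frac{n+1-m}{m}t}$; Proposition~\ref{p:3} to show a bad set has small relative measure; and Taylor's formula plus the intermediate value theorem to convert a small value of $F_j$ with a large first derivative into a root $\gamma_j$ within $\frac2\eta 2^{-\frac{n+1}{m}t}$ of $x_j$ --- so the approach is essentially that of the paper. Two points of your framing should nevertheless be corrected. First, the ``heart of the argument'' is not a two-sided count of resonant points, and the ``main obstacle'' you identify does not arise: Definition~\ref{US} only requires $\ba\le 2^t$, not $\ba\asymp2^t$, so the pointwise Dirichlet statement already suffices and no dyadic subtraction of smaller heights is needed. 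Moreover ``summing the measures of these balls and subtracting the exceptional set'' is not, as stated, a lower bound for the measure of the \emph{union} (the balls may overlap heavily); the valid deduction --- which you do have implicitly in ``$B(\ra,\rho(2^t))$ covers $x_j$'' --- is the covering inclusion $\tfrac12B\setminus\cA_{\eta,t}\subset\bigcup_{\al\in J(t)}B(\ra,\rho(2^t))$ combined with $\lambda_m(\tfrac12B\setminus\cA_{\eta,t})\ge2^{-m-1}\lambda_m(B)$, where $\cA_{\eta,t}$ is the bad set. Likewise ``$\gg H^{n+1}\rho^m$ on average over $\vv x\in B$'' should be the pointwise existence statement: an average count $\ge1$ does not guarantee that most points have a solution. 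Second, the bad set has to be pinned down as the set of $\vv x$ whose Dirichlet solution satisfies $|F'_{j_0}(x_{j_0})|<\eta C2^t$ for some $j_0$; the paper encodes this, for each $j_0$, via \eqref{G} with $\ell_{j_0}=n+1-m$ and $\ell_j=0$ otherwise and a $\bmtheta$ for which $\theta\asymp\eta^{1/(n+1)}$, so that \eqref{e:086+vb} makes its measure at most $\tfrac12\lambda_m(\tfrac12B)$ once $\eta$ is small --- this is precisely the calibration of $\eta$ and hence of $\rho$. With these repairs your outline matches the paper's argument.
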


\begin{proof}
To begin with, observe that for any $0\le j\le m$, any $x_j\in U_j$ and any $0\le \ell\le n+1$ we have that
\begin{equation}\label{MM}
  |F^{(\ell)}_j(x_j)|\le (n+1)M H(F)
\end{equation}
for any $F\in\cF$, where $M$ is given by \eqref{M}. Also, using \eqref{M} and the standard pigeonhole argument (see for example \cite[\S{}I.1~and~\S{}II.1]{Schmidt}) one can easily deduce that for any sufficiently large $t\in\N$ and any $(x_1,\dots,x_m)\in U$ there exists $\vv a\in\Z^{n+1}\setminus\{\vv0\}$ such that
\begin{equation}\label{mink}
  \left\{
  \begin{array}{l}
     |\vv a\cdot\vv f_j(x_j)|<C\, 2^{-\frac{n+1-m}{m}t}\,\qquad(1\le j\le m)\\[0.5ex]
     |\vv a|_\infty \le \delta_02^t
  \end{array}
  \right.
\end{equation}
with
$$
C=4(n+1)M\delta_0^{-\frac{n+1-m}{m}}.
$$

\

Given ${j_0}\in\{1,\dots,m\}$, let $G$ be given by \eqref{G} with
$\ell_{j_0}=n+1-m$ and $\ell_j=0$ for $1\le j\le m$ with $j\not={j_0}$.
Further let $\bmtheta$ be given by \eqref{theta}, where
\begin{align*}
\tilde\theta_{j_0} & =(C\, 2^{-\frac{n+1-m}{m}t},\eta C2^{t},\underbrace{C\,2^t,\dots,C\,2^t}_{n-m\text{ times}})\in\R^{n+2-m}\,,\\[1ex]
\tilde\theta_j & =C\, 2^{-\frac{n+1-m}{m}t}\in\R\qquad (1\le j\le m,~j\not={j_0})
\end{align*}
and $\eta>0$ is to be specified later. Note that $G$ and $\bm\theta$ as defined above depend on ${j_0},t$ and $\eta$. They of course also depend on $C$, $m$, $n$ and the maps $\vv f_j$, but these are fixed throughout the proof. Note that for the above choice of $\bm\theta$ the corresponding parameter $\theta$ defined by \eqref{e:083}${}_{k=n+1}$ is as follows
\begin{equation}\label{theta0}
  \theta=C\eta^{\frac{1}{n+1}}\,.
\end{equation}
Further observe that $\tilde\theta_j$ satisfies Property~{\bf M} for each $j\in\{1,\dots,m\}$ and that, by Proposition~\ref{p:2}, the corresponding parameter $\widehat\Theta$ satisfies
\begin{equation}\label{zz}
    \widehat\Theta\le \max\left\{\frac{2^{-\frac{n+1-m}{m}t}}{C^{n}\eta},\frac{1}{C\,2^t}\right\}\,.
\end{equation}
By Proposition~\ref{p:3}, there is a set $S_{j_0}$ of full measure in $U$ such that for every $\vv x_0\in S_{j_0}$ we have that $\det G(\vv x_0)\not=0$ and there exists a ball $B_{{j_0}}(\vv x_0)\subset U$ centred at $\vv x_0$ and constants $K_{j_0},\alpha_{j_0}>0$ such that for any ball $B\subset B_{j_0}(\vv x_0)$ we have that
\begin{equation}\label{useful2}
\lambda_{\ddd}\Big(B\cap\cA_{{j_0},\eta,t}\Big)\le 2K_{j_0}\,(C\eta^{\frac{1}{n+1}})^{\alpha_{j_0}}\,\lambda_{\ddd}(B)
\end{equation}
for all sufficiently large $t$, where
$$
\cA_{{j_0},\eta,t}=\cA(G,\bmtheta)
$$
with $G$ and $\bmtheta$ depending on ${j_0},\eta,t$ are defined above. Note that if $\vv x=(x_1,\dots,x_m)\in B\setminus\cA_{{j_0},\eta,t}$ then $F\in\cF$ that corresponds to the solution $\vv a$ of \eqref{mink} necessarily satisfies the system
\begin{equation}\label{useful}
  \left\{
  \begin{array}{l}
     |F_{j_0}(x_{j_0})|<C\, 2^{-\frac{n+1-m}{m}t}\,,\\[1ex]
     |F'_{j_0}(x_{j_0})|\ge \eta C\,2^t\,,\\[1ex]
     H(F) \le \delta_02^t\,.
  \end{array}
  \right.
\end{equation}

Define $S=\bigcap_{{j_0}=1}^m S_{j_0}$ and $\cA_{\eta,t}=\bigcup_{{j_0}=1}^m\cA_{{j_0},\eta,t}$. Clearly $S$ is a set of full Lebesgue measure in $U$ as the intersection of sets of full measure. Further, for each $\vv x_0\in S$ we define the ball $\Omega$ as a closed ball centred at $\vv x_0$ and contained in $\bigcap_{{j_0}=1}^m B_{j_0}(\vv x_0)$. Then, since any ball $B$ lying inside $\Omega$ will automatically lie in every $B_{j_0}(\vv x_0)$, by the above argument, for any $\vv x=(x_1,\dots,x_m)\in B\setminus\cA_{\eta,t}$ ~$F\in\cF$ that corresponds to the solution $\vv a$ of \eqref{mink} necessarily satisfies the system \eqref{useful} for every ${j_0}\in\{1,\dots,m\}$. By \eqref{useful2}, there is a fixed choice of $\eta>0$, such that  for every ball $B\subset \Omega$
\begin{equation}\label{useful3}
\lambda_{\ddd}\Big(\tfrac12B\setminus\cA_{\eta,t}\Big)\ge \tfrac12\,\lambda_{\ddd}(\tfrac12B)=2^{-m-1}\lambda_m(B)
\end{equation}
for all sufficiently large $t$, where $\tfrac12B$ is the ball $B$ shrunk by a half.

\medskip

Now fix any $\vv x=(x_1,\dots,x_m)\in\tfrac12B\setminus\cA_{\eta,t}$. Since we are using the supremum norm, $B$ is the product of some intervals $I_j\subset U_j$ of equal lengths, that is $B=I_1\times\cdots \times I_m$. In particular, we have that $x_j\in \tfrac12I_j$. By Taylor's formula, for any $\gamma_j\in I_j$ we have that
$$
F_j(\gamma_j)=F_j(x_j)+F'_j(x_j)(\gamma_j-x_j)+\tfrac12F'_j(\tilde x_j)(\gamma_j-x_j)^2\,,
$$
where $\tilde x_j$ is between $\gamma_j$ and $x_j$. It is readily seen using \eqref{MM} and \eqref{useful}${}_{{j_0}=j}$, that for sufficiently large $t$ we have that
$$
F_j(x_j-\tfrac{2}{\eta}\,2^{-\frac{n+1}{m}t})\qquad\text{and}\qquad
F_j(x_j+\tfrac{2}{\eta}\,2^{-\frac{n+1}{m}t})
$$
have opposite signs. Therefore, by continuity, there exists
$$
\gamma_j\in\big[x_j-\tfrac{2}{\eta}\,2^{-\frac{n+1}{m}t},x_j+\tfrac{2}{\eta}\,2^{-\frac{n+1}{m}t}\big]
$$
such that $F_j(\gamma_j)=0$. Note that, since $x_j\in\tfrac12I_j$, for sufficiently large $t$ we have that $\gamma_j\in I_j$. The collection of all such $\gamma_j$ together with $F$ gives rise to an $\alpha=(F,\gamma_1,\dots,\gamma_m)\in J$ such that
\begin{equation}\label{fact}
\vv x=(x_1,\dots,x_m)\in B(R_\alpha,\rho(2^t))\,,
\end{equation}
where $R_\alpha=(\gamma_1,\dots,\gamma_m)\in B\subset \Omega$
and $\rho$ is as defined in the statement. Hence, by \eqref{fact}, we have that
$$
\tfrac12B\setminus\cA_{\eta,t}\subset\bigcup_{\alpha\in J(t)}B(R_\alpha,\rho(2^t))\,.
$$
By \eqref{useful3}, we immediately conclude \eqref{e:018} with $\kappa_0=2^{-m-1}$ for any ball $B$ in $\Omega$ and all sufficiently large $t$. This verifies the ubiquity hypothesis \eqref{e:018}.

Finally, it remain to verify the technical assumptions that $J$ is countable and $J(t)$ is finite. Note that any $F\in\cF$ will have only a finite number of zeros inside $\Omega$ as $\Omega$ is compact and $F$ is analytic. Therefore, since there are only finitely many $F\in\cF$ with $H(F)\le 2^t$, the set $J(t)$ is finite for any $t\in \N$. Finally, $J$ is countable and a countable union of finite sets $J(t)$.
This completes the proof of the proposition.
\end{proof}

\subsection{Proof of Theorem~\ref{t2+}: the case of $d_1=\dots=d_m=1$}\label{onedim}

Within this subsection we assume that $d_1=\dots=d_m=1$ so that $d=m$, $U_1$,\dots,$U_m$ are intervals in $\R$ and $U=U_1\times\dots\times U_m\subset\R^m$. In particular, $\tilde g=g$. Further, let $\vv f_1,\dots,\vv f_m$, $\cF=\cF(\vv f_1,\dots,\vv f_m)$ be as in Theorem~\ref{t2+}. Without loss of generality, while we prove Theorem~\ref{t2+}, we can assume that \eqref{M} is satisfied. Also, in view of the nature of the conclusion of Theorem~\ref{t2+}, it is sufficient to establish \eqref{vb90} with $U$ replaced by an arbitrarily small neighborhood of almost every point $\vv x_0\in U$. In what follows we shall take $\vv x_0$ such as in Proposition~\ref{ublem} and we let $\Omega$ be as in the proposition. Hence, what we need to prove is
that
\begin{equation}\label{vb90+}
  \cH^g(\cL(\cF,\Psi)\cap\Omega)=\cH^g(\Omega) \qquad \text{if ~~$\sum_{h=1}^\infty h^n g\Big(\frac{\Psi(h)}{h}\Big)=\infty$}\,,
\end{equation}
where $\Psi$ is monotonic. Let $\delta_0=((n+1)M)^{-1}$, $\Phi(h)=\Psi(h)/h$ and $J,\cR,\beta$ and $\rho$ be as in Proposition~\ref{ublem}. Then, $(\cR,\beta)$ is locally ubiquitous in $\Omega$ relative to $\rho$. Since $\Psi$ is decreasing and $g$ is increasing, by Cauchy condensations test, the divergence of the sum in \eqref{vb90+} implies that
$$
\sum_{t=1}^\infty 2^{t(n+1)} g\Big(\frac{\Psi(2^t)}{2^t}\Big)=\infty\,.
$$
In view of the definition of $\rho$ and $\Phi$ above, this further implies the divergence sum condition of \eqref{e:019}. Hence, by Lemma~\ref{l:01}, we have that
\begin{equation}\label{vb91
}
\cH^g\big(\Lambda_\cR(\Phi)\big) = \cH^g(\Omega)\,.
\end{equation}
To conclude \eqref{vb90+} it remains to note that
\begin{equation}\label{vb93}
\Lambda_\cR(\Phi)\subset \cL(\cF,\Psi)\cap\Omega\,.
\end{equation}
Indeed, if $\vv x=(x_1,\dots,x_m)\in \Lambda_\cR(\Phi)$ then there are infinitely many
$(F,\gamma_1,\dots,\gamma_m)$, where $F\in\cF$ and $F_1(\gamma_1)=\dots=F_m(\gamma_m)=0$, such that
$$
\max_{1\le j\le m}|x_j-\gamma_j|<\Phi(\delta_0^{-1}H(F))=\frac{\Psi(\delta_0^{-1}H(F))}{\delta_0^{-1}H(F)}<
\frac{\delta_0\Psi(H(F))}{H(F)}\,.
$$
For each $j$ using the Mean Value Theorem and \eqref{M} we obtain from the above that
$$
|F_j(x_j)|=|F_j(x_j)-F_j(\gamma_j)|=|F'_j(\tilde x_j)(x_j-\gamma_j)|<
(n+1)MH(F)\frac{\delta_0\Psi(H(F))}{H(F)}=\Psi(H(F))\,.
$$
Therefore, \eqref{eq40} is satisfied for infinitely many $F\in\cF$, thus implying that $\vv x\in \cL(\cF,\Psi)\cap\Omega$. This establishes \eqref{vb93} and completes the proof.

\medskip

\subsection{Proof of Theorem~\ref{t2+}: the general case}

The general case will be reduced to that of \S\ref{onedim}. We will therefore need two auxiliary statements.
The first one appears as the Fibering Lemma in \cite{BerInvent}\,:

\begin{lemma}[Fibering Lemma]\label{FL}
Let $f_0,\dots,f_n$ be analytic functions in $k$ real variables defined on an open neighborhood of\/ $\vv0$. Assume that $f_0,\dots,f_n$ are linearly independent over $\R$. Then there is a sufficiently large integer $D_0>1$ such that for every $D\ge D_0$ and every $\vv u=(u_1,u_2,\dots,u_k)\in \R^{k}$ with $u_1\cdots u_k\neq0$ the following functions of one real variable
  $$
  \phi_{\vv u,i}:E_{\vv u}\to\R\quad (0\le i\le n)
  $$
  given by
  $$
  \phi_{\vv u,i}(t)\stackrel{\rm def}{=} f_i(u_1t^{1+D^k},u_2t^{D+D^k},\dots,u_kt^{D^{k-1}+D^k})\,,
  $$
where $E_{\vv u}\subset\R$ is a neighbourhood of\/ $\vv0$, are linearly independent over $\R$.
\end{lemma}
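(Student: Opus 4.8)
## Proof proposal for the Fibering Lemma

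The plan is to reduce the statement to a known result about analytic functions of several variables becoming linearly independent along curves, and then to verify that the specific monomial curve $t\mapsto(u_1t^{1+D^k},\dots,u_kt^{D^{k-1}+D^k})$ does the job for all sufficiently large $D$. First I would recall the elementary fact that a finite collection of analytic functions $f_0,\dots,f_n$ on a connected domain is linearly dependent over $\R$ if and only if it is linearly dependent over $\C$, and if and only if their generalized Wronskian (with respect to any fixed coordinate system) vanishes identically. So the task is to exhibit a single analytic curve $\gamma_{\vv u}:E_{\vv u}\to\R^k$ through $\vv 0$ along which the pullbacks $\phi_{\vv u,i}=f_i\circ\gamma_{\vv u}$ remain linearly independent, i.e. their one-variable Wronskian $W(\phi_{\vv u,0},\dots,\phi_{\vv u,n})(t)$ is not identically zero.

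The key idea is to pass to power series. Since each $f_i$ is analytic near $\vv 0$, write $f_i(\vv y)=\sum_{\bm\nu}c_{i,\bm\nu}\vv y^{\bm\nu}$. Substituting $\vv y=(u_1t^{1+D^k},\dots,u_kt^{D^{k-1}+D^k})$ gives a power series in $t$ whose coefficients are polynomials in $\vv u$; grouping terms, the exponent of $t$ attached to a multi-index $\bm\nu=(\nu_1,\dots,\nu_k)$ is
$$
e_D(\bm\nu)=\nu_1(1+D^k)+\nu_2(D+D^k)+\dots+\nu_k(D^{k-1}+D^k)=|\bm\nu|D^k+\sum_{r=1}^k\nu_r D^{r-1}.
$$
The crucial combinatorial observation is that for $D$ large enough (larger than the total degree of any monomial we need to track, which is controlled because linear dependence is detected at a bounded order — see below), the map $\bm\nu\mapsto e_D(\bm\nu)$ is \emph{injective} on the relevant finite set of multi-indices: the top term $|\bm\nu|D^k$ separates indices of different total degree, and among indices of a fixed total degree the base-$D$ expansion $\sum\nu_r D^{r-1}$ separates them once $D$ exceeds that fixed degree. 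Injectivity means distinct monomials of the $f_i$ land on distinct powers of $t$, so the substitution map on (truncated) power series is injective, whence a nontrivial $\R$-linear relation among $\phi_{\vv u,0},\dots,\phi_{\vv u,n}$ would force the same relation among $f_0,\dots,f_n$ — contradiction — provided $u_1\cdots u_k\neq0$ so that no coefficient is accidentally killed.

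The one point requiring care, and the main obstacle, is making precise ``the relevant finite set of multi-indices'' so that a single $D_0$ works uniformly: a priori a linear relation could involve arbitrarily high-order terms. The way around this is standard: if $\phi_{\vv u,0},\dots,\phi_{\vv u,n}$ were linearly dependent, the dependence is witnessed by the vanishing of the $(n+1)\times(n+1)$ Wronskian, an analytic function of $t$ whose own Taylor coefficients are (by Leibniz/Faà di Bruno) universal polynomial expressions in finitely many Taylor coefficients of the $f_i$ at $\vv 0$ — only those of order at most some $N=N(n,k)$. One then chooses $D_0$ larger than $N$ (equivalently, larger than the largest total degree $|\bm\nu|\le N$ that can occur), which is exactly what guarantees injectivity of $e_D$ on $\{\bm\nu:|\bm\nu|\le N\}$ for all $D\ge D_0$. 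For such $D$, the lowest-order nonvanishing jet of the Wronskian of the $\phi_{\vv u,i}$ is (up to a nonzero monomial in $\vv u$) identified with the corresponding lowest-order jet of the Wronskian of the $f_i$, which is nonzero since $f_0,\dots,f_n$ are linearly independent. Hence $W(\phi_{\vv u,0},\dots,\phi_{\vv u,n})\not\equiv0$ on $E_{\vv u}$, giving linear independence over $\R$. Finally I would note that $E_{\vv u}$ can be taken as any interval around $0$ small enough that $\gamma_{\vv u}(E_{\vv u})$ stays inside the common domain of analyticity of the $f_i$, which is possible for each fixed $\vv u$. Since the argument quoted this statement from \cite{BerInvent}, a full write-up would simply cite that reference, but the sketch above is the mechanism behind it.
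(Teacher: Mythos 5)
The paper does not prove this lemma at all; it is quoted verbatim from \cite{BerInvent}, so there is no internal proof to compare against. Your overall mechanism -- substituting the monomial curve, computing the exponent map $e_D(\bm\nu)=|\bm\nu|D^k+\sum_{r=1}^k\nu_rD^{r-1}$, and using its injectivity on a finite set of multi-indices to transfer a linear relation from the $\phi_{\vv u,i}$ back to the $f_i$ -- is the right one and is essentially the argument of the cited source. However, the step you yourself flag as "the one point requiring care" is handled incorrectly. You claim that linear dependence of the $\phi_{\vv u,i}$ is detected by Taylor coefficients of the $f_i$ of order at most some \emph{universal} $N=N(n,k)$, via the Wronskian. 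No such universal bound exists: already for $n=1$, $k=1$, the pair $f_0=1$, $f_1=x^M$ is linearly independent, but its independence (and the non-vanishing of the relevant Wronskian) is invisible at any jet order below $M$. Each Taylor coefficient of the Wronskian of the $\phi_{\vv u,i}$ is indeed a polynomial in finitely many Taylor coefficients of the $f_i$, but \emph{which} coefficient is the first nonzero one depends on the $f_i$, not only on $n$ and $k$; and the asserted identification of the lowest-order jet of the one-variable Wronskian along the curve with a jet of "the Wronskian of the $f_i$" (a generalized, several-variable Wronskian) is not justified.

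The gap is repairable, and the repair makes the Wronskian detour unnecessary. Since $f_0,\dots,f_n$ are linearly independent, the subspaces $L_N\subset\R^{n+1}$ of linear relations satisfied by their degree-$N$ Taylor polynomials at $\vv 0$ form a decreasing chain of subspaces of a finite-dimensional space with $\bigcap_N L_N=\{0\}$; hence $L_N=\{0\}$ for some finite $N$ \emph{depending on the $f_i$}. Take $D_0>N+1$. For $D\ge D_0$ the map $e_D$ is injective on $\{|\bm\nu|\le N\}$ (your base-$D$ digit argument), and moreover $\max_{|\bm\nu|\le N}e_D(\bm\nu)\le ND^k+ND^{k-1}<(N+1)D^k\le\min_{|\bm\mu|\ge N+1}e_D(\bm\mu)$, so low-degree monomials do not collide with high-degree ones either -- a separation your sketch omits but which is needed to read off coefficients. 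Then the coefficient of $t^{e_D(\bm\nu)}$ in $\sum_i\lambda_i\phi_{\vv u,i}$ equals $\left(\sum_i\lambda_ic_{i,\bm\nu}\right)\vv u^{\bm\nu}$, and since $u_1\cdots u_k\neq0$ a nontrivial relation among the $\phi_{\vv u,i}$ forces a nontrivial relation among the degree-$N$ truncations of the $f_i$, a contradiction. Note that this yields $D_0$ depending on the functions $f_i$ (which is all the lemma asserts), not a $D_0$ depending only on $n$ and $k$ as your write-up suggests.
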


The second auxiliary statement, the so-called `Slicing lemma', is a version of Fubini's theorem for Hausdorff measure and appears as Lemma~4 in \cite{BVSlicing}\,:

\begin{lemma}[Slicing lemma]\label{slicing2}
Let $\, l,k\in\N$ such that $l \leq k $ and $f$ and $\tilde f:r\mapsto
r^{-l} f(r)$ be dimension functions. Let $A\subset\R^k$ be a Borel
set and $V$ be an $(k-l)$-dimensional linear subspace of $\R^k$.
If for a subset $S$ of $V^\perp$ of positive $\cH^{l}$-measure
$$
\cH^{\tilde f}(A\cap(V+b))=\infty\text{ \ \ \ \  for all \ \  }b\in S \,
,
$$
then $\cH^f(A)=\infty$.
\end{lemma}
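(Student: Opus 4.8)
The plan is to derive Lemma~\ref{slicing2} from a Fubini-type inequality for Hausdorff measures. Precisely, writing $\pi:\R^{k}\to V^\perp$ for the orthogonal projection and $\cH^{l}$ for the $l$-dimensional Hausdorff measure on the $l$-dimensional space $V^\perp$, I would first prove that there is a constant $c=c(l,k)>0$ with
\begin{equation}\label{slicefub}
\int_{V^\perp}^{*}\cH^{\tilde f}\big(A\cap(V+b)\big)\,d\cH^{l}(b)\ \le\ c\,\cH^{f}(A)\,,
\end{equation}
where $\int^{*}$ is the upper integral. Granting \eqref{slicefub}, the lemma is immediate: if we had $\cH^{f}(A)<\infty$, the right-hand side of \eqref{slicefub} would be finite, so there would be a $\cH^{l}$-measurable $h\ge\cH^{\tilde f}(A\cap(V+\cdot))$ with $\int_{V^\perp}h\,d\cH^{l}<\infty$; any such $h$ is finite $\cH^{l}$-almost everywhere, so $\cH^{\tilde f}(A\cap(V+b))<\infty$ for $\cH^{l}$-almost every $b\in V^\perp$. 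As $\cH^{l}(S)>0$, this would force the existence of $b\in S$ with $\cH^{\tilde f}(A\cap(V+b))<\infty$, contradicting the hypothesis. Hence $\cH^{f}(A)=\infty$.

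To establish \eqref{slicefub} I would argue directly from the definition of $\cH^{f}$ via $\delta$-covers. Fix $\delta>0$ and $\ve>0$ and pick a countable cover $\{U_i\}$ of $A$ with $\operatorname{diam}U_i\le\delta$ and $\sum_i f(\operatorname{diam}U_i)\le\cH^{f}(A)+\ve$; replacing each $U_i$ by its closure changes neither the diameters nor the covering property, so each $U_i$, and hence each $\pi(U_i)\subset V^\perp$, may be taken compact. The elementary observations are: $(V+b)\cap U_i\neq\emptyset$ exactly when $b\in\pi(U_i)$; for each $b$ the sets $U_i\cap(V+b)$ with $b\in\pi(U_i)$ form a $\delta$-cover of $A\cap(V+b)$ inside the affine plane $V+b$; and, since $\pi$ is $1$-Lipschitz, $\pi(U_i)$ lies in an $l$-dimensional ball of radius $\operatorname{diam}U_i$, so $\cH^{l}(\pi(U_i))\le c\,(\operatorname{diam}U_i)^{l}$. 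Therefore
\begin{equation}\label{slicestep}
\cH^{\tilde f}_{\delta}\big(A\cap(V+b)\big)\ \le\ \sum_{i\,:\,b\in\pi(U_i)}\tilde f(\operatorname{diam}U_i)\,,
\end{equation}
and integrating \eqref{slicestep} over $b\in V^\perp$ against $\cH^{l}$, interchanging sum and integral by Tonelli, and using the identity $r^{l}\tilde f(r)=f(r)$ gives
\begin{equation}\label{slicebd}
\int_{V^\perp}^{*}\cH^{\tilde f}_{\delta}\big(A\cap(V+b)\big)\,d\cH^{l}(b)\ \le\ c\sum_i(\operatorname{diam}U_i)^{l}\,\tilde f(\operatorname{diam}U_i)\ =\ c\sum_i f(\operatorname{diam}U_i)\ \le\ c\big(\cH^{f}(A)+\ve\big)\,.
\end{equation}
Letting $\ve\to0$ in \eqref{slicebd} and then $\delta\to0^{+}$, invoking the monotone convergence theorem for the upper integral (the Hausdorff contents $\cH^{\tilde f}_{\delta}$ increase to $\cH^{\tilde f}$), yields \eqref{slicefub}.

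The argument involves no genuine analytic difficulty; the step needing care is the measure-theoretic bookkeeping, which I would handle by passing to closures of the covering sets (making all relevant projections compact, hence measurable), working with upper integrals throughout, and applying Tonelli and the monotone convergence theorem for upper integrals. The single structural ingredient that makes everything fit is the relation $\tilde f(r)=r^{-l}f(r)$: it is exactly what turns the factor $(\operatorname{diam}U_i)^{l}$, arising from projection onto the $l$-dimensional space $V^\perp$, back into $f(\operatorname{diam}U_i)$ in \eqref{slicebd}; the standing hypothesis that both $f$ and $\tilde f$ are dimension functions guarantees that $\cH^{f}$ and $\cH^{\tilde f}$ are honest Borel measures and that their contents are monotone in $\delta$.
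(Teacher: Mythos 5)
Your argument is correct. Note that the paper does not prove this statement at all: it imports it verbatim as Lemma~4 of the cited slicing paper of Beresnevich and Velani, where it is obtained by exactly the route you take, namely the Eilenberg/Fubini-type projection inequality $\int_{V^\perp}^{*}\cH^{\tilde f}(A\cap(V+b))\,d\cH^{l}(b)\le c\,\cH^{f}(A)$ derived from $\delta$-covers, with the identity $r^{l}\tilde f(r)=f(r)$ converting the projected $l$-dimensional content back into $f(\operatorname{diam}U_i)$. The only points worth making explicit in a write-up are the ones you already flag: monotonicity of $\tilde f$ (guaranteed since it is assumed to be a dimension function) is what lets you bound $\tilde f(\operatorname{diam}(U_i\cap(V+b)))$ by $\tilde f(\operatorname{diam}U_i)$, and the passage $\delta\to0^{+}$ uses monotone convergence for upper integrals with respect to $\cH^{l}$ on $V^\perp$, which is legitimate because $\cH^{l}$ there is a constant multiple of Lebesgue measure and the approximating measures $\cH^{\tilde f}_{\delta}$ increase as $\delta$ decreases.
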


\bigskip

Now we are fully equipped to complete the proof of Theorem~\ref{t2+}, which will be done by induction on $d$. The case of $d=m$ is considered in \S\ref{onedim}. Hence we assume that some of $d_j$ are strictly bigger than $1$. Without loss of generality we will assume that $d_1>1$. Without loss of generality we will assume that each $U_j$ is centred at $\vv 0$. Next for each $1\le j\le m$ set $u_{j,1}=1$ and consider the following open domains $W_j$ in $\R^{d_j}$ of $(t_j,u_{j,2},\dots,u_{j,d_j})$ given by
\begin{equation}\label{vb95}
u_{j,2}\cdots u_{j,d_j}\neq0
\end{equation}
$$
(t_j^{1+D^{d_j}},u_{j,2}\,t_j^{D+D^{d_j}},\dots,u_{j,d_j}\,t_j^{D^{{d_j}-1}+D^{d_j}})\in U_j^\circ\,,
$$
where $U_j^\circ$ is the interior of $U_j$.
It is readily seen that the map
\begin{equation}\label{vb94}
(t_j,u_{j,2},\dots,u_{d_j}^j)\mapsto(t_j^{1+D^{d_j}},u_{j,2}\,t_j^{D+D^{d_j}},\dots,u_{j,d_j}^j\, t_j^{D^{{d_j}-1}+D^{d_j}})
\end{equation}
is a bijection between the open set $W_j$ and
$$
U_j^*:=\{(x_1,\dots,x_{d_j})\in U_j^\circ:x_2\dots x_{d_j}\neq0\text{ if }x_1\neq0\}.
$$
Also, clearly $U_j^*$ is of the same Lebesgue measure as $U_j$.
Using the change of variables \eqref{vb94}, define the map
\begin{equation}\label{fu}
\tilde{\vv f}_j(t_j,u_{j,2},\dots,u_{j,d_j})=\vv f_j\big(t_j^{1+D^{d_j}},u_{j,2}\,t_j^{D+D^{d_j}},\dots,u_{j,d_j}^j\,t_j^{D^{{d_j}-1}+D^{d_j}}\big)\,,
\end{equation}
where $(t_j,u_{j,2},\dots,u_{j,d_j})\in W_j$. By Lemma~\ref{FL}, for every $\vv u_j=(u_{j,2},\dots,u_{j,d_j})$ subject to \eqref{vb95}
we have that $\tilde{\vv f}_{j,\vv u_j}=\tilde{\vv f}_j(t_j,u_{j,2},\dots,u_{j,d_j})$ as a function of $t_j$ is non-degenerate (see Remark~\ref{rem3} for the definition), where $t_j$ now lies in some interval $\tilde U_j$.
Let $\tilde U=\tilde U_1\times \dots\times \tilde U_m$ and $\tilde\cF$ is defined the same way as $\cF$ but with each $\vv f_j$ replaced by $\tilde{\vv f}_{j,\vv u_j}$. Then, by our induction assumption, we have that
\begin{equation}\label{vb90x}
  \cH^{\tilde g}(\cL(\tilde\cF,\Psi))=\cH^{\tilde g}(\tilde U)\,.
\end{equation}
Then, using either Fubini's theorem (in the case of $\lim_{r\to 0^+}r^{-m}\tilde g(r)<\infty$) or Lemma~\ref{slicing2} (in the case of $\lim_{r\to 0^+}r^{-m}\tilde g(r)=\infty$) completes the proof of Theorem~\ref{t2+} in the general case.

\

\

%
%
%
%
%
%
%
%
%
%
%
%
%
%

\bigskip
\bigskip

\noindent{\it Acknowledgements.} Part of this work was done during authors' visits to the University of Bielefeld supported by CRC 701 and to the University of York supported by EPSRC grant EP/J018260/1. The third author is also grateful to Maynooth University for their hospitality and providing encouraging environment for working on this project.

\bigskip



{\small
\def\cprime{$'$}
\def\polhk#1{\setbox0=\hbox{#1}{\ooalign{\hidewidth\lower1.5ex\hbox{`}\hidewidth\crcr\unhbox0}}}

}

\medskip
\medskip
\medskip

{\footnotesize

\begin{minipage}{0.9\textwidth}
\footnotesize V. Beresnevich\\
Department of Mathematics, University of York, Heslington, York, YO10 5DD, England\\
{\it E-mail address}\,:~~ \verb|victor.beresnevich@york.ac.uk|\\
\end{minipage}

\begin{minipage}{0.9\textwidth}
\footnotesize V. Bernik\\
Institute of Mathematics, Surganova 11, Minsk, 220072, Belarus\\
{\it E-mail address}\,:~~ \verb|bernik.vasili@mail.ru|\\
\end{minipage}

\begin{minipage}{0.9\textwidth}
\footnotesize N. Budarina\\
Institute for Applied Mathematics, Khabarovsk Division, Far-Eastern Branch of the Russian Academy of Sciences, Dzerzhinsky st. 54, Khabarovsk, 680000, Russia\\[0.5ex]
{\it E-mail address}\,:~~ \verb|buda77@mail.ru|
\end{minipage}

}

\end{document}